\title[Monodromies of surfaces, right-veeringness, and 
primeness of links]
{Monodromies of surfaces in 3-manifolds, right-veeringness, and %
primeness of links}
\author{Peter Feller}
\author{Lukas Lewark}
\author{Miguel {Orbegozo Rodriguez}}
\address{Université de Neuchâtel, %
Rue Emile-Argand 11, 2000 Neuchâtel, Switzerland}
\email{peter.feller@unine.ch}
\urladdr{\url{https://www.unine.ch/math/en/pfeller/}}
\address{Department of Mathematics, ETH Zurich, 8092 Zurich, Switzerland}
\email{lukas.lewark@math.ethz.ch}
\urladdr{\url{https://people.math.ethz.ch/~llewark/}}
\address{Université de Neuchâtel, %
Rue Emile-Argand 11, 2000 Neuchâtel, Switzerland}
\email{miguel.orbegozorodriguez@gmail.com}
\urladdr{\url{https://sites.google.com/view/miguel-orbegozo-rodriguez/home}}
\let\cref\Cref
\Crefname{enumi}{}{}
\theoremstyle{plain}
\newtheorem{theorem}{Theorem}[section]
\theoremstyle{definition}
\newtheorem{definition}[theorem]{Definition}
\theoremstyle{plain}
\newtheorem{lemma}[theorem]{Lemma}
\newtheorem{claim}[theorem]{Claim}
\theoremstyle{definition}
\newtheorem{example}[theorem]{Example}
\newenvironment{Example}{\begin{example}\rm}{\end{example}}
\theoremstyle{plain}
\newtheorem{proposition}[theorem]{Proposition}
\newtheorem{corollary}[theorem]{Corollary}
\newtheorem*{composition}{Composition formula}
\newtheorem*{characterization}{Characterization}
\newtheorem*{vprime}{Visual primeness}
\theoremstyle{remark}
\newtheorem{remark}[theorem]{Remark}
\newcommand{\qua}{\hskip 0.4em \ignorespaces}
\def\arxiv#1{\relax\ifhmode\unskip\qua\fi
\href{http://arxiv.org/abs/#1}%
{\tt arXiv:\penalty -100\unskip#1}}
\def\MR#1{\relax\ifhmode\unskip\qua\fi
\href{https://mathscinet.ams.org/mathscinet-getitem?mr=#1}{\tt MR#1}}
\def\ZB#1{\relax\ifhmode\unskip\qua\fi
\href{https://zbmath.org/?q=an:#1}{\tt Zbl\:#1}}
\def\xox#1{\csname xx#1\endcsname}
\newcommand{\mor}[1]{{\color{red}#1}} %
\newcommand{\A}{\mathcal{A}}
\newcommand{\R}{\mathbb{R}}
\newcommand{\Z}{\mathbb{Z}}
\begin{document}

\begin{abstract}
Extending the notion of monodromies associated with open books of $3$-manifolds, we consider monodromies for all incompressible surfaces in $3$-manifolds as partial self-maps of the arc set of the surfaces.
We use them to develop a primeness criterion for incompressible surfaces constructed as iterative Murasugi sums in irreducible $3$\nobreakdash-manifolds.

We also consider a suitable notion of right-veeringness for monodromies of incompressible surfaces.  We show strongly quasipositive surfaces are right-veering, thereby generalizing the corresponding result for open books and providing a proof that does not draw on contact geometry.
In fact, we characterize when all elements of a family of incompressible surfaces that is closed under positive stabilization are right-veering. The latter also offers a new perspective on the characterization of tight contact structures via right-veeringness as first established by Honda, Kazez, and Mati\'c. 

As an application to links in~$S^3$, we prove visual primeness of a large class of links, the so-called alternative links.
This subsumes all prior visual primeness results related to Cromwell's conjecture. The application is enabled by the fact that all links in $S^3$ arise as the boundary of incompressible surfaces, whereas classical open book theory is restricted to fibered links---those links that arise as the boundary of the page of an open book.
\end{abstract}

\maketitle

\section{Introduction}

We study links $L$ in a $3$-manifold $M$ via incompressible surfaces with boundary~$L$. For simplicity of exposition, we focus on $M=S^3$ in the introduction. If $L$ is the binding of an open book (in other words, $L$ is a fibered link),
the theory of open books provides an extensive toolbox for studying properties of $L$; e.g., behavior under Murasugi sums~\cite{Stallings, Gabai, Gabaiother}, hyperbolicity \cite{Thurston}, and homotopy ribbonness \cite{Casson-Gordon}.
Questions about $L$ can be phrased in terms of the monodromy~$\varphi$ of the open book; concretely, its effect on isotopy classes of properly embedded arcs.
In this paper we associate to every incompressible surface $\Sigma$ a monodromy (which takes the form of a partially defined self-map on the arc graph of $\Sigma$), and we use this monodromy to study~$L$, similarly as in the fibered case. In particular, we investigate the behavior of these monodromies under the operation of Murasugi sums, and we study so-called strongly quasipositive surfaces (a notion motivated by contact topology and complex algebraic plane curves) and the properties of their associated monodromies. A showcase application will be to the question of visual primeness of links around Cromwell's conjecture~\cite{Cromwell_93}.

\subsubsection*{Murasugi summing}
All fiber surfaces in $S^3$ arise from the trivial open book (the unknot with a disk as Seifert surface) by iterative plumbing and deplumbing of Hopf bands. These operations, also known as stabilization and destabilization, respectively, are generalized by what is called a Murasugi sum of surfaces.
Given two surfaces $\Sigma_1\subset M_1$ and $\Sigma_2\subset M_2$ (and a summing region, i.e.~a polygon embedded in each surface), their Murasugi sum is a new surface $\Sigma\subset M_1\#M_2$; see~\Cref{sec:Murasugisum} for details.
The Murasugi sum $\Sigma$ is a fiber surface if and only if $\Sigma_1$ and~$\Sigma_2$ are fiber surface, and, when this is the case, the monodromy $\varphi$ is the composition of the monodromies of the original fiber surfaces ~\cite{Stallings, Gabai, Gabaiother}: $\varphi=\Phi_1\circ \Phi_2$, where $\Phi_i$ denotes the natural extension by the identity of the monodromy of $\Sigma_i$ to a diffeomorphism of~$\Sigma$. This description of the monodromy in terms of the monodromy of the Murasugi summands is a fundamental tool in describing and understanding fibered links.

Our key result concerning Murasugi sums is the version of this composition formula for the monodromy of any Murasugi sum of incompressible surfaces. In particular, we define the \emph{(partial) monodromy $\phi$} of an incompressible surface in a 3--manifold $M$. Moreover, much like in the fibered case, given a Murasugi sum $\Sigma \subset M_1 \# M_2$ of $\Sigma_1 \subset M_1$ and $\Sigma_2 \subset M_2$, an ``extension by the identity to $\Sigma$" $\Phi_i$ of $\phi_i$ can also be defined.

\begin{composition}[\Cref{lemma:diskcomposition} and \Cref{rem:diskcomp}]
    Let $\Sigma_1\subset M_1$ and $\Sigma_2 \subset M_2$ be incompressible surfaces, and $\Sigma \subset M_1 \# M_2$ a Murasugi sum. Then, the monodromy of $\Sigma$ is given by $\phi = \Phi_1 \circ \Phi_2.$
\end{composition}

This allows us to prove a primeness criterion in \Cref{crit}, similar to the one in \cite{FLOR} for open books. In turn, we use this criterion to establish, in \Cref{thm:treeofveeringsurfaces}, primeness of a large class of surfaces, given by tree-guided Murasugi sums of right- and left-veering surfaces; see \Cref{sec:trees} for definitions and a precise statement of this primeness criterion. While we do not provide the exact statement here, at the end of the introduction we discuss the application that was our original motivation: visual primeness of a large class of link diagrams as predicted by Cromwell's conjecture.

\subsubsection*{Right-veeringness}

When considering monodromies of fiber surfaces, one of the basic properties to study is the notion of \emph{right-veeringness} %
(see \Cref{sec:Prelims} for a definition). A result of Hedden~\cite{Hedden} for fibered strongly quasipositive links in~$S^3$, together with the characterization of tight contact structures in terms of right-veering open books by Honda, Kazez, and Mati\'{c}~\cite{HKM}, shows that fiber surfaces of strongly quasipositive links in $S^3$ are right-veering. 

In our setup, for incompressible surfaces that are not necessarily fiber surfaces, we discuss an appropriate notion of right-veeringness (originally considered in~\cite{HKMPartial}). %
Using the fact that strong quasipositivity is preserved by positive Hopf plumbing, we are able to prove the analogous result (see \Cref{prop:sqp=>rv}):
strongly quasipositive surfaces in $S^3$ are right-veering.

Moreover, we note that this reproves the result in the fibered case without the use of contact geometry.
In fact, \Cref{prop:sqp=>rv} is a consequence of the main theorem concerning right-veeringness in this article, which we consider to be of independent interest. %
It is a characterization of which families of incompressible surfaces (considered up to isotopy) that are closed under positive Hopf plumbing are right-veering.

\begin{characterization} [%
\cref{thm:rvnesscrit} and~\cref{rmk:soberingarc}]
In a $3$-manifold,
a family of compact incompressible surfaces that is closed under positive Hopf plumbing consists only of right-veering surfaces if and only if no surface in that family has an essential left-veering arc with interior disjoint from the arc's image under the monodromy.
\end{characterization}
As a consequence of this characterization, we provide a different proof of the fact that a contact structure on a $3$-manifold is tight if and only if all its open books are right-veering with minimal reliance on tools from contact geometry.
The latter is in spirit and technique rather similar to an alternative proof provided by~\cite{Wand}; see~\cref{rem:wand}.

\subsubsection*{Visual primeness}
Our main application of our primeness criterion for Murasugi sums (\cref{crit}, \cref{thm:treeofveeringsurfaces})-%
and right-veeringness of strongly quasipositive surfaces in $S^3$ (\cref{prop:sqp=>rv}) is to the question of visual primeness. Cromwell conjectured~\cite{Cromwell_93} that primeness of a link can be read off from a diagram, if applying Seifert's algorithm to the diagram yields a minimal genus Seifert surface. He called this property \emph{visual primeness} (a precise definition will be given in~\cref{sec:alternative}).
We prove a visual primeness result for a large class of links that strictly subsumes the visual primeness results previously established in the literature.

\begin{vprime}[\cref{thm:alternativevp}]
If a non-prime link has an alternative diagram,
then the decomposition of the link as a connected sum is visible in that diagram.
\end{vprime}

We defer the definition of alternative links and diagrams, introduced in~\cite{zbMATH03854015}, to~\cref{sec:alternative}. Rather than focusing on what family of links\footnote{A non-representative poll among low-dimensional topologists indicates that alternative links are a little known class of links, and their definition is what many assume is the definition of homogeneous links.} is treated in \cref{thm:alternativevp}, we think of it as proof of concept that the tools developed in this text are a contender to attack Cromwell's conjecture in full generality.
In particular, \cref{thm:alternativevp} gives a unifying proof for a class of links that strictly contains all previous visual primeness results (\cite{Menasco_84} for alternating links, \cite{Cromwell_93} for positive braids (compare also \cite{zbMATH07504315}), and \cite{zbMATH01807859} for positive links; see also \Cref{cor:poslinksarevisprime}). Finally, the above also recovers the main result of~\cite{FLOR}, as link diagrams stemming from homogeneous braid diagrams are special cases of alternative link diagrams. A crucial difference to that work is that alternative links
form a much vaster class of links that are, in general, not fibered. Hence, the technology developed in this paper for all links (and their incompressible Seifert surfaces), is needed, while in~\cite{FLOR}, we relied on standard open book technology. 
\subsubsection*{Structure of the paper}

In \Cref{sec:Prelims}, we give the definition of monodromy of an incompressible surface as well as definitions of related notions such as fixed and right-veering arcs in an incompressible surface. We also define Murasugi sums.
In \Cref{sec:Murasugisum}, we prove the composition formula (the monodromy of a Murasugi sum of surfaces is the composition of the monodromies of the summands), as well as the primeness criterion \Cref{crit}.
Then, in \Cref{sec:trees}, we use this criterion to establish primeness of tree guided Murasugi sums of right- and left-veering surfaces (\cref{thm:treeofveeringsurfaces}), and of arborescent links in~$S^3$.
In \Cref{sec:sqp}, we prove that strongly quasipositive surfaces in $S^3$ are right-veering; in fact, we characterize right-veeringness of families of surfaces that are closed under positive stabilization, and we use the latter to give a different proof of Honda, Kazez, and Mati\'c's celebrated characterization of tightness of a contact structure.
Finally, in \Cref{sec:alternative}, we use the tools from \Cref{sec:trees,sec:sqp} to prove that alternative links are visually prime.

\subsubsection*{Acknowledgments} MOR acknowledges support by the DFG grant~513007277.

\section{Preliminaries on incompressible surfaces and their %
monodromies} \label{sec:Prelims}
All manifolds are smooth, oriented and are allowed to have boundary, unless specified otherwise. For a manifold~$M$, we denote by $\overline{M}$ the same manifold with the opposite orientation. Some of our constructions will create manifolds with corners, in which case we will implicitly smooth the corners whenever necessary.

The goal of this section is to build up the language of partial monodromies %
of incompressible surfaces in 3-manifolds as we need it. This is all aimed to mirror the case of how monodromies (of open books) relate to fiber surfaces in 3-manifolds.
Rather than thinking of partial monodromies as diffeomorphisms with domain and target some subsurface (an approach developed elsewhere under the name ``partial open books''~\cite{HKMPartial,EtguOzbaugci}; see \cref{rmk:phitopartialopenbook} for the connection), we focus on isotopy classes of arcs and how they are mapped. 

\subsection{Monodromies as partial self-maps of the arc set}

We let the \emph{arc set} of~$\Sigma$, denoted by~$\mathcal{A}(\Sigma)$, be the set of isotopy classes of \emph{arcs}---proper smooth embeddings~$[0,1]\to\Sigma$, which we readily conflate with their images, remembering orientation---in a surface with boundary~$\Sigma$, where isotopies fix boundary pointwise. (Note that, crucially, isotopies are fixed on the boundary, hence our arc sets are typically uncountable.\footnote{A so inclined reader might want to avoid this, e.g.~by choosing to pick a marked point on each boundary component and ask for arcs to start and end at such marked points. This approach leads to an equally strong setup. %
})

If $\Sigma'\subset \Sigma$ is a subsurface, we consider the restriction, also known as the subsurface projection, of (isotopy classes of) arcs in $\Sigma$ to $\Sigma'$: for $a\in \mathcal{A}(\Sigma)$,
\[a\cap \Sigma'\coloneqq \left\{[\delta]\in \mathcal{A}(\Sigma')\ \middle\vert
\begin{array}{l}\text{$\gamma$ is a representative of $a$ that intersects $\partial \Sigma'$ minimally}\\
\text{and $\delta$ is a connected component of $\gamma\cap \Sigma'$}
\end{array}\right\}.\]

For an incompressible surface $\Sigma$ in a $3$-manifold~$M$, we denote by $\phi(a)\in \mathcal{A}(\Sigma)$ the isotopy class of arcs obtained by ``bringing $a$ to the other side (by a product disk)'', if it exists.
We use the convention that $a$ is on the positive side of~$\Sigma$, while $\phi(a)$ is on the negative side.
By convention, writing $\phi(a)$ in particular means that $a$ goes to the other side.

We make this precise using the language of product disks as introduced by Gabai~\cite{Gabai_FoliationsII}. However, we give a definition of a product disk without relying on the sutured manifold terminology.
Of course, it is natural to phrase things in the language of sutured manifolds. Indeed, in that language~\cite{Gabai_FoliationsI,Gabai_FoliationsII}, given a balanced sutured manifold $(M,\gamma)$---such as the exterior of a compact incompressible surface without closed components in a $3$-manifold with its canonical sutures---$\phi$ can be understood as a partial map $\phi\colon \mathcal{A}(R^+(\gamma))\to \mathcal{A}(R^-(\gamma))$, mapping an arc in $R^+(\gamma)$ to the arc in $R^-(\gamma)$ if their union is the boundary of a product disk.
Still, we avoid this language, as our point is to not actually use product disks as decomposition surfaces, but instead to collect the information which arcs feature as the boundary of product disks in the arc set. Moreover, our focus is on surfaces and the operation of Murasugi sums on surfaces. 

A \emph{product disk} is a smooth map $\Delta\colon[0,1]\times[0,1]\to M$ with the following properties. The restriction to $[0,1]\times\{0\}$ and $[0,1]\times\{1\}$ defines proper arcs in~$\Sigma$. The restrictions to $\{0\}\times [0,1]$ and $\{1\}\times [0,1]$ are constant, so in particular the two arcs have the same start and end points. The restriction to $(0,1)\times(0,1)$ is a diffeomorphism onto its image and the derivative at $(t,i)$ for $t\in(0,1)$ and $i\in\{0,1\}$ with respect to the second coordinate yields a tangent vector $v\in T_{\Delta(t,i)}M$ that points positively out of~$\Sigma$. Here, $v\in T_pM$ for $p\in\Sigma$ is said to \emph{point positively out of $\Sigma$}, if for a basis $(b_1,b_2)$ of $T_p\Sigma$ that defines the orientation of $T_p\Sigma$, we have that $(b_1,b_2,v)$ is a basis of $T_pM$ that defines the orientation of~$T_pM$.

\begin{definition}[The (partial) monodromy of $\Sigma$]
Let $\Sigma$ be an incompressible surface in a $3$-manifold~$M$.
We define a partial map\footnote{A partial map $f$ from a set $X$ to a set $Y$ is a function from a subset $\mathrm{domain}(f)\subseteq X$ to~$Y$.} $\phi_\Sigma\colon \A(\Sigma)\to \A(\Sigma)$ called the \emph{partial monodromy} of $\Sigma$ as follows.
The domain of $\phi_\Sigma$ is formed by those $a\in \A(\Sigma)$
for which there exists a product disk $\Delta$ such that \[a=\left[[0,1]\to \Sigma,t\mapsto \Delta(t,0)\right].\]
For such $a$, we define \[\phi_\Sigma(a)\coloneqq \left[[0,1]\to \Sigma, t\mapsto \Delta(t,1)\right].\]
When the surface $\Sigma$ is clear from the context, we simply write $\phi$ instead of $\phi_\Sigma$ and
when the subtlety of whether $\phi$ is defined on all of $\mathcal{A}(\Sigma)$ is not relevant, we refer to $\phi$ as the \emph{monodromy} of~$\Sigma$.
\end{definition}
\begin{remark}
By incompressibility of~$\Sigma$, $\phi_\Sigma$ is well-defined (as a partial map) and in fact yields a bijection from $\mathrm{domain}(\phi_\Sigma)$
to~$\mathrm{image}(\phi_\Sigma)$.
We write $\phi_\Sigma^{-1}\colon \A(\Sigma)\to\A(\Sigma)$ for the partial map given by $\mathrm{domain}(\phi_\Sigma^{-1})\coloneqq \mathrm{image}(\phi_\Sigma)\ni a'=\phi_\Sigma(a)\mapsto a$,
and note that
$\phi_{\Sigma}^{-1}=\phi_{\overline{\Sigma}}$, if we canonically identify $\A(\Sigma)$ with~$\A\left(\overline{\Sigma}\right)$.
\end{remark}

Up to reparametrization of the domain, a product disk is determined by its image in~$M$. We will denote (the images of) product disks by~$D$, which formally are bigons for which both corners have full angle, and denote the images of the arcs obtained by restriction to $[0,1]\times \{0\}$ and $[0,1]\times \{1\}$ by $\partial_0 D$ and~$\partial_1 D$, respectively. As an illustration and to fix conventions, we provide the most basic non-trivial example.

\begin{example}[The positive Hopf band]\label{Ex:HB}\rm
We consider the positive Hopf band
$H$ in~$\R^3\subset S^3$, as depicted in \Cref{fig:Hopfband},
\begin{figure}[ht]
    \centering
    \includegraphics[width=0.4\linewidth]{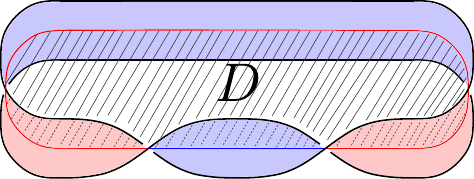}
    \caption{A positive Hopf band $H$ and a product disk $D$, with $\partial_0 D$ in blue and $\partial_1 D$ in red.}
\label{fig:Hopfband}
    \end{figure}
and indicate a product disk $D$
for $H$ for which $\partial_0 D$ and $\partial_1 D$ form cocores of~$H$. In particular, we see that the monodromy $\phi$ is defined for all arcs and is in fact induced (via an orientation preserving parametrization $H \cong S^1\times [0,1]$) by the positive Dehn twist $T\colon  S^1\times [0,1]\to S^1\times [0,1], (z,t)\mapsto (e^{2\pi it}z,t)$.
\end{example}

\begin{example}[Fiber surfaces]\label{ex:fibersurfaces}\rm 
Let $\Sigma$ be a compact surface with non-empty boundary in a closed $3$-manifold~$M$. If $\Sigma$ is a fiber surface (a.k.a.~the page of an open book), then $\phi$ is defined for all~$a \in \A(\Sigma)$. Indeed, $\Sigma$ being a fiber surface means that $M \setminus \nu (\partial \Sigma)$ is a mapping torus $\frac{\Sigma \times [0,1]}{(x,1) \sim (\varphi(x),0)}$ for some diffeomorphism $\varphi$ of $\Sigma$ fixing the boundary pointwise. Thus, for each $a \in \A(\Sigma)$, if $\gamma$ is a representative of $a$, then $\frac{\gamma \times [0,1]}{\sim_{\varphi}}$ is a product disk witnessing $\gamma$ going to $\varphi(\gamma)$, so $\phi(a)$ is the isotopy class of the arc $\varphi(\gamma)$.  
\end{example}

In fact, there is a sort of converse of \cref{ex:fibersurfaces}: if $\phi$ is defined on all of $\mathcal{A}(\Sigma)$, then it is induced by a self-diffeomorphism and, under an irreducibility assumption, $\Sigma$ is a fiber surface. And, if $\phi$ is only a partial map, then it is induced by a partial open book in the flavor of~\cite{HKMPartial}. We explain this in the following remark, without going into details, as this perspective is not the focus here.
\begin{remark} \label{rmk:phitopartialopenbook}
Let $M$ be a closed connected $3$-manifold and let $\Sigma\subset M$ be an incompressible compact connected surface with non-empty boundary. 
Assume that its monodromy $\phi$ is defined on all of~$\A(\Sigma)$; as is e.g.~the case if $\Sigma$ is a fiber surface.
Then, there exists a  diffeomorphism (unique up to isotopy) $\varphi\colon\Sigma\to\Sigma$
that induces $\phi$, i.e.\ $\phi([\alpha])=[\varphi(\alpha)]$ for all arcs $\alpha$ in $\Sigma$. %
Indeed, by taking a cutting system of arcs for $\Sigma$ (disjoint arcs $\alpha_1,\dots,\alpha_l$,
cutting along which turns $\Sigma$ into disks), and defining $\Phi$ on each of them such that $[\varphi(\alpha_i)]=\phi([\alpha_i])$
and extending over the remaining disks, one finds $\varphi$.

Furthermore, if the monodromy $\phi$ is defined on all of~$\A(\Sigma)$, $\Sigma$ is a fiber surface if and only if $M\setminus \Sigma$ is irreducible. (In which case, $\Sigma$ corresponds to the page of the open book with monodromy given by $\varphi$). Indeed, if $M\setminus \Sigma$ is irreducible, we may take a cutting system, consider a neighborhood $N$ of $\Sigma$ with disjoint product disks for all the arcs in the cutting system, note that $\partial N$ consists of spheres and extend to a fibration of $M$ since all components of $M\setminus N$ are balls by the irreducibility assumption. If $M\setminus \Sigma$ is not irreducible, one may take away the arising summands, and the argument applies for the remaining $3$-manifold $\widetilde{M}$ in which $\Sigma$ is now a fiber surface with monodromy $\varphi$, hence the same cannot be true for $M$ (as it is not diffeomorphic to $\widetilde{M}$).

Similarly, if $\phi$ is not defined on all of $\mathcal{A}(\Sigma)$, one finds unique (up to isotopy) maximal (with respect to inclusion and such that each component contains at least one boundary component of $\Sigma$) full compact subsurface $P$ and $\varphi(P)$ of $\Sigma$ and a diffeomorphism $\varphi\colon \Sigma\to \varphi(P)$ that induces $\phi$ (meaning, $\phi(a)$ is defined if $a=[\alpha]$ for an arc $\alpha$ in $\Sigma$ that is contained in $P$ and $\phi([\alpha])=[\varphi(\alpha)]$ for all such $\alpha$s). To find $P$, take a maximal set of pairwise non-parallel essential arcs $\alpha_1,\dots,\alpha_l$ such that $\phi$ is defined on each $[\alpha_i]$, and take $P$ to be a closed neighborhood of
\[\partial \Sigma\cup\bigcup_{i=1}^{l}{\alpha_i}\cup D_1\cup\dots D_k,\] where the $D_i$ are the disk components of $\Sigma\setminus \bigl(\partial P\cup\bigcup_{i=1}^{l}{\alpha_i}\bigr)$. Similarly, $\varphi(P)$ is found with a collection of arcs that are representatives of $\phi[\alpha_i]$ and $\varphi$ is defined as needed to induce $\phi$. For the uniqueness, we note that if we have a collection of arcs $\{a_i\} \subset \A(\Sigma)$ that cut out a disk from $\Sigma$, and $\phi(a_i)$ is defined for all $i$, then $\phi$ is defined for every $a\in\mathcal{A}(\Sigma)$ given by an arc contained in this disk, and indeed completely determined by the $\phi(a_i)$. We will use this in the proof of \Cref{thm:alternativevp}.

The triple $(\Sigma,P,\varphi)$ amounts essentially to a partial open book in the sense of~\cite{HKMPartial} (only ``essentially'' since in~\cite{HKMPartial}, compare also~\cite{EtguOzbaugci}, $P$ amounts to being a neighborhoods of a union of disjoint arcs, while to have uniqueness of $(\Sigma, P,\varphi)$ associated to $\Sigma$, we opt to extend $\varphi$ over disk components of the complement). As seen above, the association of a well-defined $P$ and $\varphi$ with the incompressible surface $\Sigma$ is a bit subtle. So while what we pursue in this article can be phrased in terms of this partially defined diffeomorphism $\varphi$, e.g.~the definition of right-veering given below does indeed correspond to the notion of right-veeringness for partial open books as discussed in~\cite{HKMPartial}, we abstain from further discussing this perspective, as for our purposes, we find the perspective as a partially defined map on the arc set to be the better fit.
\end{remark}

\subsection{Fixed arcs and right-veeringness}

For an arc $a$ on some surface $\Sigma$ that does go to the other side, i.e.~$a\in\mathrm{domain}(\phi)$, we can say whether it is fixed, or right- or left-veering as in the fibered case (which is a case where $\phi$ is defined on all of~$\mathcal{A}(\Sigma)$).
For this, recall that for $p\in\partial\Sigma$ the set $\{a\in\mathcal{A}(\Sigma)\mid a\text{ starts at }p\}$ features a total order given by one arc lying to the right of the other: $a<b$ if $a\neq b$ and transversely and minimally intersecting representatives $\alpha$ and $\beta$ of $a$ and $b$, respectively, are such that the tangent vector of $\beta$ at $p$ is to the right of the tangent vector of $\alpha$. We write $a\leq b$ if $a<b$ or $a=b$.

\begin{definition}
    Let $a\in \A(\Sigma)$ be such that $a$ goes to the other side, i.e. we have $a\in \mathrm{domain}(\phi)\subset\A(\Sigma)$.
    If~$\phi(a)=a$, we say that $a$ is \emph{fixed}.
    If not, we say that $a$ \emph{veers strictly to the right}, if $a<\phi(a)$, and we say that $\alpha$ \emph{veers strictly to the left} if $\phi(a)<a$. We say $a$ \emph{veers to the right} if it is fixed or if it veers strictly to the right, i.e.~if $a\leq \phi(a)$; analogously we define veering to the left.
\end{definition}
\begin{remark}[Fixed arcs, irreducibility, and primeness of links]
    Fixed arcs correspond to product discs that (up to isotopy) have as their image an embedded two-sphere whose intersection with the surface is an arc. Accordingly, we call an incompressible surface $\Sigma$  in a $3$-manifold \emph{irreducible} if every fixed isotopy class of arcs in $\mathcal{A}(\Sigma)$ is boundary parallel. Note that irreducibility only depends on the boundary: for two incompressible surfaces $\Sigma_1$ and $\Sigma_2$ in a $3$-manifold with $\partial \Sigma_1=\partial \Sigma_2$, $\Sigma_1$ is irreducible if and only if $\Sigma_2$ is.

Alternatively, one can consider the notion of \emph{prime} surfaces $\Sigma$, which satisfy that every fixed separating isotopy class of arcs in $\mathcal{A}(\Sigma)$ is boundary parallel. We abstain from doing so, as in $S^3$ (and more generally in $3$-manifolds %
without embedded non-separating two-spheres) the two notions agree.

As mentioned above, in $S^3$ being irreducible (or, equivalently, prime) for an incompressible surface $\Sigma$ is really a property of~$\partial \Sigma$. Indeed, a \emph{link} $L$---a smooth non-empty $1$-submanifold of $S^3$---is prime if and only if one (and hence every) incompressible Seifert surface $\Sigma$ of it is irreducible; compare with the definition of primeness of a link in \cref{sec:alternative}. %
We say this to illustrate our point of view, proposed in the first paragraph of the introduction, of considering properties (here primeness) of links via any choice of incompressible Seifert surface $\Sigma$  and its monodromy, in analogy of how one may consider properties of a fibered link via the monodromy of the corresponding open book.

\end{remark}

\begin{definition}
    If a surface $\Sigma$ in a $3$-manifold $M$ satisfies that for all $a\in \A(\Sigma)$ on which the monodromy $\phi$ is defined, we have $\phi(a) \geq a$, we say that $\Sigma$ is \emph{right-veering}. If furthermore no essential isotopy class of arcs is fixed (i.e. if for every arc $a\in \A(\Sigma)$ for which $\phi$ is defined, we have $\phi(a) > a$ or $a$ has a representative that is contained in $\partial\Sigma$) we say that $\Sigma$ is \emph{strictly right-veering}. The definitions of \emph{left-veering} and \emph{strictly left-veering} are analogous.
    
    More generally, for any surface $\Sigma$ and any partially defined self-map $\phi$ on $\mathcal{A}(\Sigma)$ with the property that the start point of $a$ and $\phi(a)$ are the same for all $a\in\mathrm{domain}(\phi)$, we define $\phi$ to be (strictly) right-veering and (strictly) left-veering in the same way.
\end{definition}

We note that the above definition of a surface $\Sigma\subset M$ being (strictly) right-veering, does in fact correspond to the notion of right-veering introduced for partial open books from~\cite{HKMPartial}, via the construction outlined in~\cref{rmk:phitopartialopenbook}.

\subsection{Connected sums and Murasugi sums}
For $3$-manifolds $M_1$ and $M_3$ with a chosen $3$-ball $B_i^3\subset M_i$ in each of the interiors, we write $M_i^\circ\coloneqq M_i\setminus \mathrm{int}(B_i^3)$. The connected sum of $M_1$ and~$M_2$, denoted by~$M_1\#M_2$, is defined as the result of choosing such balls and an orientation reversing diffeomorphism $F$ identifying their boundaries and using it as a gluing map to build $M_2^\circ\cup_F M_1^\circ$. The diffeomorphism type of the connected sum only depends on the choice of connected component of the balls. In particular, for connected~$M_i$ the diffeomorphism type of the connected sum is well-defined. For what follows, we fix a choice of $3$-balls $B_1^3$ and $B_2^3$ in $M_1$ and~$M_2$, respectively, and an identification $F$ of their boundary that yield an explicit~$M_1\# M_2$. 

\begin{definition}\label{def:murasugissum}
    Let $\Sigma_1$ and $\Sigma_2$ be surfaces in $3$-manifolds $M_1$ and~$M_2$. 
    Let $n\geq1$ be an integer and let $P$ be the regular $2n$-gon whose
sides are circularly labeled $s_1, \dots, s_{2n}$. 
    For~$i = 1,2$, let $f_i$ be an embedding of $P$ into $\Sigma_i$ such that, for~$j = 1, \dots , n$, $f_1(s_{2j-1}) \subset \partial \Sigma_1$, $f_2(s_{2j}) \subset \partial \Sigma_2$, and $f_1(s_{2j})\subset \Sigma_1$ and $f_2(s_{2j-1})\subset \Sigma_2$ are properly embedded arcs in the respective surfaces.
    We identify $P$ with its image in~$\Sigma_1$,~$\Sigma_2$, and~$\Sigma$, and often suppress $f_i$ in the notation.
We assume that $\Sigma_i \cap B^3_i = P \subset \partial B^3_i \subset M_i$.
    Here, we arrange that a positive normal to $P\subset M_1$ points into $B_1^3$ and a positive normal to $P\subset M_2$ points out of~$B_2^3$.
    Furthermore, we arrange that $F$ exactly yields the chosen identification of~$P\subset M_i$, that is, $f_2\circ f_1^{-1}$ is the restriction of $F$ to the polygons.
    
    With this setup, we take $M\coloneqq M_1\# M_2$ and
    $\Sigma$ is the union of the copies of $\Sigma_1$ and $\Sigma_2$ in $M$ given by the canonical inclusions of $M_i^\circ$ into~$M$. In particular, after identifying  $\Sigma_1$ with their copies in~$M$, we have $\Sigma=\Sigma_1\cup \Sigma_2$ and $\Sigma_1\cap\Sigma_2=P$. See \Cref{fig:Murasugisum} for an illustration.
    \begin{figure}[b]
    \centering
    \includegraphics[width=0.9\linewidth]{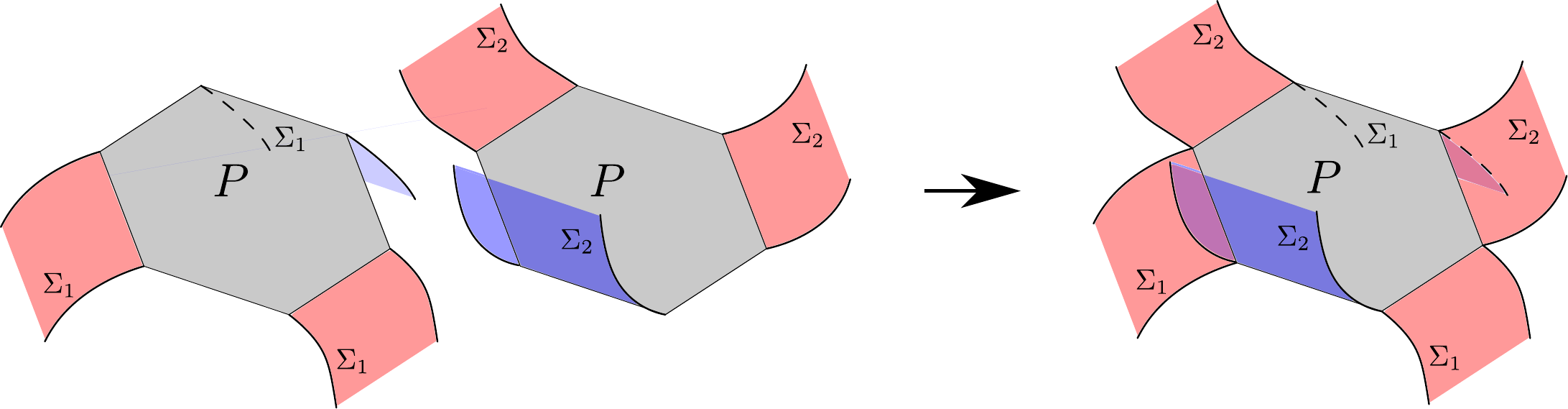}
    \caption{Illustration of the Murasugi sum $\Sigma$ of $\Sigma_1$ and~$\Sigma_2$.\newline
    Left: a neighborhood of $P$ in $M_1$ (left) and $M_2$ (right).
    \newline
    Right: A neighborhood of $P$ in~$M$.}
    \label{fig:Murasugisum}
    \end{figure}
    The pair $(M, \Sigma)$ is called the \emph{Murasugi sum of $(M_1, \Sigma_1)$ and $(M_2, \Sigma_2)$ along the summing region $P$}. For simplicity, we also say that $\Sigma$ is the Murasugi sum of $\Sigma_1$ and~$\Sigma_2$, suppressing the $3$-manifolds in the notation.

    Moreover, if for  $j = 1, \dots, n $, $f_1(s_{2j})\subset \Sigma_1$ and $f_2(s_{2j-1})\subset \Sigma_2$ are essential arcs,
    we say the Murasugi sum (or the summing region) are \emph{essential}. In another abuse of notation, we denote the sides of~$P \subset \Sigma$ by~$s_i$. Using this notation, a Murasugi sum is essential, if all the sides $s_i$ of $P\subset\Sigma$ are essential in~$\Sigma$.
\end{definition}

\section{Monodromies of Murasugi sums} \label{sec:Murasugisum}

The Murasugi sum $\Sigma$ of two surfaces is fibered if and only if both summands $\Sigma_1$ and $\Sigma_2$ are fibered. Furthermore, in this case, $\phi=\Phi_1\circ\Phi_2$, where $\Phi_i$ denotes the extension of the monodromy $\phi_i$ of $\Sigma_i$ to $\Sigma$ ($\supset \Sigma_i$) by the identity; see \cite{Stallings,Gabai, Gabaiother}. 

In the first subsection of this section, we provide a ``partial'' version of these results. That is, we characterize on which $a\in \mathcal{A}(\Sigma)$ the monodromy $\phi$ is defined, and describe the isotopy class of arcs $\phi(a)$ itself (when it is defined).
This characterization depends on the following:
for which arcs in $\mathcal{A}(\Sigma_1)$ $\phi_1$ is defined, for which arcs in $\mathcal{A}(\Sigma_2)$ $\phi_2$ is defined, as well as what the images are under $\phi_1$ and $\phi_2$.
The result can be phrased in complete analogy of the above formula $\phi=\Phi_1\circ\Phi_2$; see \Cref{rem:diskcomp}. 
While we believe this, in particular in the form given in \Cref{rem:diskcomp}, will not be of surprise to experts, we have not found this result or the strategy of proof in the literature. We also consider it the key input in this paper that allows to treat any setup where traditionally the assumption of fiberedness (in place of incompressibility) seems needed.
We will in particular use said formula to understand when an isotopy class of arcs $a$ is fixed by $\phi$: $\phi(a)=a$ if and only if $\Phi_2(a)=\Phi_1^{-1}(a)$.

In a second subsection, we give necessary conditions on the summands of a Murasugi sum that imply that having a fixed arc $a$ (i.e~$\phi(a)=a$) the restriction to the individual Murasugi summands is also fixed, which amounts $\Phi_2(a)=\Phi_1^{-1}(a)=a$. We note that there are many cases of Murasugi sums where this is not so! Consider for example the list of examples of this phenomenon, even in the case of fibered knots and links in $S^3$, provided in~\cite{FLOR}. As a light start to this subsection, we will discuss the case of connected sums (i.e.~when the plumbing region is a bigon), where it is immediate that $\Phi_2(a)=\Phi_1^{-1}(a)=a$, and how this yields a different perspective on the uniqueness of the prime decomposition of links.

Based on our criteria, we will see in \Cref{sec:trees} a general setup (tree-guided Murasugi summing), where fixed arcs in the Murasugi sum, restrict to fixed arcs of all the Murasugi summands, and it is this setup that, in the case of incompressible Seifert surfaces of links in $S^3$, will allow us to establish the visual primeness results described in the introduction.

\subsection{Description of the monodromy of a Murasugi sum in terms of the monodromy of its Murasugi summands}

\begin{lemma} \label{lemma:diskcomposition}Let $\Sigma_1\subset M_1,\Sigma_2\subset M_2$ be incompressible surfaces in $3$-manifolds $M_1$ and $M_2$. Let $\Sigma$ be a Murasugi sum of $\Sigma_1$ and~$\Sigma_2$. Let $a$ and $a'$ be in $\mathcal{A}(\Sigma)$. Then the following are equivalent.
\begin{enumerate}[(I)]
    \item \label{item:A}$\phi_{\Sigma}(a)=a'$.
    \item \label{item:B} There exists $d\in\mathcal{A}(\Sigma)$ such that $\phi_{\Sigma_2}(\Sigma_2\cap a)=d\cap \Sigma_2$ and
    $\phi_{\Sigma_1}^{-1}(\Sigma_1\cap a')=d\cap \Sigma_1$.
    \end{enumerate}

\end{lemma}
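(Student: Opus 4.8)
The plan is to unwind both conditions into statements about product disks in $M = M_1 \# M_2$ and about how such disks interact with the summing sphere $S = \partial B^3_1 \cong \partial B^3_2$, which meets $\Sigma$ exactly in the polygon $P$. First I would fix a product disk $\Delta$ realizing $\phi_\Sigma(a) = a'$, i.e.\ with $\partial_0 \Delta = a$ and $\partial_1 \Delta = a'$, and put $\Delta$ in general position with respect to $S$. Since $S$ is a sphere and $M_i^\circ$ are the two pieces it separates, $\Delta \cap S$ is a collection of arcs and circles in $\Delta$; incompressibility of $\Sigma$ (and irreducibility, via standard innermost-disk/outermost-arc surgery on $\Delta$ using that $S$ bounds balls on both sides) lets me remove the circles and any arcs of $\Delta \cap S$ that have both endpoints on the same one of $\partial_0\Delta, \partial_1\Delta$. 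What remains is a disk $\Delta$ cut by a sequence of parallel arcs of $\Delta \cap S$ into sub-bigons, alternately lying in $M_1^\circ$ and $M_2^\circ$; the intermediate arcs on $S$ between consecutive sub-bigons are arcs in $P$, and — after a further minimal-position argument for $a, a'$ with respect to $\partial P \subset \Sigma$ — these intermediate arcs are exactly the components of $a \cap \Sigma_i$ and $a' \cap \Sigma_i$. Reading the sub-bigons in $M_2^\circ$ as product disks for $\Sigma_2$ and those in $M_1^\circ$ as product disks for $\Sigma_1$ gives: each component of $a \cap \Sigma_2$ is carried by $\phi_{\Sigma_2}$ to a component of (the arc in $P$ that begins the next $M_1$-bigon), and symmetrically each component of $a' \cap \Sigma_1$ is carried by $\phi_{\Sigma_1}$ from the arc in $P$ ending the previous $M_2$-bigon. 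The arc $d \in \mathcal{A}(\Sigma)$ in \eqref{item:B} is then assembled by concatenating these intermediate $P$-arcs with the pieces of $a$ in $\Sigma_1$ and the pieces of $a'$ in $\Sigma_2$ (which pass straight through $S$ untouched) — concretely, $d$ is the arc traced along $\partial_1$ of the $M_1$-bigons and $\partial_0$ of the $M_2$-bigons. This establishes \eqref{item:A} $\Rightarrow$ \eqref{item:B}.

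For the converse \eqref{item:B} $\Rightarrow$ \eqref{item:A}, I would run the construction in reverse: given $d$ with $\phi_{\Sigma_2}(\Sigma_2 \cap a) = d \cap \Sigma_2$ and $\phi_{\Sigma_1}^{-1}(\Sigma_1 \cap a') = d \cap \Sigma_1$, choose representatives of $a$, $a'$, $d$ in minimal position with $\partial P$, pick product disks in $M_2^\circ$ realizing $\phi_{\Sigma_2}$ on each component of $a \cap \Sigma_2$ (so with $\partial_0$ a component of $a \cap \Sigma_2$ and $\partial_1$ the corresponding component of $d \cap \Sigma_2$) and product disks in $M_1^\circ$ realizing $\phi_{\Sigma_1}$ on each component of $d \cap \Sigma_1$ (with $\partial_0 = $ component of $d \cap \Sigma_1$, $\partial_1 = $ component of $a' \cap \Sigma_1$), make them disjoint by an innermost-position argument, and glue them along the $P$-arcs where consecutive pieces meet. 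The union is a disk mapped into $M$ whose $\partial_0$ is $a$ and whose $\partial_1$ is $a'$; checking it is (isotopic to) a genuine product disk — the transversality/positivity condition along the two boundary arcs and that the corners have full angle — is routine since each building block already has these properties and the gluing takes place along $P \subset \Sigma$. Hence $\phi_\Sigma(a) = a'$.

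\textbf{Main obstacle.} I expect the crux to be the general-position/simplification step: ensuring that a product disk $\Delta$ realizing $\phi_\Sigma(a) = a'$ can be isotoped (rel its boundary arcs, or rel the relevant structure) so that $\Delta \cap S$ consists only of the ``essential'' parallel arcs running from $\partial_0\Delta$ to $\partial_1\Delta$, with no closed curves and no trivial boundary-compressible arcs, and simultaneously that the representatives of $a$ and $a'$ meet $\partial P$ minimally in a way compatible with the disk. This needs a careful combination of irreducibility of $M$ (to kill spheres obtained from innermost circles of $\Delta \cap S$), incompressibility of $\Sigma$ (to kill bigons and to control boundary arcs), and an analysis of outermost arcs of $\Delta \cap S$ whose endpoints lie on a single boundary arc $\partial_i \Delta$ — such an outermost arc cuts off a sub-bigon lying entirely in one $M_j^\circ$ with one side on $\Sigma$ and one side on $P \subset \partial B^3_j$, and one must argue this can be pushed across $S$, reducing $|\Delta \cap S|$. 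A secondary subtlety is bookkeeping the cyclic/linear order of the sub-bigons and verifying the alternation between $M_1^\circ$ and $M_2^\circ$, together with matching the subsurface-projection conventions ($a \cap \Sigma_i$ as defined in the preliminaries) to the actual pieces $a$ is cut into by $P$; this is where the hypotheses that $\Sigma_i \cap B^3_i = P$ and that $F$ respects the polygon identification get used. Everything else — the reverse gluing and the verification that the assembled object is a product disk — is then essentially formal.
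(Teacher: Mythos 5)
Your overall strategy is the same as the paper's (cut the product disk along the summing sphere, read the pieces in $M_1^\circ$ and $M_2^\circ$ as product disks for the summands, and glue in the converse direction), but there is a genuine gap at exactly the step you flag as the crux, and your proposed resolution of that step is wrong. You claim that after surgery one can remove every arc of $\Delta\cap S$ having both endpoints on the same boundary arc, so that the disk becomes a linear chain of sub-bigons alternating between $M_1^\circ$ and $M_2^\circ$ separated by parallel arcs. This is false in general. In minimal position the boundary representatives $\gamma,\gamma'$ genuinely cross the interior of $P\subset\Sigma$; near each component of $\gamma\cap P$ the disk is forced locally into $M_2^\circ$ (the positive normal there points into $M_2^\circ$), and near each component of $\gamma'\cap P$ into $M_1^\circ$. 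When these crossings are not aligned with where the intermediate arc runs, the minimal disk necessarily retains arcs of $D\cap P'$ with both endpoints on $\gamma$ (or both on $\gamma'$), cutting off ``bubble'' components adjacent to only one of $\gamma,\gamma'$. Such an arc cannot be pushed across the sphere: the horizontal boundary of a product disk must stay on $\Sigma$ and keep representing $a$ and $a'$, and $\gamma$ meets $\partial P$ essentially, so the isotopy you invoke does not exist. The correct statement is the paper's \cref{claim:shapeofPicapD}: any such arc cuts off a \emph{bigon} lying entirely in one of $M_1^\circ$, $M_2^\circ$; and the proof of that claim is the technical heart of the lemma --- it uses incompressibility of $\Sigma_1,\Sigma_2$ to produce a disk in the summand and then derives contradictions with minimality of the intersections with $\partial P$ and with $P'$ via bigons cut off by $\partial P$, rather than a push-across. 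The bubbles must then be carried along in the construction of $d$ (their $\delta$-segments coincide with segments of the neighboring pieces), so both your structural description of the minimal configuration and the bookkeeping built on it would have to be redone; the same oversight propagates to your converse direction, where pieces meeting only $\gamma$ or only $\gamma'$ also occur and must be fitted in.

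Three smaller inaccuracies point in the same direction. First, you invoke irreducibility of $M$ and assert that $S$ ``bounds balls on both sides''; neither is a hypothesis (the $M_i$ are arbitrary) and neither is true or needed --- circle components of $D\cap P'$ are removed by swapping a subdisk of $D$ for an innermost subdisk of the \emph{disk} $P'$, using nothing about the ambient manifolds. Second, the arcs along which the pieces of $D$ are glued lie in the complementary disk $P'$ of the splitting sphere, not in $P$; to read the assembled arc $\delta$ as an arc in $\Sigma$ one needs the identification of $\Sigma$ with $\Sigma'=(\Sigma\setminus P)\cup P'$, which your phrase ``intermediate arcs in $P$'' glosses over. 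Third, your concrete description of $d$ is reversed: the $M_2^\circ$-pieces carry components of $a\cap\Sigma_2$ to components of $d\cap\Sigma_2$ and the $M_1^\circ$-pieces carry components of $d\cap\Sigma_1$ to components of $a'\cap\Sigma_1$, so $d$ is traced along the $\partial_1$-sides of the $M_2^\circ$-pieces and the $\partial_0$-sides of the $M_1^\circ$-pieces, not the other way around.
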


\begin{remark}\label{rem:diskcomp}
    Note the following equivalent and short formulation of \Cref{lemma:diskcomposition}. For incompressible surfaces $\Sigma_1$ and $\Sigma_2$ in $3$-manifolds $M_1$ and~$M_2$, respectively, with monodromies $\phi_1$ and~$\phi_2$, respectively, the monodromy $\phi$ of their Murasugi sum along any given summing region $P$ satisfies the following composition formula
\begin{equation}\label{eq:phi=phi1phi2}
    \phi=\Phi_1\circ\Phi_2.
    \end{equation}
    Here, $\circ$ denotes composition of partial maps\footnote{For partial maps $f\colon A\to B$ and~$g\colon B\to C$, we define $g\circ f\colon A\to C$ as the partial map with
    $\mathrm{domain}(g\circ f)\coloneqq\{a\in A\mid a\in\mathrm{domain}(f), f(a)\in \mathrm{domain}(g)\}$
    and %
    $g\circ f (a)\coloneqq g(f(a))$ for all $a\in\mathrm{domain}(g\circ f)$.} and $\Phi_i$ denotes the partial self-map on $\mathcal{A}(\Sigma)$ induced from $\phi_i$ by extension by the identity. 
    We make the latter precise for $\Phi_1$ ($\Phi_2$ is defined analogously).
    Every $a\in \mathcal{A}(\Sigma)$ can be represented by composition of arcs $\gamma_l$ that are either  contained in $\mathrm{cl}(\Sigma\setminus \Sigma_1)$ (where the closure is taken in $\Sigma$) or in $\Sigma_1$.
    We pick a minimal such representative (number of arcs minimal), and consider the arc $\gamma$ given by the composition of those $\gamma_l$ that lie in $\mathrm{cl}(\Sigma\setminus \Sigma_2)$ with representatives of $\phi_1([\gamma_l])$ if for all of those $\phi_1([\gamma_l])$ exists.
    If the latter is not the case, $\Phi_1$ is not defined on $a$, otherwise $\Phi_1(a)\coloneqq [\gamma]$.
    In other words, $\Phi_1$ is the partial map with $\mathrm{domain}(\Phi_1)\coloneqq \{a\in \mathcal{A}(\Sigma)\mid a\cap\Sigma_1\subseteq \mathrm{domain}(\phi_1)\}$ such that $\Phi_1(a)\cap\Sigma_1=\phi_1(a\cap\Sigma_1)$ and $\Phi_1(a)\cap \mathrm{cl}(\Sigma\setminus \Sigma_1)=a\cap \mathrm{cl}(\Sigma\setminus \Sigma_1)$ for all $a\in\mathrm{domain}(\Phi_1)$.
    
    Note also that, by \Cref{rmk:phitopartialopenbook}, \Cref{eq:phi=phi1phi2} recovers the celebrated results about fiberedness of Murasugi sums of surfaces stated in the first paragraph of this section.
\end{remark}

To establish \cref{lemma:diskcomposition} \Cref{item:A} $\Rightarrow$ \Cref{item:B}, we will analyze how a product disk $D$ that witnesses $\phi_{\Sigma}(a)=a'$ intersects $M_2^\circ$ and $M_1^\circ$, and how the components of these intersections can be interpreted as product disks for $\Sigma_2\subset M_2$ and $\Sigma_1\subset M_1$ that witness $\phi_{\Sigma_2}(\Sigma_2\cap a)=d\cap \Sigma_2$ and $\phi_{\Sigma_1}^{-1}(\Sigma_1\cap a')=d\cap \Sigma_1$, respectively. For the converse, \Cref{item:B} $\Rightarrow$ \Cref{item:A}, we will see how a set of product discs that witness $\phi_{\Sigma_2}(\Sigma_2\cap a)=d\cap \Sigma_2$ and
    $\phi_{\Sigma_1}^{-1}(\Sigma_1\cap a')=d\cap \Sigma_1$ can be reinterpreted in $M$ such that they glue together to give a product disc $D$ for $\Sigma\subset M$ that witnesses $\phi_{\Sigma}(a)=a'$. Here are the (somewhat long) details.
\begin{proof}[Proof of \cref{lemma:diskcomposition}]
Let us first fix our setup for the Murasugi sum. Pick a disk $P'$ with boundary equal to $\partial P$ and interior disjoint from $\Sigma$, such that $P\cup P'$ is the splitting sphere of~$M$.
In particular, the 3--manifolds $M_1^{\circ}$ and $M_2^{\circ}$ have $P\cup P'$ as a boundary component.
Denote by $\Sigma_i^{\circ}$ the closure of $\Sigma_i \setminus P$.
We find that 
$\Sigma'\coloneqq(\Sigma\setminus P)\cup P'$ is (after smoothing) a surface
that is diffeomorphic to~$\Sigma$
 via a diffeomorphism from $\Sigma$ to $\Sigma'$ that is the identity away from~$P$. We write $\Sigma_i'$ for the image of~$\Sigma_i$ under this diffeomorphism.

    \textbf{\Cref{item:A} $\Rightarrow$ \Cref{item:B}:}
    Let $a$ and $a'$ be in $\mathcal{A}(\Sigma)$ with $\phi(a) = a'$, and
let $\gamma$ and $\gamma'$ be arcs in $\Sigma$ representing $a$ and $a'$, respectively.
We pick these representatives such that they intersect $\partial P$ minimally.
By definition of $\phi$, there exists a product disk witnessing $\gamma$ going to~$\gamma'$,
which may be made transverse to $P'$ by a small isotopy.
Among all such product disks (which need not all be isotopic),
we pick a product disk $D$ such that the number of components of $D\cap P'$ is minimal.

It follows that $D\cap P'$ has no circle components. Otherwise, consider the innermost circle component in $P'$. It bounds disks $E \subset D$ and $E' \subset P'$, with $(E')^{\circ} \cap D = \varnothing$.
Then $(D \setminus E) \cup E'$ is a product disk such that, after a small isotopy,
its intersection with $P'$ has fewer components than $D \cap P'$. This contradicts the minimality assumption above.

We consider the connected components $D_1,\dots,D_j,\dots, D_N$ of $D\cap M_1^\circ$ and~$D\cap M_2^\circ$. These connected components are disks (in fact, as a smooth surface with corners each is a $2n$-gon for some~$n$, with sides alternating between subsets of $P'$ and of $\gamma\cup \gamma'$), and, in the rest of the proof, we simply refer to them as ``disks'' or as $1$-disks or $2$-disks to indicate they are components of $D\cap M_1^\circ$ and $D\cap M_2^\circ$, respectively.
See \Cref{fig:Schematic_Disk} for an example of a disk $D$ for which~$N=5$.
    \begin{figure}[ht]
    \centering
    \includegraphics[width=0.6\linewidth]{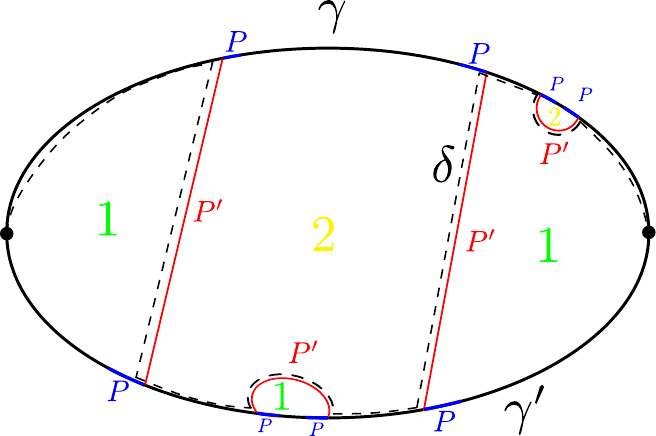}
     \caption{A schematic picture of the disk~$D$, together with its intersections with $M_1^\circ$ and~$M_2^\circ$, where 1 (green) indicates a 1-disk and 2 (yellow) indicates a 2-disk. In this case, the arc $\delta$ is obtained as the union of the negative parts of the boundaries of the 2-disks and the positive part of the boundaries of the 1-disks (note that these coincide on~$P'$).}
\label{fig:Schematic_Disk}
    \end{figure}
In fact, this example can be taken as a guiding example through the rest of the proof.

We start by observing certain restrictions on how the disks $D_j$ can look and lie inside~$D$. For this we describe $D$ locally, near intersections of $P'$ with $\gamma$ and~$\gamma'$.
First, note that a positive normal pushes $\gamma \cap P$ into $M_2$; see \Cref{fig:Local_arc}. Therefore, a component of $\gamma \cap P$ must always be part of a $2$-disk.
    \begin{figure}[ht]
    \centering
    \includegraphics[width=0.6\linewidth]{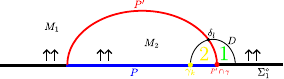}
    \caption{The disk $D$ near~$P' \cap \gamma$, divided into its intersection with $M_2^\circ$ (labeled~$2$), and $M_1^\circ$ (labeled~$1$). Here the intersection of $D$ and $P$ is contained in~$2$.}
\label{fig:Local_arc}
    \end{figure}
    Second, a negative normal pushes $\gamma' \cap P$ into~$M_1$; see \Cref{fig:Local_arc_image}. Therefore, a component of $\gamma' \cap P$ must always be part of a $1$-disk.
    
\begin{figure}[ht]
    \centering
    \includegraphics[width=0.6\linewidth]{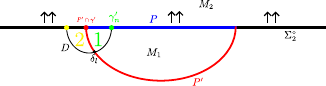}
    \caption{The disk $D$ near~$P' \cap \gamma'$, divided into its intersection with $M_1^\circ$ (labeled~$1$), and $M_2^\circ$ (labeled~$2$). Here the intersection of $D$ and $P$ is contained in~$1$.}
\label{fig:Local_arc_image}
    \end{figure} With these two local descriptions of $D$ close to the intersections of $P'$ with $\gamma$ and~$\gamma'$, and invoking minimality of the intersections of $D$ with $P'$ and $\partial D$ with $\partial P=\partial P'$, we observe the following about~$D$.
    
    \begin{claim}\label{claim:shapeofPicapD}
Let $I$ be a connected component of~$P'\cap D$. If both endpoints of $I$ lie on~$\gamma$, then $I$ union the subinterval of $\gamma$ with the same endpoints is the boundary of some $2$-disk (in fact, a bigon).
If instead both endpoints lie on~$\gamma'$, then $I$ union the subinterval of $\gamma'$ with the same endpoints is the boundary of some $1$-disk.
\end{claim}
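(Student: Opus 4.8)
The plan is to prove \Cref{claim:shapeofPicapD} by an innermost-disk argument internal to $D$, using the two local pictures (\Cref{fig:Local_arc} and \Cref{fig:Local_arc_image}) together with the minimality assumptions: that $\gamma,\gamma'$ meet $\partial P$ minimally, that $\partial D$ meets $\partial P=\partial P'$ minimally, and that $D$ meets $P'$ in the fewest possible components. First I would fix a connected component $I$ of $P'\cap D$. Since $D\cap P'$ has no circle components (established just above), $I$ is an arc whose two endpoints lie on $\partial D=\gamma\cup\gamma'$ (they cannot lie on $\partial P'=\partial P$ without contradicting minimality of $\gamma\cup\gamma'$ against $\partial P$, after noting that an endpoint of $I$ on $\partial P'$ would be an intersection point of $\partial D$ with $\partial P$ that could be removed). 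The curve $I$ separates $D$ into two sub-disks; I will look at the one, call it $E$, that is innermost in the sense that its interior contains no other component of $P'\cap D$ — such a choice exists because $P'\cap D$ is a finite disjoint union of arcs in a disk. Then $E^{\circ}$ lies entirely in $M_1^{\circ}$ or entirely in $M_2^{\circ}$ (it cannot meet $P'$, and it cannot meet $P$ in its interior since $\Sigma\cap B_i^3=P$ forces $P\cap D^{\circ}\subseteq P'\cap D^{\circ}=\varnothing$, so crossing between the $M_i^{\circ}$ happens only along $P'$). Hence $E$ is a $1$-disk or a $2$-disk.

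Next I would analyze the boundary of this innermost sub-disk $E$. Its boundary is $I$ together with a single subarc $J$ of $\partial D=\gamma\cup\gamma'$ with the same endpoints as $I$. I claim $J$ is contained either entirely in $\gamma$ or entirely in $\gamma'$. Indeed, $\gamma$ and $\gamma'$ are disjoint on $\Sigma$ except possibly at their shared endpoints on $\partial\Sigma$, and they occupy $\partial_0 D$ and $\partial_1 D$ respectively; the only places $\partial D$ switches between the two are the two full-angle corners of $D$ (its shared endpoints). If $J$ passed through a corner, then $E$ would be a sub-disk of $D$ containing a corner of $D$, and the constant-side arcs $\{0\}\times[0,1]$ or $\{1\}\times[0,1]$ of the product disk would be part of $\partial E$ — but these map to points of $M$ on $\partial\Sigma$, and tracking which $M_i^{\circ}$ the adjacent sheet lies in via the normal-direction convention of a product disk shows the corner cannot be separated off by an arc $I$ of $P'\cap D$ meeting $D$ transversally without creating an excess intersection of $\partial D$ with $\partial P$. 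So $J\subseteq\gamma$ or $J\subseteq\gamma'$. If $J\subseteq\gamma$, then both endpoints of $I$ lie on $\gamma$, and since every component of $\gamma\cap P$ must be part of a $2$-disk (the positive-normal observation, \Cref{fig:Local_arc}), the disk $E$ abutting $\gamma$ along $J$ must be a $2$-disk; symmetrically, if $J\subseteq\gamma'$ then $E$ is a $1$-disk by the negative-normal observation (\Cref{fig:Local_arc_image}). This is exactly the conclusion of the claim provided $I$ itself is the innermost curve, i.e.\ $J$ is the full subinterval of $\gamma$ (resp.\ $\gamma'$) between the endpoints of $I$; so finally I would promote the innermost statement to the statement about an arbitrary component $I$ with both endpoints on $\gamma$ (resp.\ $\gamma'$) by observing that any other component $I''$ of $P'\cap D$ nested inside $E$ would, by the same argument applied one level down, also have both endpoints on $\gamma$ (resp.\ on $\gamma'$) and bound a $2$-disk (resp.\ $1$-disk) — but then the innermost such $I''$, together with the subinterval of $\gamma$ it cuts off, bounds a bigon meeting $P'$ once, and an isotopy of $D$ across that bigon removes $I''$ from $P'\cap D$, reducing the number of components and contradicting minimality. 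Hence no such $I''$ exists, $E$ is the bigon $I\cup J$ with $J$ the subinterval of $\gamma$ (resp.\ $\gamma'$) between the endpoints of $I$, and the claim follows.

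The main obstacle I anticipate is the careful bookkeeping at the two full-angle corners of the product disk $D$: making rigorous the assertion that an innermost sub-disk of $D$ cut off by a component of $P'\cap D$ cannot contain a corner, and ruling out components $I$ of $P'\cap D$ with one endpoint on $\gamma$ and one on $\gamma'$. Both require combining the local models of \Cref{fig:Local_arc} and \Cref{fig:Local_arc_image} — i.e.\ that $\gamma\cap P$ always sits on the $M_2$ side and $\gamma'\cap P$ always on the $M_1$ side — with the minimality of $|D\cap P'|$ to perform the innermost-bigon removal isotopies. The circle-component case having already been dispatched in the preceding paragraph of the proof, the remaining work is purely this planar, transversality-and-minimality argument; everything else is routine.
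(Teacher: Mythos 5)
Your argument has a genuine gap, and it is located exactly where the real content of the claim lies. In the case $J\subseteq\gamma$ you conclude that the innermost sub-disk $E$ must be a $2$-disk ``since every component of $\gamma\cap P$ must be part of a $2$-disk''. But that local normal-direction observation only controls the side of the splitting sphere on which $D$ lies along $\gamma\cap P$. The endpoints of $I$ are points of $\gamma\cap\partial P$, and the subarc $J$ of $\gamma$ between them may lie entirely \emph{outside} $P$ (say in $\Sigma_1^{\circ}$); there, a neighborhood of $\gamma$ in $M$ sits inside $M_1^{\circ}$, so nothing local prevents $E$ from being a $1$-disk bounded by an arc of $\gamma$ and an arc of $P'$. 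Ruling out precisely this configuration (a $1$-disk disjoint from $\gamma'$) is the heart of the claim, and it is not a planar statement about $D$: the paper's proof uses incompressibility of $\Sigma_1'$ to trade $\partial D_j$ for a disk inside the surface, extracts bigons with $\partial P$, and then needs incompressibility of $\Sigma_2$ a second time, with minimality of $|\gamma\cap\partial P|$ delivering the contradiction. Your proposal never invokes incompressibility, so it cannot reach this conclusion.

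The final ``promotion'' step is also invalid. Every component of $D\cap P'$ is an arc whose endpoints lie in the fixed finite set $(\gamma\cup\gamma')\cap\partial P$ (the representatives $\gamma,\gamma'$ are fixed and meet $\partial P$ minimally), so the number of arc components of $D\cap P'$ cannot be reduced at all; the minimality of $|D\cap P'|$ over product disks from $\gamma$ to $\gamma'$ only serves to kill circle components. One cannot ``isotope $D$ across the bigon'' bounded by $I''\subset P'$ and $g\subset\gamma$ without moving $\partial_0 D=\gamma$ or violating the positive-normal condition of a product disk along it. Indeed, if such a removal were possible it would apply just as well to an innermost $I$ itself and would show that no component of $P'\cap D$ has both endpoints on $\gamma$ --- contradicting the paper's own examples (the bigons $D_2,D_5$ in \Cref{fig:Schematic_Disk}, and the ``bubbles'' appearing later around \Cref{fig:Localdelta}), whose existence is exactly what the claim describes rather than excludes. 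A smaller point: your assertion that an innermost sub-disk cannot contain a corner of $D$ is false --- components of $P'\cap D$ running from $\gamma$ to $\gamma'$ are the generic case and innermost ones do cut off corners; this is harmless only because the claim concerns components with both endpoints on the same arc, but it signals that the purely two-dimensional innermost scheme is not the right frame. In short, the claim requires the three-dimensional input of incompressibility of both summands, which your proof never uses.
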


As an illustration of \Cref{claim:shapeofPicapD}, consider $D$ given in \Cref{fig:Schematic_Disk}.
There are four connected components of~$P'\cap D$.
Two have one end point on $\gamma$ and one on~$\gamma'$.
The other two have both end points on $\gamma$ or both end points on $\gamma'$.
Observe that the latter two are the boundaries of disks $D_j$, as claimed.

\begin{proof}[Proof of \Cref{claim:shapeofPicapD}]
We consider the case of both endpoints of $I$ lying on~$\gamma$. A completely analogous argument switching 1 with 2 and $\gamma$ with~$\gamma'$, yields the second part of the claim.

Note that $I$ and $\gamma$ cut out a bigon $B$ from~$D$,
which is a union of $1$- and $2$-disks.
The statement to be proven is simply that $B$ consists of a single $2$-disk.
Let us assume toward a contradiction that this is not the case, and thus, that there exists a
1-disk $D_j$ that is disjoint from $\gamma'$.

Such a 1-disk is a polygon with half of its sides on $P'$, and half of 
them on $\gamma$. There is one side on $P'$, ``the long side", such that the 
long side together with a segment of $\gamma$ encloses $D_j$. Let us take $D_j$ 
to be innermost, in the sense that $D_j$ is the only 1-disk enclosed by 
the long side together with a segment of $\gamma$.

As discussed above, a component of $\gamma \cap P$ must always be part of a 2-disk.
So $(\partial D_j) \cap \gamma$ is disjoint from the interior of $P$,
and thus $\partial D_j \subset \Sigma_1^{\circ} \cup P' = \Sigma'_1$.
Therefore, $D_j$ is an embedded disk in $M_1^{\circ}$ with boundary on the surface $\Sigma'_1$.
However, $\Sigma'_1$ is isotopic to $\Sigma_1$ in~$M_1$, which is incompressible,
and so $\Sigma'_1$ is incompressible itself, so there's a disk in $\Sigma'_1$ with boundary $\partial D_j$. This implies that there is a disk $E$ in $\Sigma_1$ whose boundary is sent 
to the boundary of $D_j$ by the homeomorphism between $\Sigma_1$ and $\Sigma_1'$. The 
boundary of $P$ intersects $E$ in a non-zero number of chords. So there are 
at least two bigons formed by the boundary of $E$ and the boundary of $P$. 

It follows that there is at least one bigon formed by (1) $\gamma$ and the 
boundary of $P$, or (2) a $P'$ side s of $D_j$ (sent to $\Sigma_1$) and the 
boundary of $P$. See \Cref{fig:BigonCase2} for an illustration of the latter case.

    \begin{figure}[ht]
    \centering
    \includegraphics[width=0.6\linewidth]{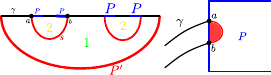}
    \caption{The bigon formed by a segment $s$ of $P'$, sent to $\Sigma_1$, and the boundary of $P$, in case (2). Note that, in particular, the points $a$ and $b$ are on the same side of $P$, and on the other side of $s$ there is a 2-disk which is a bigon whose boundary is the union of $s$ and a segment of $\gamma$.}
\label{fig:BigonCase2}
    \end{figure}

In case (1), we reach a contradiction because we had assumed that $\gamma$ and $\partial P$ intersect minimally. In case (2), we may assume, since there are two bigons, that $s$ is not the long side.

Then it follows that there is a 2-disk whose boundary consists of a $P'$
segment (the segment $s$ from above) and a segment $g$ of $\gamma$, such that 
the two endpoints of $s$ lie on the same side of the boundary of $P$. To reach the desired contradiction, we show that such a 2-disk does not exist.

Indeed, by pushing through the bigon formed by s and the boundary of $P$, 
we can isotope that 2-disk to a disk $F$, such that the boundary of $F$ 
consists of $g$ (the segment of $\gamma$ from above), and a segment on the 
boundary of $P$. We can do that isotopy without introducing any 
intersections between the interior of F and $\Sigma$. 
So now, we can apply 
the previous argument to $F$: since $F$ is embedded in $M_2$, with boundary on 
$\Sigma_2$, which is incompressible, we get a disk $G$ embedded into 
$\Sigma_2$ with the same boundary as $F$. This disk contains a bigon formed 
by a segment of $\gamma$ and the boundary of $P$ (again using the fact 
that there are at least two bigons, and at most one of them can contain 
$s$). This contradicts minimality. See \Cref{fig:Otherbigon} for an illustration.
\begin{figure}[ht]
    \centering
    \includegraphics[width=0.6\linewidth]{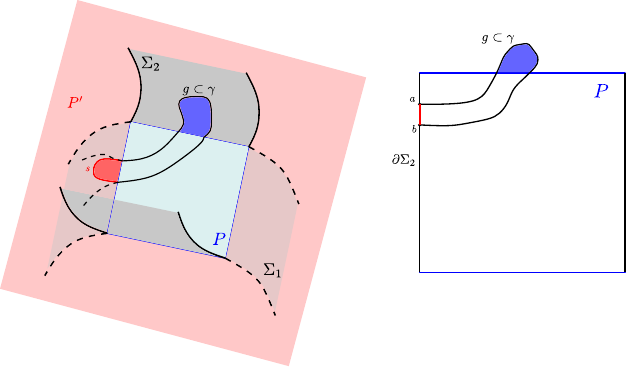}
    \caption{Using the bigon in $P'$ from before (shaded dark red), we can isotope away from $P'$ so that we get a disk contained in $\Sigma_2$. But then we must have another bigon formed by a segment of $\gamma$ and the boundary of $P$ (shaded blue) which gives the desired contradiction.}
\label{fig:Otherbigon}
    \end{figure}
\end{proof}

With the analysis of $P'\cap D$ from \Cref{claim:shapeofPicapD}, we are ready to start defining an arc~$\delta$ such that $d\coloneqq[\delta]\in\mathcal{A}$ is as desired in \Cref{item:B}.
Towards this, we define a subarc $\delta_i$ of $\delta$ for each disk $D_i$ and $\delta$ will be their union. Recall that a disk $D_i$ is a connected component of $D\cap M_1^\circ$ or~$D\cap M_1^\circ$.
We first discuss the case $D_i \subset D\cap M_2^\circ$. Note that the by \Cref{claim:shapeofPicapD}%
, $P' \cap D_i$ intersects $\gamma$ exactly twice. We denote these points $p_1$ and~$p_2$, ordered by the orientation of $\gamma$ . In particular, there is a segment $\gamma_i \subset \gamma$ going from $p_1$ to~$p_2$. Let $\delta_i$ be the segment of $\partial D_i$ that starts in $p_1$ and ends in~$p_2$, and is disjoint from $\gamma'$ in its interior. Using the identification of $\Sigma_2$ and~$\Sigma_2'$, this implies that $D_i$ can be identified with a product disk in $M_2$ that witnesses $\gamma_i$ going to~$\delta_i$. See \Cref{fig:Disk2} for an illustration.
\begin{figure}[t]
    \centering
    \includegraphics[width=0.3\linewidth]{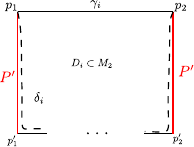}
    \caption{A schematic picture of a disk~$D_i \subset M_2$.}
\label{fig:Disk2}
    \end{figure}
    
Next, we discuss the case $D_i \subset D\cap M_1^\circ$. Let $p_1$ be the first intersection point of $P' \cap D_i$ and $\gamma$ (the order coming from the orientation of~$\gamma$), and $p_2$ the last intersection point of $P' \cap D_i$ and~$\gamma$. Now, note that by \Cref{claim:shapeofPicapD}%
, $P' \cap D_i$ intersects $\gamma'$ exactly twice, in points $p'_1$ and $p'_2$ ordered by the orientation of $\gamma'$ (here the orientation of $\gamma'$ is the one that makes it have the same starting point as~$\gamma$).
In particular, there is a segment $\gamma'_i \subset \gamma'$ going from $p'_1$ to~$p'_2$.  Let $\delta_i$ be the segment in $\partial D_i$ that starts in $p'_1$ and ends in~$p'_2$, and is disjoint from $\gamma'$ in its interior. Using the identification of $\Sigma_1$ and~$\Sigma'_1$, this implies that $D_i$ can be identified with a product disk in $M_1^\circ$ that witnesses $\delta_i$ going to~$\gamma'_i$. See \Cref{fig:Disk1} for an illustration.
\begin{figure}[t]
    \centering
    \includegraphics[width=0.3\linewidth]{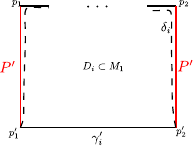}
    \caption{A schematic picture of a disk~$D_i \subset M_1$.}
\label{fig:Disk1}
    \end{figure}

 The desired $d\in\mathcal{A}(\Sigma)$ turns out to be the isotopy class of the arc $\delta\coloneqq \cup_{1}^{N}\delta_i\subset \partial (D\cup D\cap P')\subset D$, i.e.~the union of all the segments~$\delta_i$, where we make use of the identification of $\Sigma$ with~$\Sigma'$, meaning we interpret $P'$ as a subset of~$\Sigma$. We argue that $\delta$ is connected, in fact it is an arc with boundary equal to the boundary of~$\gamma$, and we orient $\delta$ as to have the same start point as~$\gamma$. Towards this, let $D_1$ denote the disk that contains the start point of $\gamma$ and $D_N$ the disk that contains the end point of~$\gamma$, and lets analyze all disks $D_i$ that are neither $D_1$ nor~$D_N$.
 
 Note that a disk $D_i \subset M_2^\circ$ which intersects both $\gamma$ and $\gamma'$ contains in its boundary exactly two subarcs given as connected components of $P'\cap D$ that have end points on both $\gamma$ and~$\gamma'$. In fact, one of these arcs has the boundary point on equal to $p_1$ and~$p_1'$, let is call it~$I_1$, and the other has the boundary point equal to $p_2$ and~$p_2'$, lets call it~$I_2$. Hence, there are two disks $D_{j} \subset M_1^\circ$ and $D_{k} \subset M_1$ that meet $D_i$ along those two subarcs, say $D_j\cap D_i=I_1$ and~$D_k\cap D_i=I_2$; see left-hand side of \Cref{fig:Localdelta}.
 \begin{figure}[ht]
    \centering
    \includegraphics[width=0.9\linewidth]{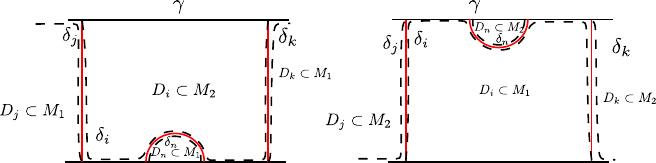}
    \caption{The segments $\delta_i$ overlap over~$P'$, and moreover join up to a connected arc. This also happens in the bubbles, where the segments $\delta_n$ and $\delta_m$ coincide with a segment of~$\delta_i$.}
\label{fig:Localdelta}
    \end{figure}
 Analogously a disk $D_i \subset M_2^\circ$ that intersects both $\gamma$ and $\gamma'$ has corresponding disks $D_{j} \subset M_1^\circ$ and~$D_{k} \subset M_1$; see right-hand side of \Cref{fig:Localdelta}. In this case, the segments $\delta_i$ and $\delta_{j}$ overlap on the segment of $P'$ between $p_1$ and~$p'_1$, and $\delta_i$ and $\delta_{k}$ overlap on the segment of $P'$ between $p_2$ and~$p'_2$. Moreover, by \Cref{claim:shapeofPicapD} the only disks that can be between these two segments are bubbles: disks in $M_1^\circ$ with an arc in $ \gamma$ on the positive side, and an arc in $P'$ on the negative side (if~$D_i \subset M_2^\circ$); and disks in $M_2^\circ$ with an arc in $P'$ on the positive side, and an arc in $\gamma'$ on the negative side (if~$D_i \subset M_1^\circ$); see \Cref{fig:Localdelta}. For the disks $D_1$ and~$D_N$, which also both intersect $\gamma$ and~$\gamma'$, the situation is similar: If~$N=1$, we have only one subarc $I$ of the boundary that is a connected component of $P'\cap D$ that intersects both $\gamma$ and~$\gamma'$. In case of $D_N$ the boundary of $I$ is $p_1$ and~$p_1'$, and we have a unique disk $D_j$ with~$D_j\cap D_N=I$. In case of $D_1$ the boundary of $I$ is $p_2$ and~$p_2'$, and we have a unique disk $D_j$ with~$D_j\cap D_1=I$. If~$1=N$, we have~$D_1=D_N=D$.
 
 If a disk $D_n$ is disjoint from $\gamma$ or~$\gamma'$, then by \Cref{claim:shapeofPicapD} it is a bigon, so it is one of the bubbles discussed above. Then, there is a unique disk $D_i$ that it is adjacent to---instead of two, but as before the segment $\delta_n$ obtained from it coincides with a segment of~$\delta_i$. See again \Cref{fig:Localdelta} for an illustration of this.
 This ensures that $\delta$ is indeed a well-defined arc as claimed, and moreover that the disks in $M_2^\circ$ witness $\phi_2(\Sigma_2 \cap a) = \Sigma_2 \cap d$ and the disks in $M_1^\circ$ witness $\phi_1^{-1}(\Sigma_1 \cap a') = \Sigma_1 \cap a$.

    \textbf{\Cref{item:B} $\Rightarrow$ \Cref{item:A}:} We turn to the converse. That is, we assume that there exists $d\in\mathcal{A}(\Sigma)$ such that $\phi_2(\Sigma_2\cap a)=d\cap \Sigma_2$ and
    $\phi_1^{-1}(\Sigma_1\cap a')=d\cap \Sigma_1$, and pick arcs $\gamma, \gamma'$, and $\delta$ that are representatives of $a,a'$, and $d$, respectively, such that their intersections with $P\subset \Sigma_1$, $P\subset \Sigma_2$, $\Sigma_1$, and $\Sigma_2$ are minimal.
    Let $D_1,\dots,D_j,\dots, D_N$ be the product disks that witness $\phi_2(\Sigma_2\cap a)=d\cap \Sigma_2$ and
    $\phi_1^{-1}(\Sigma_1\cap a')=d\cap \Sigma_1$. We isotope them such that each of them is contained in $M_1^\circ$ or $M_2^\circ$ as follows. For a disk $D_j$ in $M_2$ witnessing $\phi_2(\Sigma_2\cap a)=d\cap \Sigma_2$, that means a product disk $D_j\subseteq M_2$ for $\Sigma_2$ bringing a segment of $\Sigma_2\cap \gamma$ to a segment of $\delta\cap \Sigma_2$,
    we use the identification of $\Sigma_2$ with $\Sigma_2'$ to see the segment of $\delta\cap \Sigma_2$ as a subset of~$\Sigma_2'$. Similarly, for a disk $D_j$ in $M_1$ witnessing $\phi_1^{-1}(\Sigma_1\cap a')=d\cap \Sigma_1$, that is a product disk $D_j\subseteq M_1$ for $\overline{\Sigma_1}$ bringing a segment of $\delta\cap \Sigma_1$
    to a segment of $\Sigma_1\cap \gamma'$, we use the identification of $\Sigma_1$ with $\Sigma_1'$ to see the segment of $\delta\cap \Sigma_1$ as a subset of~$\Sigma_1'$. By abuse of notation, we call the resulting disk in $M_2^\circ$ and $M_1^\circ$, respectively, also $D_j$ and only consider those $D_j$ from here on out.

    The idea of the proof is to observe that the union of all the $D_j$ (where the union happens along all the segments of their boundary that lie in~$P'$) results in a product disk $D$ that witnesses~$\gamma$ going to $\gamma'$; see \Cref{fig:Gluing_disks'} for an example with the disks from \Cref{fig:Schematic_Disk}.
 \begin{figure}[ht]
    \centering
    \includegraphics[width=0.8\linewidth]{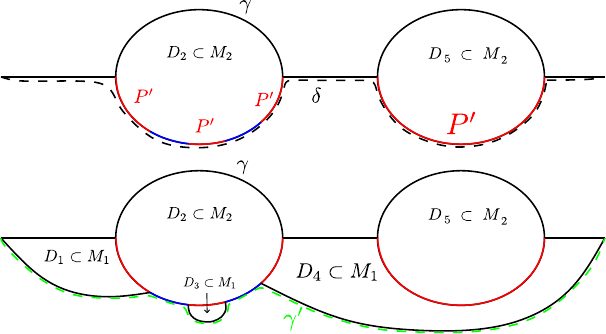}
    \caption{On the top, an arc $\gamma$ together with two disks $D_2, D_5 \subset M_2$, going from $\gamma$ to~$\delta$. On the bottom, the disks $D_1, D_3, D_4 \subset M_1$ going from $\delta$ to~$\gamma'$. Gluing all of the disks together yields the product disk $D$ witnessing~$\phi(\gamma) = \gamma'$.}
\label{fig:Gluing_disks'}
    \end{figure}
    We introduce notation towards making this precise. 
    
    As before, we call a $D_j$ an $i$ disk, if it is contained in~$M_i^\circ$. We let $D_1$ be the product disk that contains the start point of $\delta$ and let $D_N$ be the product disk that contains the end point of~$\delta$. Both of these intersect both $\gamma$ and~$\gamma'$. 

    We consider $D_i$ different from $D_1$ and~$D_N$, and distinguish two different cases. First, the case where $D_i$ intersects both $\gamma$ and~$\gamma'$. Second, the case where $D_i$ only intersects one of them. Note that by definition $D_i$ is a bigon whose boundary is the union of a segment of $\gamma$ (or~$\delta$) and a segment of $\delta$ (or~$\gamma'$). Moreover, the segment that is not on $\gamma$ (or~$\gamma'$) must lie on~$P'$.

    \begin{claim} \label{claim:gluing_segment}
        If a segment is on negative side of the boundary of a $2$-disk and on the positive side of the boundary of a $1$-disk, then it lies entirely on~$P'$.
    \end{claim}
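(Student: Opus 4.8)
The plan is to reduce the claim to the elementary set\nobreakdash-theoretic identity $\Sigma_1'\cap\Sigma_2'=P'$ inside $M=M_2^\circ\cup_F M_1^\circ$, which follows at once from the way $P'$, the surfaces $\Sigma_i^\circ$, and the splitting sphere $P\cup P'$ were set up at the start of the proof.

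First I would recall the shape of the disks being glued. After the isotopies that use the identifications $\Sigma_i\cong\Sigma_i'$, each $2$-disk is a product disk for $\Sigma_2'$ contained in $M_2^\circ$ whose negative side is a sub-arc of (a representative of) an arc of $d\cap\Sigma_2$; in particular, the negative side of any $2$-disk is contained in $\Sigma_2'$. Symmetrically, each $1$-disk is a product disk for $\overline{\Sigma_1'}$ contained in $M_1^\circ$ whose positive side is a sub-arc of (a representative of) an arc of $d\cap\Sigma_1$, and so is contained in $\Sigma_1'$. Hence a segment that lies on the negative side of some $2$-disk and on the positive side of some $1$-disk is contained in $\Sigma_1'\cap\Sigma_2'$.

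Second, I would compute this intersection. By construction $\Sigma_1^\circ\subset M_1^\circ$ and $\Sigma_2^\circ\subset M_2^\circ$, and $M_1^\circ$ and $M_2^\circ$ meet along exactly the splitting sphere $P\cup P'$; moreover the interior of $P'$ is disjoint from $\Sigma\supseteq\Sigma_1^\circ\cup\Sigma_2^\circ$, and each $\Sigma_i^\circ$ meets $P$ exactly along $\partial P=\partial P'$. Together these give $\Sigma_1^\circ\cap\Sigma_2^\circ=\Sigma_1^\circ\cap P'=P'\cap\Sigma_2^\circ=\partial P\subset P'$, and therefore $\Sigma_1'\cap\Sigma_2'=(\Sigma_1^\circ\cup P')\cap(\Sigma_2^\circ\cup P')=P'$. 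Combined with the previous paragraph, this shows that any segment as in the hypothesis lies entirely on $P'$, which is the assertion of the claim.

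I do not expect a genuine obstacle: the statement is essentially bookkeeping internal to the construction. The two points that need care are, first, confirming that the isotopies performed just before the claim really do move the $d$\nobreakdash-sides of both families of disks into $\Sigma_2'$ and $\Sigma_1'$, respectively (so that the intersection computation applies as stated); and second, correctly identifying the ``negative side of a $2$-disk'' and the ``positive side of a $1$-disk'' as precisely the sides coming from $d$, rather than from $\gamma$ or $\gamma'$, which amounts to tracking the orientation conventions for product disks of $\Sigma_2$ and of $\overline{\Sigma_1}$. Both are routine once the setup from the opening of the proof has been unwound.
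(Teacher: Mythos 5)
Your argument is correct, but it reaches $P'$ by a different route than the paper. The paper's proof stays three-dimensional: a segment shared by a $1$-disk and a $2$-disk must lie in $M_1^\circ \cap M_2^\circ = P\cup P'$, and then an orientation argument excludes $P$ --- at a point of $P$ the negative side of $\Sigma$ faces $M_1^\circ$, so a $2$-disk, which approaches its negative boundary arc from the negative side of the surface yet is contained in $M_2^\circ$, cannot have that arc on $P$; hence the segment lies on $P'$. You instead argue two-dimensionally: by the isotopies performed in the setup, the negative ($d$-)side of every $2$-disk lies on $\Sigma_2'$ and the positive ($d$-)side of every $1$-disk lies on $\Sigma_1'$, and $\Sigma_1'\cap\Sigma_2'=P'$. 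This buys a proof with no orientation bookkeeping at $P$ (excluding the interior of $P$ is automatic, since the $\Sigma_i'$ contain $P'$ rather than $P$), at the price of leaning on the precise form of the isotopies; the paper's version uses only which piece $M_i^\circ$ each disk lies in together with the normal convention at $P$, so it works without recording where the $d$-sides land. One small imprecision: your chain $\Sigma_1^\circ\cap\Sigma_2^\circ=\Sigma_1^\circ\cap P'=P'\cap\Sigma_2^\circ=\partial P$ is not literally right --- these sets are, respectively, the vertex set of $P$, the union of the sides of $P$ that are properly embedded in $\Sigma_1$, and the union of those properly embedded in $\Sigma_2$ --- but all of them are contained in $\partial P=\partial P'\subset P'$, which is all you need, so the conclusion $\Sigma_1'\cap\Sigma_2'=P'$ and hence the claim stand.
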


    \begin{proof} [Proof of \Cref{claim:gluing_segment}] 
        If a segment belongs to a $1$-disk and a $2$-disk, then it needs to be on the separating sphere dividing $M_1$ and~$M_2$, which is~$P \cup P'$. Then, on $P$ the orientation would be wrong, because on $P$ the negative side lies on~$M_1$, not~$M_2$, so the segment must lie on~$P'$.
    \end{proof}

    Suppose that $D_i$ intersects both $\gamma$ and~$\gamma'$. If~$D_i \subset M_2$, then one segment of the boundary of the bigon lies on~$\gamma$, and the other segment lies on $\delta$ (and this contains parts of~$\gamma'$). The segments on $P'$ at the endpoints connect $\gamma $ to~$\gamma'$. Then, these segments also belong to disks $D_j$ and $D_k$ in~$M_1$, which also intersect both $\gamma$ and~$\gamma'$.
     \begin{figure}[b]
    \centering
    \includegraphics[width=0.5\linewidth]{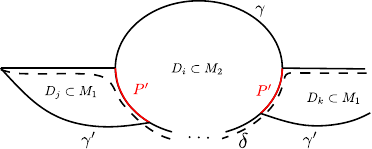}
    \caption{A disk $D_i \subset M_2$ that intersects both $\gamma$ and~$\gamma'$, glued to disks $D_j, D_k \subset M_1$. Both of these disks also intersect $\gamma$ and $\gamma'$.}
\label{fig:gamma_to_gamma'}
    \end{figure}
    The situation with $D_i \subset M_1$ is analogous.

    Now, for $D_1$ and~$D_n$, the situation is similar, as they intersect both $\gamma$ and~$\gamma'$, except now in both cases there is a unique segment of $P'$ connecting $\gamma$ and~$\gamma'$. On the other side of this segment, there is another disk $D_j$ which is of the form discussed above.
    
    Finally, suppose that $D_i$ only intersects $\gamma$ and not~$\gamma'$. Then, it is a bigon with one side on $\gamma$ and another one lying entirely on~$P'$. Then we must have that~$D_i \subset M_2$, since $1$-disks move segments from $\delta$ to $\gamma'$ so they cannot be disjoint from~$\gamma'$. Moreover, there is a unique disk $D_j \subset M_1$ on the other side of the segment lying on~$P'$, and this disk intersects both $\gamma$ and~$\gamma'$. See \Cref{fig:Bubble} for an illustration.

    \begin{figure}[b]
    \centering
    \includegraphics[width=0.3\linewidth]{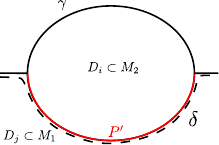}
    \caption{A disk that intersects $\gamma$ but not~$\gamma'$.}
\label{fig:Bubble}
    \end{figure}
    
    Similarly, if $D_i$ intersects $\gamma'$ and not~$\gamma$, then~$D_i \subset M_1$, and the segment of $\partial D_i$ that does not lie on $\gamma$ lies on~$P'$. Moreover, there is a unique disk $D_j \subset M_2$ on the other side of the segment lying on~$P'$, and this disk intersects both $\gamma$ and~$\gamma'$.

    Then, gluing all disks together yields a product disk witnessing $\phi(\gamma) = \gamma'$.
\end{proof}

We obtain the following immediate corollary for fixed arcs in Murasugi sums.

\begin{corollary}\label{cor:fixedarcinMS}
    Let $\Sigma$ be the Murasugi sum of incompressible surfaces $\Sigma_1$ and~$\Sigma_2$. Let $a$ be in~$\mathcal{A}(\Sigma)$. Then the following are equivalent.

\begin{itemize}
    \item $a$ is fixed.
    \item There exists $d\in\mathcal{A}(\Sigma)$ such that $\phi_1(\Sigma_1\cap a)=d\cap \Sigma_1$ and
    $\phi_2^{-1}(\Sigma_2\cap a)=d\cap \Sigma_2$.
\end{itemize}

\end{corollary}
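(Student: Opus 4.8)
The plan is to obtain the corollary as the special case $a'=a$ of \Cref{lemma:diskcomposition}. An arc class $a$ is fixed precisely when $\phi_{\Sigma}(a)=a$, which is condition \Cref{item:A} of \Cref{lemma:diskcomposition} with $a'$ taken to be $a$ itself. Substituting $a'=a$ into condition \Cref{item:B} of that lemma turns it into the assertion that there exists $d\in\mathcal{A}(\Sigma)$ restricting on the two Murasugi summands to $\phi_{\Sigma_2}(\Sigma_2\cap a)$ and $\phi_{\Sigma_1}^{-1}(\Sigma_1\cap a)$, respectively -- which is exactly the condition displayed in the corollary. So the proof is a one-line appeal to \Cref{lemma:diskcomposition}. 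The only thing requiring care is which of $\Sigma_1,\Sigma_2$ occupies which slot: this is dictated by the ordering of the Murasugi summands (equivalently, by the convention on the positive normal of the summing region $P$ in \Cref{def:murasugissum}), and I would spell it out so that the placement of $\phi_{\Sigma_1}$, $\phi_{\Sigma_2}$ and their inverses agrees with \Cref{item:B}.

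As a sanity check -- and to connect with the remark made just before \Cref{lemma:diskcomposition} -- one can observe that the arc $d$ obtained this way is the common value $\Phi_1^{-1}(a)=\Phi_2(a)$ coming from the composition formula $\phi_{\Sigma}=\Phi_1\circ\Phi_2$ of \Cref{rem:diskcomp}: the characterizations of $\Phi_1$ and $\Phi_2$ by their effect on subsurface projections recorded in \Cref{rem:diskcomp} recover exactly the two stated conditions on $d\cap\Sigma_1$ and $d\cap\Sigma_2$ (from $d=\Phi_1^{-1}(a)$ and $d=\Phi_2(a)$, respectively), while the existence of such a $d$ conversely gives $\Phi_1(d)=a$ and $\Phi_2(d)=a$ on the relevant subsurfaces and hence $\phi_{\Sigma}(a)=\Phi_1(\Phi_2(a))=a$.

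I do not expect any genuine obstacle here: all of the substantive work resides in \Cref{lemma:diskcomposition}, and the corollary is a straightforward specialization. The only (very minor) point that needs a line of justification is the orientation/labelling bookkeeping for the summing region, which is what determines the exact positions of $\phi_{\Sigma_1}$, $\phi_{\Sigma_2}$ and their inverses in the displayed equivalence.
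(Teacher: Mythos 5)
Your proposal is correct and is essentially the paper's proof, which is literally a one-line appeal to \Cref{lemma:diskcomposition} specialized to $a'=a$. Your closing caveat about aligning the placement of $\phi_{\Sigma_1}$, $\phi_{\Sigma_2}$ and their inverses is well taken: the lemma with $a'=a$ gives $\phi_{\Sigma_2}(\Sigma_2\cap a)=d\cap\Sigma_2$ and $\phi_{\Sigma_1}^{-1}(\Sigma_1\cap a)=d\cap\Sigma_1$, which is the form the paper actually invokes later (e.g.\ in the proof of \Cref{crit}), so the indices in the corollary's displayed condition should indeed be read in that way.
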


\begin{proof}
    This is an immediate application of \Cref{lemma:diskcomposition}. %
\end{proof}

\begin{remark}\label{rem:fixedarcs}
In the language of \Cref{rem:diskcomp},  \Cref{cor:fixedarcinMS} is simply the statement that $a\in\mathcal{A}(\Sigma)$ is a fixed arc (i.e.~$\phi(a)=a$) if and only if $\Phi_2(a)=\Phi_1^{-1}(a)$, which is immediate from~\eqref{eq:phi=phi1phi2}. Note that if $a$ is a fixed arc, $\Phi_2(a)=\Phi_1^{-1}(a)$ in general is different from $a$.

The rest of the section is devoted to necessary conditions that allow to conclude that if $a$ is fixed, then $\Phi_2(a)=\Phi_1^{-1}(a)=a$.
\end{remark}

\subsection{Necessary conditions for fixed arcs in Murasugi sums to restrict to fixed arcs in the summands}

We start with the case where the Murasugi sum is trivial, i.e. when the sum is directly a connected sum of the boundary links, and the boundary connected sum of the surfaces. Here the summing region $P$ is a bigon. This means that $\Sigma_i \setminus P$ is obtained from $\Sigma_i$ by cutting out a boundary parallel disk, and thus for any arc $a\subset \Sigma$, there is a natural identification of each component of $a\cap \Sigma_i$ with each component of $a \cap (\Sigma_i \setminus P)$. But by \Cref{cor:fixedarcinMS} we have that $\gamma \cap (\Sigma \setminus P) = \delta \cap (\Sigma \setminus P)$ where $\gamma $ is a representative of $a$ and $\delta$ is a representative of $\Phi_2(a)=\Phi_1^{-1}(a)$. Hence we immediately get that $a \cap \Sigma_i$ is a collection of fixed arcs in $\Sigma_i$, that is, we have proved the following proposition.

\begin{proposition} \label{prop:connectsumcrit}
    Let $\Sigma$ be the boundary connected sum of incompressible surfaces $\Sigma_1$ and $\Sigma_2$. If there exists an arc $a$ such that $\phi(a)=a$, then $\Phi_1^{-1}(a) = \Phi_2(a) = a$.\qed
\end{proposition}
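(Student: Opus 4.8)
The plan is to combine \Cref{cor:fixedarcinMS} (equivalently, the reformulation in \Cref{rem:fixedarcs}) with the very special geometry of the summing region in the trivial case. Since $\Sigma$ is a boundary connected sum, the summing region $P$ is a $2$-gon, so inside each $\Sigma_i$ it is a disk exactly one of whose two sides lies on $\partial\Sigma_i$; hence $P$ is boundary-parallel in each $\Sigma_i$. Writing $\Sigma_i^\circ\coloneqq\mathrm{cl}(\Sigma_i\setminus P)$, the first step is to record the consequence that, for an arc $a$ in $\Sigma$ represented by $\gamma$ meeting $\partial P$ minimally, every component of $\gamma\cap P$ runs from one side of $P$ to the other (a component with both endpoints on the same side would be removable by an innermost-disk isotopy inside the disk $P$, contradicting minimality). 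Therefore the components of $\gamma\cap\Sigma_i$ and of $\gamma\cap\Sigma_i^\circ$ are in a natural bijection, each matched pair differing only by a collar segment inside $P$. In particular this identifies $\mathcal{A}(\Sigma_i)$ with $\mathcal{A}(\Sigma_i^\circ)$ compatibly with the subsurface projections from $\Sigma$ and with the monodromies $\phi_{\Sigma_i}$ and $\phi_{\Sigma_i^\circ}$ (product disks for one of these surfaces may be taken disjoint from the collar, hence also serve for the other). Note also that $\Sigma\setminus P=\Sigma_1^\circ\sqcup\Sigma_2^\circ$ is a disjoint union.

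Next I would set $d\coloneqq\Phi_2(a)$; the hypothesis $\phi(a)=a$ together with the composition formula $\phi=\Phi_1\circ\Phi_2$ (\Cref{rem:diskcomp}, \Cref{rem:fixedarcs}) gives $d=\Phi_2(a)=\Phi_1^{-1}(a)$. Unwinding the extension-by-identity descriptions from \Cref{rem:diskcomp}: from $d=\Phi_2(a)$ we get that $d$ and $a$ agree on $\mathrm{cl}(\Sigma\setminus\Sigma_2)=\Sigma_1^\circ$, and that $d\cap\Sigma_2=\phi_2(a\cap\Sigma_2)$; and from $d=\Phi_1^{-1}(a)$ (which is the ``$\Phi$'' associated with $\overline{\Sigma_1}$) we get that $d$ and $a$ agree on $\mathrm{cl}(\Sigma\setminus\Sigma_1)=\Sigma_2^\circ$, and that $d\cap\Sigma_1=\phi_1^{-1}(a\cap\Sigma_1)$. (Alternatively, one can quote \Cref{cor:fixedarcinMS} directly for the identities $\phi_1(\Sigma_1\cap a)=d\cap\Sigma_1$ and $\phi_2^{-1}(\Sigma_2\cap a)=d\cap\Sigma_2$, but it is the extra information that $d$ \emph{also} agrees with $a$ off the respective summand that makes the argument close.)

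Finally I would transport these agreements through the identification of the first step. Since $a$ and $d$ agree on $\Sigma_2^\circ$ and $\mathcal{A}(\Sigma_2)\cong\mathcal{A}(\Sigma_2^\circ)$ compatibly with restriction, we obtain $a\cap\Sigma_2=d\cap\Sigma_2=\phi_2(a\cap\Sigma_2)$, so every component of $a\cap\Sigma_2$ is a fixed arc of $\phi_2$; by the definition of $\Phi_2$ this means $\Phi_2(a)=a$. Symmetrically, the agreement of $a$ and $d$ on $\Sigma_1^\circ$ forces $a\cap\Sigma_1=d\cap\Sigma_1=\phi_1^{-1}(a\cap\Sigma_1)$, so $a\cap\Sigma_1$ is fixed by $\phi_1$ and hence $\Phi_1^{-1}(a)=a$. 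Combining, $\Phi_1^{-1}(a)=\Phi_2(a)=a$, which is the assertion.

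The only genuinely delicate point is the first step: one must make sure that deleting the boundary-parallel disk $P$ from $\Sigma_i$ really does set up a bijection of arc classes that simultaneously intertwines the subsurface-projection maps $a\mapsto a\cap\Sigma_i$ and $a\mapsto a\cap\Sigma_i^\circ$ \emph{and} the two monodromies, so that ``fixed by $\phi_{\Sigma_i}$'' corresponds to ``fixed by $\phi_{\Sigma_i^\circ}$''. Everything else is a short unwinding of the definitions of $\Phi_1$, $\Phi_2$ and an invocation of \Cref{cor:fixedarcinMS}/\Cref{rem:fixedarcs}.
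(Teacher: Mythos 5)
Your argument is correct and is essentially the paper's own proof: since the summing region is a bigon, $P$ is boundary-parallel in each $\Sigma_i$, so restrictions of arcs to $\Sigma_i$ and to $\mathrm{cl}(\Sigma_i\setminus P)$ are naturally identified, and combining this with \Cref{cor:fixedarcinMS} (equivalently the composition formula of \Cref{rem:diskcomp}, as in \Cref{rem:fixedarcs}) forces $a\cap\Sigma_i$ to be fixed, i.e.\ $\Phi_1^{-1}(a)=\Phi_2(a)=a$. The point you single out as delicate is even a bit more than is needed: one only has to intertwine the two subsurface projections (so that agreement of $a$ and $d$ on $\Sigma_i^\circ$ gives $a\cap\Sigma_i=d\cap\Sigma_i$), not the monodromies $\phi_{\Sigma_i}$ and $\phi_{\Sigma_i^\circ}$.
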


\begin{remark}
    As a fun aside, it is not hard to see that \Cref{prop:connectsumcrit} recovers uniqueness of prime decomposition of links in $S^3$ (or null-homologous links in any irreducible $3$-manifold $M$), where the decomposing spheres are given by the product disks that correspond to fixed arcs. (Hint: while, in general, the choice of collection of fixed arcs exhibiting the prime decomposition is not unique, \Cref{prop:connectsumcrit} shows that the resulting irreducible surfaces from any two such choices are isotopic.)
\end{remark}

Now, we will see that we can obtain similar results even if the Murasugi sum is not immediately a connected sum, provided we have some information of the images of arcs in the original surfaces. 

Let $P\subset \Sigma_i$ be a plumbing region (an alternatingly embedded $2n$-gon) in  a surface $\Sigma_i$ in $3$-manifold $M_i$, and $a,b\in \mathcal{A}(\Sigma_i)$ with start point $p$ on $\partial \Sigma_i \cap P=\partial\Sigma_i\cap \partial P$. Below we define $b$ being to the right (and strictly to the right) of $a$. The idea is to consider representatives that intersect $P$ minimally, taking the segments that start at $p$ and consider them up to isotopy in $P$, where isotopies have to fix $\partial P\cap \partial \Sigma_i$, and then see whether they are to the right or strictly to the right; see \Cref{fig:a<Pb} for illustration.
\begin{figure}[ht]
    \centering
    \includegraphics[width=0.4\linewidth]{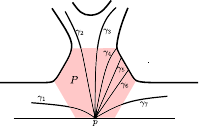}
    \caption{Example of a plumbing region $P$ (red), and arcs $\gamma_1,\dots,\gamma_7$ starting at the point  $p\in\partial \Sigma$ such that $[\gamma_1]<[\gamma_2]<[\gamma_3]<[\gamma_4]<[\gamma_5]<[\gamma_6]<[\gamma_7]$ and $[\gamma_1]<_P[\gamma_2]=_P[\gamma_3]<_P[\gamma_4]<_P[\gamma_5]<_P[\gamma_6]<_P[\gamma_7]$.}
\label{fig:a<Pb}
    \end{figure}

We say $b$ is strictly to the right of $a$ in $P$, written $a<_P b$ if the elements of $b\cap P$ with starting point $p$ are strictly to the right of the elements of $a\cap P$ with starting point $p$. In other words, if $a=[\gamma]$ and $b=[\delta]$ such that $\gamma$ and $\delta$ intersect $P$ minimally, and $s_\gamma$ and $s_\delta$ denote the arcs given as the components of $P\cap \gamma$ and $P\cap \delta$ that start at $p$, then $s_\delta$ ends to the right of $s_\gamma$ and if neither ends in $\partial P\cap \partial \Sigma$, then they end on different sides of the polygon $P$.

We say $b$ is to the right of $a$ in $P$, written $a\leq_P b$, if $a<_Pb$ or if $a=_Pb$, where the latter is defined to mean that elements of $b\cap P$ with starting point $p$ are equal to the elements of $a\cap P$ with starting point $p$. In other words, if $a=[\gamma]$ and $b=[\delta]$ such that $\gamma$ and $\delta$ intersect $P$ minimally, and $s_\gamma$ and $s_\delta$ denote the arcs given as the components of $P\cap \gamma$ and $P\cap \delta$ that start at $p$, then $a=_Pb$ means that either $s_a$ and $s_b$ end in the same point on $\partial P$ or they end in the same component of $\partial P\setminus \partial \Sigma$.

Note that $a\leq b$ does imply $a\leq_P b$, but $a<b$, in general, does not imply $a<_Pb$, see e.g.~$a=[\gamma_2]$ and $b=[\gamma_3]$ in \Cref{fig:a<Pb}.

With this setup, it is natural to make the following definition.

\begin{definition}
    Let $P\subset \Sigma_i$ be a plumbing region (an alternatingly embedded $2n$-gon) in  a surface $\Sigma_i$ in $3$-manifold $M_i$. Suppose that for all $a\in \mathcal{A}(\Sigma_i)$, with start point $p$ on $\partial \Sigma_i \cap P=\partial\Sigma_i\cap \partial P$, and such that $\phi(a)$ is defined, $a \leq_P \phi(a)$. Then, we say that $\phi_i$ is \emph{right-veering in $P$}.
\end{definition}

\begin{proposition} [Primeness Criterion]\label{crit}
Let $\Sigma_1$ and $\Sigma_2$ be incompressible surfaces in $3$-manifolds $M_1$ and~$M_2$. Denote by $\phi_2$ the monodromy of $\Sigma_2$ and by $\phi_1^{-1}$ the monodromy of~$\overline{\Sigma_1}$.
Let $\Sigma$ be a surface in $M_1 \# M_2$ obtained by a Murasugi sum of $\Sigma_1$ and $\Sigma_2$ along some summing region~$P$. Suppose that $\phi_2$ and $\phi_1^{-1}$ are right-veering in~$P$.

Then, if $a\in\mathcal{A}(\Sigma)$ is fixed, $a \cap \Sigma_i$ are a (possibly empty for one but not for both~$i$) collection of fixed isotopy classes of arcs in $\mathcal{A}(\Sigma_i)$, for~$i = 1,2$.

In particular, if the Murasugi sum is essential and neither $\Sigma_1$ nor $\Sigma_2$ have an essential fixed isotopy class of arcs, then $\Sigma$ has no essential fixed isotopy class of arcs.
\end{proposition}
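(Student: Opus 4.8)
The plan is to reduce the statement to a single cleaner claim: if $a\in\mathcal{A}(\Sigma)$ is fixed, then $a$ equals the arc produced by the composition formula. Concretely, by \Cref{rem:fixedarcs} (equivalently \Cref{cor:fixedarcinMS}) the arc $a$ is fixed precisely when $d:=\Phi_2(a)=\Phi_1^{-1}(a)$ is well-defined, and then $d\cap\Sigma_2=\phi_2(a\cap\Sigma_2)$ while $d\cap\Sigma_1=\phi_1^{-1}(a\cap\Sigma_1)$. Hence it suffices to prove $d=a$: in that case $\phi_2(a\cap\Sigma_2)=a\cap\Sigma_2$ and $\phi_1^{-1}(a\cap\Sigma_1)=a\cap\Sigma_1$, so every component of $a\cap\Sigma_i$ is a fixed isotopy class of arcs in $\Sigma_i$; the clause ``possibly empty for one but not both $i$'' is automatic since $a$ is a non-empty arc in $\Sigma=\Sigma_1\cup\Sigma_2$.

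First I would localize the difference between $a$ and $d$ to the summing region. Unwinding the definitions of $\Phi_2$ and $\Phi_1^{-1}$ from \Cref{rem:diskcomp}, we get $d\cap\mathrm{cl}(\Sigma\setminus\Sigma_2)=a\cap\mathrm{cl}(\Sigma\setminus\Sigma_2)$ and $d\cap\mathrm{cl}(\Sigma\setminus\Sigma_1)=a\cap\mathrm{cl}(\Sigma\setminus\Sigma_1)$. Since $\mathrm{cl}(\Sigma\setminus\Sigma_1)=\Sigma_2^{\circ}$ and $\mathrm{cl}(\Sigma\setminus\Sigma_2)=\Sigma_1^{\circ}$ cover $\Sigma\setminus\mathrm{int}(P)$, the arcs $a$ and $d$ have representatives $\gamma$ and $\delta$ that agree outside $\mathrm{int}(P)$ and cross $\partial P$ in the same finite point set (all intersections minimal). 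As $P$ is a disk, $\gamma$ and $\delta$ can then differ only in how their chords inside $P$ match up these common points; and the sides of $\partial P$ alternate between arcs of $\partial\Sigma_1$ (the odd sides) and arcs of $\partial\Sigma_2$ (the even sides).

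The heart of the argument, and the step I expect to be the main obstacle, is to show that the two right-veeringness-in-$P$ hypotheses force the two matchings to coincide, hence $d=a$. The point is that they push chords in opposite directions: $\phi_2$ right-veering in $P$ sends a chord of $\gamma$ lying in a component of $a\cap\Sigma_2$ that starts on an even side to a chord of $\delta$ weakly to its right in $P$, whereas $\phi_1^{-1}$ is the monodromy of $\overline{\Sigma_1}$, whose orientation is \emph{opposite} to that of $\Sigma$, so $\phi_1^{-1}$ right-veering in $P$ sends a chord of $\gamma$ lying in a component of $a\cap\Sigma_1$ that starts on an odd side to a chord of $\delta$ weakly to its \emph{left} in $\Sigma$. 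I would run an innermost-chord induction: assuming $\gamma\neq\delta$, take a chord of $\gamma$ in $P$ that is outermost among those on which the two matchings first disagree, observe that its endpoints on $\partial P$ and its continuations outside $\mathrm{int}(P)$ are shared with $\delta$, and derive a contradiction from whichever of the two oppositely handed inequalities applies, using that every chord of $P$ is, from the $\Sigma_2$-side, part of an arc starting on an even side and, from the $\Sigma_1$-side, part of an arc starting on an odd side; reconciling these with the cyclic order of the crossing points on $\partial P$ yields $\gamma=\delta$. The delicate bookkeeping here is that the $\leq_P$-inequalities directly constrain only the \emph{initial} chord of a component, so the induction must be organized so that, once outer chords are matched, the remaining disagreement is effectively about an initial chord; this, together with the orientation reversal $\Sigma_1\rightsquigarrow\overline{\Sigma_1}$ that turns ``right'' into ``left'' and the alternation of the sides of $P$, is exactly what makes the squeeze close.

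Finally, for the ``in particular'' statement, assume the Murasugi sum is essential, neither $\Sigma_i$ has an essential fixed isotopy class of arcs, and $a$ is a fixed arc of $\Sigma$; by the above, every component of $a\cap\Sigma_i$ is boundary-parallel in $\Sigma_i$. A standard outermost-arc argument then isotopes $a$ to reduce $|a\cap P|$: each boundary-parallel component of $a\cap\Sigma_i$ cuts off a half-disk in $\Sigma_i$, and because all sides $s_j$ of $P$ are essential in $\Sigma$, these half-disks can be used to push $a$ off $P$ without ever creating an essential subsurface projection. One concludes that $a$ is isotopic into $\Sigma_1^{\circ}$ or $\Sigma_2^{\circ}$, where it remains fixed and hence boundary-parallel, so $a$ is boundary-parallel in $\Sigma$, as desired.
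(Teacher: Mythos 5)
Your overall skeleton (reduce to $d=a$ where $d=\Phi_2(a)=\Phi_1^{-1}(a)$ via \Cref{cor:fixedarcinMS}, localize the difference to the chord systems of $\gamma\cap P$ and $\delta\cap P$) is exactly the paper's, but the heart of your argument has a genuine gap, and it is twofold. First, you have given the hypothesis on $\phi_1^{-1}$ the wrong handedness: you read ``$\phi_1^{-1}$ is right-veering in $P$'' as measured in $\overline{\Sigma_1}$, hence as pushing chords weakly to the \emph{left} in $\Sigma$. In the paper both conditions are meant with respect to the \emph{same} order $\leq_P$ coming from the orientation of $\Sigma_1\subset\Sigma$; this is how the criterion is applied in the proof of \Cref{thm:treeofveeringsurfaces}, where ``$\Sigma_{v'}$ is left-veering'' is converted into ``$\phi'^{-1}$ is right-veering in $P$'' by inverting the inequality, not by reversing the orientation. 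Second, and decisively, under your mixed-direction reading the combinatorial claim you need (``the two chord matchings in $P$ coincide'') is false, so no bookkeeping can close your squeeze. Take $P$ a square with one marked point in the interior of each side, let $A$ consist of the two disjoint chords cutting off one pair of opposite corners and $B$ of the two chords cutting off the other pair. Then $A\neq B$, the systems share endpoints, each chord joins distinct sides, and at the two points on one pair of opposite sides the $B$-chords veer strictly to the right of the $A$-chords while at the two points on the other pair they veer strictly to the left --- exactly the pattern ``weakly right at the $\partial\Sigma_2$-sides, weakly left at the $\partial\Sigma_1$-sides'' that your hypotheses allow. So the opposite-handed setup cannot force $\gamma=\delta$.

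Even granting the correct (same-handed) reading, your sketch omits the actual engine. The paper does not run an innermost-chord induction; it invokes \Cref{lemma:lemma3} (from \cite{FLOR}): two distinct disjoint chord systems in a disk with the same endpoint set must disagree in \emph{both} directions, i.e.\ there is a shared endpoint where the new chord is strictly to the right and another where it is strictly to the left, and in the $n$-gon case with the other endpoints on different sides, so the strict inequality upgrades to $>_P$. The strictly-left instance at a point $p$ then contradicts right-veeringness in $P$ of whichever of $\phi_2$, $\phi_1^{-1}$ governs $p$, using that product disks preserve endpoints, so the component of $\delta\cap\Sigma_i$ starting at $p$ is exactly the image of the component of $\gamma\cap\Sigma_i$ starting at $p$. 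Your ``delicate bookkeeping \dots is exactly what makes the squeeze close'' is precisely the content of this missing lemma, which you neither prove nor cite. (Secondary: for the ``in particular'' the paper's route is shorter and cleaner --- essentiality of the summing region guarantees that an essential fixed $a$ has an essential component $a'\subset a\cap\Sigma_i$, which by the main statement would be an essential fixed arc of $\Sigma_i$, a contradiction --- whereas your outermost-disk isotopy pushing $a$ off $P$ is asserted rather than justified.)
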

\begin{remark}\label{rem:crit}
Writing $\Phi_i$ for the partial self-maps of $\mathcal{A}(\Sigma)$ as defined in~\cref{rem:diskcomp}, the conclusion of \cref{crit} can be rephrased as follows: if $a\in\mathcal{A}(\Sigma)$ is fixed by $\phi$, then $a$
is fixed by $\Phi_1$ and $\Phi_2$. \end{remark}

For the proof of \Cref{crit}, we make use of the following lemma about chords in disks, where a chord is an unoriented straight line between two points on the boundary of the disk. Note that we choose our disks (and polygons), to be convex.
\begin{lemma} [{\cite[Lemma 3.7]{FLOR}}] \label{lemma:lemma3}
    Let $D$ be a disk, $A$ and $B$ finite sets of chords with $\alpha \cap \alpha' = \beta \cap \beta' = \varnothing$ for $\alpha\neq \alpha' \in A$, $\beta\neq \beta' \in B$, and such that $\bigcup_{\alpha \in A} \partial \alpha = \bigcup_{\beta \in B} \partial b$. 
    Then either~$A = B$, or there exists $\alpha_1, \alpha_2 \in A$, $\beta_1, \beta_2 \in B$ with $\alpha_i \cap \beta_i = \{ p_i \} \in \partial D$, and seen from $p_1$, $\beta_1$ is to the right of~$\alpha_1$, and seen from~$p_2$, $\beta_2$ is to the left of~$\alpha_2$. 

    Moreover, if $D$ is an $n$-gon, and for each chord~$\gamma\in A\cup B$, $\gamma$ has endpoints on different sides, we can find chords as above with the additional condition that for $i = 1,2$, the other endpoint of $\alpha_i$ is on a different side than the other endpoint of~$\beta_i$.
\end{lemma}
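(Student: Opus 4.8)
The plan is a short downward induction along innermost chords. Observe first that, since the chords in $A$ are pairwise disjoint and their endpoints exhaust the common finite set $S\coloneqq\bigcup_{\alpha\in A}\partial\alpha\subset\partial D$, the family $A$ is a non-crossing perfect matching of $S$, and likewise $B$; in particular $|A|=|B|=|S|/2$. If $A=B$ we are in the first alternative, so from now on assume $A\neq B$ (which forces $|A|\geq2$) and induct on $|A|$. We fix an orientation of $\partial D$ once and for all, so that ``to the right of $\alpha$ at $p$'' has its usual meaning for a chord $\beta$ emanating from the same boundary point $p$ as $\alpha$; reversing the orientation exchanges ``right'' and ``left'', which is why the two conclusions we are after are symmetric to one another.

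For the inductive step, pick an innermost chord $\{x,x'\}\in A$, that is, a pair $x,x'$ of points of $S$ that are consecutive along $\partial D$; name them so that $x'$ is the successor of $x$. If $\{x,x'\}\in B$ as well, then $A\setminus\{\{x,x'\}\}$ and $B\setminus\{\{x,x'\}\}$ are still distinct non-crossing perfect matchings, now of $S\setminus\{x,x'\}$, and applying the inductive hypothesis to them inside the same disk $D$ yields chords $\alpha_i,\beta_i\in A\cup B$ and boundary points $p_i$ with the desired properties --- deleting the two consecutive points $x,x'$ changes neither the left/right relation at any surviving point nor, in the polygon case, the side on which any surviving point lies. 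If instead $\{x,x'\}\notin B$, write $y$ and $z$ for the $B$-partners of $x$ and $x'$, so $y\neq x'$ and $z\neq x$. Since $x'$ is the boundary-successor of $x$, no point of $S$ lies between $x$ and $x'$ on the short arc, so among the chords issuing from $x$ the chord $\{x,x'\}$ is extreme on that side; hence $\{x,y\}$ lies to one definite side of $\{x,x'\}$ at $x$, and symmetrically $\{x',z\}$ lies to the opposite side of $\{x,x'\}$ at $x'$. Taking $\alpha_1=\alpha_2=\{x,x'\}$, one of the two pairs $(x,\{x,y\})$, $(x',\{x',z\})$ witnesses ``$\beta$ to the right of $\alpha$'' and the other ``$\beta$ to the left of $\alpha$'' (which one is which depends only on the fixed orientation); and since two distinct chords of a convex disk that share an endpoint meet only there, $\alpha_i\cap\beta_i$ is exactly the boundary point $p_i$. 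After relabelling, this is the conclusion of the first part.

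It remains to address the ``Moreover'' clause, and this is the only real obstacle: in the case $\{x,x'\}\notin B$ we must rule out that the ``other endpoints'' of the two witnessing pairs lie on a common side of the polygon. Suppose $x'$ and $y$ were on the same side $e'$. As $x'$ is the boundary-successor of $x$ and $x$ lies on a side different from $e'$, the point $x'$ is the first point of $S$ encountered on $e'$, and $y$ is a further point of $S$ on the same straight side $e'$; consequently every point of $S$ lying strictly between $x$ and $y$ on the arc of $\partial D$ through $x'$ lies on $e'$, and this set is nonempty since it contains $x'$. By the non-crossing property of $B$ together with $\{x,y\}\in B$, all of these in-between points are matched to one another by $B$, so some chord of $B$ has both endpoints on $e'$ --- contradicting the hypothesis that every chord of $A\cup B$ has its two endpoints on different sides. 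The symmetric argument, carried out at $x'$ with $z$ in place of $y$, shows that $x$ and $z$ lie on different sides; and in the recursive case $\{x,x'\}\in B$ the ``different sides'' property is simply inherited from the inductive hypothesis. This completes the plan. The bulk of the work is the bookkeeping of the cyclic order of $S$ on $\partial D$; the one genuinely delicate point is the impossibility argument just given for the polygon refinement.
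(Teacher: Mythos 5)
Your argument is correct. Note that the paper itself does not prove \Cref{lemma:lemma3}; it imports it verbatim from \cite[Lemma 3.7]{FLOR}, so there is no in-paper proof to compare against, and your innermost-chord induction serves as a valid self-contained substitute. The structure is sound: disjointness makes $A$ and $B$ non-crossing perfect matchings of the common endpoint set $S$; an innermost chord $\{x,x'\}$ of $A$ either lies in $B$ (recurse on $S\setminus\{x,x'\}$, with all left/right relations and side assignments unchanged) or not, in which case the extremality of the chord to a boundary-adjacent point forces the $B$-partners of $x$ and of $x'$ to lie on opposite sides of $\{x,x'\}$ at those two points, producing the right/left pair with $\alpha_1=\alpha_2=\{x,x'\}$. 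Two steps deserve to be made explicit rather than asserted: first, the existence of an innermost chord of $A$ (take a chord of $A$ minimizing the number of points of $S$ strictly inside one of the two arcs it cuts off; non-crossingness forces that minimum to be zero); second, in the polygon refinement, the nonempty set of points of $S$ strictly between $x$ and $y$ on the arc through $x'$ is matched to itself by $B$, hence has even cardinality at least two, hence contains an entire chord of $B$ with both endpoints on the side $e'$ --- this parity observation is what actually yields the contradiction with the different-sides hypothesis, and the symmetric argument at $x'$ handles $z$. With those two sentences added, the proof is complete.
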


\begin{proof}[Proof of \Cref{crit}]
Assume that there exists an $a\in\mathcal{A}(\Sigma)$ with $\phi(a)=a$. W.l.o.g.~the endpoints of $a$ are not on $\partial \Sigma_1\cap \partial \Sigma_2$ (if not, isotope them in the boundary, do the argument, and isotope back).

By \Cref{cor:fixedarcinMS}, there exists $d\in\mathcal{A}(\Sigma)$ such that $\phi_1^{-1}(a\cap \Sigma_1)=d\cap\Sigma_1$ and $d\cap\Sigma_2=\phi_2(a\cap \Sigma_2)$. In terms of \Cref{eq:phi=phi1phi2}, $\Phi_2(a)=d=\Phi_1^{-1}(a)$. Hence, we aim to show $a=d$, compare \cref{rem:crit}.
We note that, since the minimal number of intersections of each side of $P$ with $a$ and $d$ are equal, $a=d$ is equivalent to $a\cap \mathrm{cl}(\Sigma\setminus \Sigma_2)=d\cap \mathrm{cl}(\Sigma\setminus \Sigma_2)$,
$a\cap \mathrm{cl}({\Sigma\setminus \Sigma_1})=d\cap \mathrm{cl}({\Sigma\setminus \Sigma_1})$, $a\cap P=d\cap P$. The former two hold; indeed,
\[a\cap \mathrm{cl}(\Sigma\setminus \Sigma_2)\overset{\text{Def. }\Phi_2}{=}\Phi_2(a)\cap\mathrm{cl}(\Sigma\setminus \Sigma_2)\overset{\Phi_2(a)=d}{=}d\cap \mathrm{cl}(\Sigma\setminus \Sigma_2),\]
and analogously for $a\cap \mathrm{cl}({\Sigma\setminus \Sigma_1})=d\cap \mathrm{cl}({\Sigma\setminus \Sigma_1})$;
hence we have to show that $d\cap P=a\cap P$.

If $d\cap P$ is empty, then either $\varnothing=d\cap \Sigma_1$ (hence $a\cap\Sigma_1=\varnothing$ and in particular $a\cap P=\varnothing$) or $\varnothing=d\cap \Sigma_2$ (hence $a\cap\Sigma_2=\varnothing$ in particular $a\cap P=\varnothing$).
Therefore, we consider the case where $d\cap P$ is non-empty. %

To see $a\cap P=d\cap P$, we pick representatives of $a$ and $d$, let us denote them by $\gamma$ and $\delta$, respectively, and choose them to intersect the boundary of $P$ minimally and in the same points. 

Assume towards a contradiction that $a\cap P\neq d\cap P$. Note that this means that, thinking of $P$ as a polygon in~$\mathbb{R}^2$, the following two set of chords of $P$ differ: the set of chords $A$ given by straightening the components of $\gamma \cap P$ and the set of chords $B$ given by straightening the components of~$\delta \cap P$. 
Observe that $A$ and $B$ satisfy the hypotheses of \Cref{lemma:lemma3}, including the $n$-gon condition because of minimal intersection of $\gamma$ and $\delta$ with the sides of~$P$. Hence, \Cref{lemma:lemma3} yields chords~$\alpha\in A$, $\beta \in B$ such that~$\alpha \cap \beta = \{p \}$, $\alpha > \beta$ as seen from~$p$, and the other endpoints of $\alpha$ and $\beta$ (call them $p_\alpha$ and~$p_\beta$) lie on different sides of~$P$, that is, we get $\alpha >_P \beta$. 
Let $i\in\{1,2\}$ such that~$p \in \partial \Sigma_i$.
Let $\alpha^{\mathrm{ex}}$ be the unique subarc of $\gamma$ that starts at $p$ and is a proper arc in~$\Sigma_i$.
($\alpha^{\mathrm{ex}}$ extends the arc in $P$ corresponding to~$\alpha$, and may in fact just be equal to it, namely if~$p_\alpha \in \partial\Sigma_i$). Similarly extend $\beta$ to~$\beta^{\mathrm{ex}}$. Observe that we get that $[\alpha^{\mathrm{ex}}]>_P [\beta^{\mathrm{ex}} ]$.
    
Then $[\beta^{\mathrm{ex}}]= \phi_2([\alpha^{\mathrm{ex}}])$ or $[\beta^{\mathrm{ex}}]= \phi_1^{-1}([\alpha^{\mathrm{ex}}])$ if $i = 2$ or~$i = 1$, respectively. But since 
$[\alpha^{\mathrm{ex}}] >_P [\beta^{\mathrm{ex}}]$ and both $\phi_2$ and $\phi_1^{-1}$ are right-veering in $P$, we get a contradiction.

For the `in particular', we note that an essential $a\in\mathcal{A}(\Sigma)$ is such that there exists $a' \in a \cap \Sigma_i$ that is essential in $\Sigma_i$ for $i = 1$ or $2$. By the above argument, if $a$ is fixed in $\Sigma$, $a'$ would have to be fixed in $\Sigma_i$, yielding a contradiction.
\end{proof}

\begin{proposition}\label{prop:rightright}
    Let $\Sigma_1$ and $\Sigma_2$ be incompressible right-veering surfaces in $3$-manifolds $M_1$ and~$M_2$, and let $\Sigma$ be a surface in $M_1 \# M_2$ obtained by an essential Murasugi sum of $\Sigma_1$ and $\Sigma_2$ along some summing region~$P$. Then $\Sigma$ is right-veering.
    
    If, furthermore, both $\Sigma_i$ are strictly right-veering, then so is $\Sigma$; in particular, $\Sigma$ has no essential fixed isotopy classes of arcs.
    
    The same statements hold for left-veering in place of right-veering.
\end{proposition}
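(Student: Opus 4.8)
The plan is to derive \Cref{prop:rightright} entirely from the composition formula $\phi=\Phi_1\circ\Phi_2$ of \Cref{rem:diskcomp}, reducing it to the single statement that \emph{if $\Sigma_i$ is right-veering, then so is the partial self-map $\Phi_i$ of $\mathcal{A}(\Sigma)$}. Granting this, the first assertion is immediate: for $a\in\mathrm{domain}(\phi)=\mathrm{domain}(\Phi_1\circ\Phi_2)$, the arcs $a$, $\Phi_2(a)$ and $\phi(a)=\Phi_1(\Phi_2(a))$ all start at the start point $p$ of $a$; right-veeringness of $\Phi_2$ gives $a\le\Phi_2(a)$ at $p$ and right-veeringness of $\Phi_1$ gives $\Phi_2(a)\le\Phi_1(\Phi_2(a))=\phi(a)$ at $p$, so by transitivity of the total order at $p$ we obtain $a\le\phi(a)$. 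Note that neither essentiality of the summing region nor \Cref{crit} enters this part.

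To prove the reduced statement, let $a\in\mathrm{domain}(\Phi_i)$ have start point $p$; after an isotopy inside $\partial\Sigma$ we may assume $p\notin P$, so that near $p$ the surface $\Sigma$ coincides either with $\Sigma_i$ or with $\mathrm{cl}(\Sigma\setminus\Sigma_i)$. Represent $a$ by a minimal arc written as a concatenation of subarcs lying alternately in $\Sigma_i$ and in $\mathrm{cl}(\Sigma\setminus\Sigma_i)$ and meeting along points of $\partial P$; a representative of $\Phi_i(a)$ is obtained by leaving the $\mathrm{cl}(\Sigma\setminus\Sigma_i)$-subarcs untouched and replacing each $\Sigma_i$-subarc $c$ by a representative of $\phi_i([c])$, which exists by the domain condition. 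One then compares the two arcs at $p$ by working along the concatenation: at the start point $q\in\partial\Sigma_i$ of a $\Sigma_i$-subarc $c$, right-veeringness of $\Sigma_i$ gives $c\le_q\phi_i([c])$ in $\mathcal{A}(\Sigma_i)$, hence in $\mathcal{A}(\Sigma)$ since $\Sigma$ and $\Sigma_i$ agree near $q$, while passing through a $\mathrm{cl}(\Sigma\setminus\Sigma_i)$-subarc changes nothing; using the standard fact that the total order at a boundary point, restricted to arcs sharing a common embedded initial subarc, agrees with the order at the far endpoint of that subarc, these local comparisons assemble to $a\le\Phi_i(a)$ at $p$. \textbf{This reduced statement is the main technical point:} the bookkeeping needed to see that the $\phi_i$-images of consecutive pieces still concatenate correctly, and that putting everything into minimal position cannot reverse the germ comparison at $p$, is where the real work lies.

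For the second assertion, suppose $\Sigma_1$ and $\Sigma_2$ are strictly right-veering, let $a\in\mathrm{domain}(\phi)$ be essential, and assume toward a contradiction that $\phi(a)=a$ (we already know $\phi(a)\ge a$). Set $d:=\Phi_2(a)$, so that $\Phi_1(d)=\phi(a)=a$. Right-veeringness of $\Phi_2$ gives $a\le d$ at $p$, and right-veeringness of $\Phi_1$ gives $d\le\Phi_1(d)=a$ at $p$; since the total order at $p$ is a genuine total order on the isotopy classes of arcs starting at $p$, this forces $a=d$. Hence $\Phi_2(a)=a$ and $\Phi_1(a)=\Phi_1(d)=a$, which by the definition of $\Phi_i$ means $\phi_i(a\cap\Sigma_i)=a\cap\Sigma_i$ for $i=1,2$; thus $\phi_i$ restricts to a permutation of the finite set $a\cap\Sigma_i\subset\mathcal{A}(\Sigma_i)$. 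Because the Murasugi sum is essential and $a$ is essential, some component $c$ of $a\cap\Sigma_{i_0}$ is essential in $\Sigma_{i_0}$ for some $i_0\in\{1,2\}$, exactly as in the proof of \Cref{crit}. The permutation $\phi_{i_0}|_{a\cap\Sigma_{i_0}}$ fixes the endpoints of each component and preserves the set of essential components (boundary-parallel arcs being fixed by the monodromy), so the orbit of $c$ is a cycle $c,\phi_{i_0}(c),\dots,\phi_{i_0}^{\,N}(c)=c$ of essential arcs all sharing one start point $q$; but strict right-veeringness of $\Sigma_{i_0}$ gives $\phi_{i_0}^{\,j}(c)<_q\phi_{i_0}^{\,j+1}(c)$ for all $j$, so $c<_q c$, a contradiction. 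Therefore $\Sigma$ has no essential fixed isotopy class of arcs, and together with the first assertion this shows that $\Sigma$ is strictly right-veering. Finally, the left-veering statements follow by running the same argument with all inequalities reversed (equivalently, by applying the right-veering case after reversing the relevant total orders), since both the composition formula $\phi=\Phi_1\circ\Phi_2$ and the definition of the $\Phi_i$ are insensitive to the veering direction.
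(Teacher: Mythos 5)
Your proposal follows essentially the same route as the paper: the composition formula $\phi=\Phi_1\circ\Phi_2$, the observation that each $\Phi_i$ inherits right-veeringness from $\phi_i$ (which the paper asserts in one sentence, at the same level of detail that you flag as ``the main technical point''), and transitivity of the total order at the common start point. Your treatment of the strict case---deriving $\Phi_1(a)=\Phi_2(a)=a$ from a hypothetical essential fixed arc and then using essentiality of the summing region to locate an essential component that would be fixed in some $\Sigma_{i_0}$---is only a minor rearrangement of the paper's argument, which reduces strictness to the same essentiality observation.
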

\begin{proof}
We note that if $\phi_i$ are right-veering partially defined self-maps on $\mathcal{A}(\Sigma_i)$, then so are the two partial self-maps $\Phi_i$ on $\mathcal{A}(\Sigma)$.

The key is to observe that composition of two partial self-maps $f$ and $g$ on $\mathcal{A}(\Sigma)$ that are right-veering is again right veering.
Indeed, for every $a$ such that $g(f(a))$ is defined, i.e.~$f(a)$ and $g(b)$ with $b=f(a)$ are defined, we have $f(a)\geq a$ and $g(f(a))\geq f(a)$ and the statement follows from $\geq$ being a total order on elements in $\mathcal{A}(\Sigma)$ with a given fixed start point $p\in\partial\Sigma$.

Both $\Phi_i$ are right-veering, hence, by the last paragraph, so is $\phi\overset{\text{\eqref{eq:phi=phi1phi2}}}=\Phi_1\circ \Phi_2$.

The furthermore follows from the fact that composition of two partial self-maps that are both strictly right-veering is strictly right-veering, which follows as above, replacing one of $\geq$ with $>$. Note that the sum being essential guarantees that if $\gamma \in \mathcal{A}(\Sigma)$ is an essential isotopy class of arcs, then either $\gamma \cap \Sigma_1$ or $\gamma \cap \Sigma_2$ contains an essential isotopy class of arcs (and thus strictly right-veering). This implies that we do not get fixed arcs in this case.
\end{proof}

\section{Tree-guided Murasugi summing%
}\label{sec:trees}
\subsection{Trees of surfaces}

We define a construction of iterative Murasugi summings of %
surfaces in $3$-manifolds, encoded by a tree with vertices surfaces (in a $3$-manifold) and edges indicating how they are summed together. We focus on the case of each individual surface being incompressible and either left- or right-veering and we show that essential fixed arcs, if they exist, arise from fixed arcs in the individual surfaces. 

Let $T$ be an oriented tree, where the vertex set $V$ consists of pairs $(M,\Sigma)$, where $\Sigma$ is a surface in a $3$-manifold $M$, and each edge, an ordered pair $e=\left(v_1=(M_1,\Sigma_1),v_2=(M_2,\Sigma_2)\right)$, is labeled by a summing region $P$ for $v_1$ and~$v_2$, i.e.~embeddings of $f_i\colon P\to \Sigma_i$ as described in \Cref{def:murasugissum}.
We further assume that all summing regions are disjoint; meaning for every pair of distinct edges $e$ and $e'$ their labels $f_1\colon P\to \Sigma_1$ and $f_2\colon P\to \Sigma_2$ for $e$ and $f_1'\colon P'\to \Sigma_1'$ and $f_2\colon P'\to \Sigma_2'$ satisfy
\[f_1(P)\cap f_1'(P')=f_2(P)\cap f_1'(P')=f_1(P)\cap f_2'(P')=f_2(P)\cap f_2'(P')=\varnothing,\] which of course is only a relevant condition when $e$ and $e'$ feature a common vertex (hence two of the surfaces  $\Sigma_1,  \Sigma_1',  \Sigma_2, \Sigma_2'$ are the same).

We call such a labeled tree a \emph{tree of surfaces}, if all surfaces are incompressible, we call it a \emph{tree of incompressible surfaces}. Note that by Gabai \cite{Gabai, Gabaiother} the Murasugi sum of incompressible surfaces is incompressible, so when we iterate this operation we can guarantee that each summand is incompressible, and we can apply our results.

We write $M_T$ and $\Sigma_T$ for the $3$-manifold and the surface resulting from performing Murasugi sums as guided by the edges and their labels. The disjointness condition guarantees uniqueness (up to canonical isotopies) of $M_T$ and $\Sigma_T$. 

\subsection{Tree-guided Murasugi sums of right-veering surfaces is right veering}

We consider the case that all surfaces in a tree are right-veering.

\begin{lemma}\label{lem:treeofrightveeringsurfaces=>rightveeringsurface}
Let $T$ be a tree of incompressible surfaces such that every vertex consists of a right-veering surface. Then $\Sigma_T\subset M_T$ is right-veering. Furthermore, if all surfaces are strictly right-veering, then so is $M_T$.
\end{lemma}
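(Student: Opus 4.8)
The statement is an iterated application of \cref{prop:rightright}. The plan is to induct on the number of edges of the tree $T$. For the base case, a tree with a single vertex and no edges, $\Sigma_T = \Sigma$ is the given right-veering surface, and there is nothing to prove (likewise in the strict case). For the inductive step, suppose the claim holds for all trees with fewer than $k$ edges, and let $T$ have $k \geq 1$ edges.

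\textbf{The inductive step.} Pick a leaf edge $e = (v_1, v_2)$ of $T$ — that is, an edge such that one of its endpoints, say $v_2 = (M_2, \Sigma_2)$, is a leaf of $T$ (every finite tree with at least one edge has such a leaf). Let $T'$ be the tree obtained from $T$ by deleting the vertex $v_2$ and the edge $e$; by the disjointness condition on summing regions, the labels of the remaining edges still define disjoint summing regions in the surfaces of $T'$, and $\Sigma_{T'} \subset M_{T'}$ is well-defined. Note that $\Sigma_T$ is the Murasugi sum of $\Sigma_{T'}$ and $\Sigma_2$ along the summing region $P$ labeling $e$; this uses that the summing region $P$ of $e$ is disjoint from all the other summing regions, so that the Murasugi sums guided by $T'$ and the final Murasugi sum along $e$ can be performed in either order and yield the same $(M_T, \Sigma_T)$ up to canonical isotopy. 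Moreover, this Murasugi sum is essential: the tree is assumed to have essential summing regions — here I would note that the lemma statement implicitly uses essential Murasugi sums, as in the definition of a tree of surfaces and in \cref{prop:rightright}; if this is not already built into \cref{sec:trees}, one should add the hypothesis or remark that $P$ is an essential summing region.\footnote{Here I am assuming the convention, set up in \cref{sec:trees}, that tree-guided Murasugi sums are along essential summing regions; this is needed for \cref{prop:rightright} to apply.} By the inductive hypothesis, $\Sigma_{T'} \subset M_{T'}$ is right-veering (and strictly right-veering if all surfaces are strictly right-veering). Since $\Sigma_2$ is right-veering by hypothesis, \cref{prop:rightright} applied to the essential Murasugi sum $\Sigma_T$ of $\Sigma_{T'}$ and $\Sigma_2$ shows that $\Sigma_T$ is right-veering; and in the strict case, since both $\Sigma_{T'}$ and $\Sigma_2$ are strictly right-veering, the ``furthermore'' part of \cref{prop:rightright} gives that $\Sigma_T$ is strictly right-veering, and in particular has no essential fixed isotopy classes of arcs. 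This completes the induction.

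\textbf{Main obstacle.} The only genuinely non-formal point is the claim that $\Sigma_T$ decomposes as an essential Murasugi sum of $\Sigma_{T'}$ and $\Sigma_2$ — i.e., that the order in which one performs the iterated Murasugi sums does not matter, up to canonical isotopy. This is exactly what the disjointness of the summing regions is designed to guarantee (as remarked when $M_T$ and $\Sigma_T$ were defined), so the argument is really just a matter of spelling out that one may carry out the Murasugi sum along the leaf edge $e$ last. I expect no real difficulty here beyond being careful about the bookkeeping of which $3$-manifold each piece lives in; everything else is a direct invocation of \cref{prop:rightright}. The left-veering case is identical, replacing ``right'' by ``left'' throughout and invoking the corresponding clause of \cref{prop:rightright}.
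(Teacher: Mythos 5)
Your proposal is correct and follows essentially the same route as the paper: the paper's proof is exactly an induction reducing the statement to \cref{prop:rightright} applied to the Murasugi sum of a leaf surface with the sum of the remaining tree. Your side remark about essentiality is a fair observation (the paper's \cref{prop:rightright} is stated for essential sums, though essentiality is only used for the ``strictly'' clause), but it does not affect the correctness of your argument.
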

\begin{proof}
 By induction, it suffices to show that the Murasugi sum $\Sigma\subset M$ of two right-veering surfaces $\Sigma_1\subset M_1$ and $\Sigma_2\subset M_2$ is right-veering. Hence this is a consequence of \Cref{prop:rightright}.
 \end{proof}
\subsection{Fixed arcs in tree-guided Murasugi sums of veering surfaces}

We consider the case of a tree of incompressible surfaces, where each edge consists of an incompressible surface that is either left or right-veering.

\begin{theorem}\label{thm:treeofveeringsurfaces}
Let $T$ be a tree of incompressible surfaces such that every vertex $v$ consists of a surface $\Sigma_v\subset M_v$ that is right-veering or left-veering. 

If $a\in \mathcal{A}(\Sigma_T)$ is a fixed isotopy class of arcs then for every vertex $v$, $a \cap \Sigma_v$ is (a possibly empty) collection of fixed isotopy classes of arcs in $\mathcal{A}(\Sigma_v)$.

In particular, if for every vertex $v$, $\Sigma_v$ is strictly right-veering or strictly left-veering and every summing region is essential, then $\Sigma_T$ is irreducible.
\end{theorem}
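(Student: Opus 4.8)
\textbf{Proof plan for \Cref{thm:treeofveeringsurfaces}.}
The plan is to reduce the statement to an iterated application of \Cref{crit}. The key observation is that the conclusion of \Cref{crit} is really a statement about the partial self-maps $\Phi_1,\Phi_2$ on $\mathcal{A}(\Sigma)$ (see \Cref{rem:crit}): if $a$ is fixed by $\phi$, then it is fixed by both $\Phi_1$ and $\Phi_2$. To chain this over a tree, I would first record a \emph{localization} statement: for an edge $e=(v_1,v_2)$ of $T$, split $M_T$ along the splitting sphere associated to that edge's summing region to write $(M_T,\Sigma_T)$ as a Murasugi sum of two pieces $(M',\Sigma')$ and $(M'',\Sigma'')$, each itself a (smaller) tree of surfaces: $\Sigma'$ is the Murasugi sum guided by the component of $T\setminus e$ containing $v_1$, and likewise $\Sigma''$ for $v_2$. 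The disjointness hypothesis on the summing regions is exactly what makes this splitting well-defined (up to canonical isotopy), and it guarantees that the monodromies $\phi'$ and $\phi''$ restricted to the summing region $P_e$ are right-veering in $P_e$: indeed $P_e$ is contained, up to the relevant identification, in $\Sigma_{v_1}$ (resp.\ $\Sigma_{v_2}$), which is right-veering or left-veering, and for a surface that is left-veering we simply pass to $\overline{\Sigma}$, so that (since $\phi_{\overline{\Sigma}}=\phi_\Sigma^{-1}$) one of $\phi''$, $\phi_1^{-1}:=(\phi')^{-1}$ is right-veering in $P_e$ as required by \Cref{crit}. Here I use that $a\leq \phi(a)$ globally implies $a\leq_P\phi(a)$ for any plumbing region $P$, a fact noted just before the definition of ``right-veering in $P$''.

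Granting the localization, the argument is an induction on the number of edges of $T$. The base case (a single vertex, no edges) is vacuous. For the inductive step, pick any edge $e$ and split $T$ as above into $T'$ and $T''$. If $a\in\mathcal{A}(\Sigma_T)$ is fixed, then by \Cref{crit} applied to this splitting, the arc systems $a\cap\Sigma'$ and $a\cap\Sigma''$ are collections of fixed isotopy classes of arcs in $\Sigma'$ and $\Sigma''$ respectively. Now each component of $a\cap\Sigma'$ is a fixed arc of $\Sigma'$, which is $\Sigma_{T'}$ for the strictly smaller tree $T'$; by the induction hypothesis, for every vertex $v$ of $T'$ the further restriction $(a\cap\Sigma')\cap\Sigma_v$ is a collection of fixed arcs of $\Sigma_v$. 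Since subsurface projection is transitive — $(a\cap\Sigma')\cap\Sigma_v=a\cap\Sigma_v$ for a vertex $v$ of $T'$, because $\Sigma_v\subset\Sigma'\subset\Sigma_T$ and minimal-position representatives can be chosen compatibly (this uses the disjointness of summing regions to see that intersecting first with $\partial\Sigma'$ and then with $\partial\Sigma_v$ gives the same minimal position as intersecting with $\partial\Sigma_v$ directly) — this yields the claim for all vertices of $T'$, and symmetrically for $T''$. Together these exhaust all vertices of $T$, completing the induction.

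For the ``in particular'' statement, suppose every $\Sigma_v$ is strictly right-veering or strictly left-veering and every summing region is essential, and let $a\in\mathcal{A}(\Sigma_T)$ be a fixed isotopy class of arcs; we must show $a$ is boundary parallel, i.e.\ has a representative in $\partial\Sigma_T$. Since every summing region is essential, if $a$ were essential then (as in the proof of \Cref{crit}, and iterating along the tree) $a\cap\Sigma_v$ would contain an essential isotopy class of arcs for some vertex $v$: an essential arc in a Murasugi sum along essential regions restricts to an essential arc in at least one summand, and applying this along the tree locates such a $v$. By the main part of the theorem, that essential arc in $\Sigma_v$ would be fixed by $\phi_v$, contradicting that $\Sigma_v$ is strictly right-veering or strictly left-veering (by definition such surfaces have no essential fixed isotopy classes of arcs). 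Hence $a$ is boundary parallel, so $\Sigma_T$ is irreducible.

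\textbf{Main obstacle.} I expect the crux to be the bookkeeping in the localization step and in the transitivity-of-restriction claim: one must verify that cutting $M_T$ along the sphere of a chosen edge genuinely presents $(M_T,\Sigma_T)$ as a Murasugi sum of the two sub-tree surfaces \emph{with the same summing region $P_e$}, and that minimal-position representatives of an arc can be chosen so that ``restrict to $\Sigma'$, then to $\Sigma_v$'' agrees with ``restrict to $\Sigma_v$'' as isotopy classes. Both hinge on the disjointness of summing regions from the definition of a tree of surfaces; once that is set up carefully, the inductive core is a direct invocation of \Cref{crit} together with the observation that global right-veeringness implies right-veeringness in any plumbing region.
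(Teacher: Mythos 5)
There is a genuine gap in the inductive step: it only covers edges whose two endpoint surfaces veer \emph{oppositely}. To apply \Cref{crit} to the splitting along an edge $e=(v_1,v_2)$ you need \emph{both} of its hypotheses: $\phi''$ right-veering in $P_e$ \emph{and} $(\phi')^{-1}$ right-veering in $P_e$ (i.e.\ $\phi'$ left-veering in $P_e$). Your own localization argument shows that $\phi''$ veers in $P_e$ the way $\Sigma_{v_2}$ veers, and $\phi'$ the way $\Sigma_{v_1}$ veers; so the hypotheses of \Cref{crit} (or of its mirror) hold exactly when $\Sigma_{v_1}$ and $\Sigma_{v_2}$ veer oppositely. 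The theorem, however, allows adjacent vertices to veer the same way --- e.g.\ a tree all of whose vertices are right-veering, which is precisely the case needed for the application to alternative diagrams where neighboring special diagrams are both positive. For such an edge only one of the two conditions holds, and your sentence ``one of $\phi''$, $(\phi')^{-1}$ is right-veering in $P_e$ as required by \Cref{crit}'' misstates the hypothesis, which requires both. The ``pass to $\overline{\Sigma}$'' manoeuvre cannot repair this: reversing the orientation of a surface inverts the monodromy \emph{and} reverses the left/right order, so a right-veering surface stays right-veering; besides, you cannot reverse the orientation of a single summand inside the oriented sum $\Sigma_T$.

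What is missing is the separate (and easier) argument for same-veering edges/subtrees: if $\phi$ is a composition of right-veering partial maps $\Phi_{\widetilde v}$ and $\phi(a)=a$, then at the start point the chain $a\leq \Phi_{\widetilde v_k}(a)\leq\dots\leq\phi(a)=a$ in the total order forces every factor to fix $a$; no appeal to \Cref{crit} is needed there. This is exactly how the paper proceeds: it first collapses each maximal subtree of same-veering vertices to a single vertex (handled by the order argument just described, after noting via \Cref{prop:rightright} that the collapsed surface is itself right- or left-veering), and only then runs an induction --- structurally like yours, using the disjointness of summing regions and the composition formula to localize veering in the summing region --- on the collapsed tree, all of whose edges now join oppositely veering surfaces. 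Your treatment of opposite-veering edges, of the transitivity of restriction, and of the ``in particular'' statement is essentially sound; the argument for same-veering edges must be added for the proof to cover the stated generality.
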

\begin{proof}
Out of $T$, we build a graph $A$ by collapsing each maximal subtree $T_\mathrm{max}$ that contains only right-veering or only left-veering surfaces to a single vertex consisting of the pair $(\Sigma_{T_\mathrm{max}}, M_{T_\mathrm{max}})$ and edges are inherited from $T$ (every edge between vertices in different maximal subtrees as above, becomes an edge between the corresponding new vertices), and are labeled with the natural summing region induced by the canonical inclusion of $(M_v,\Sigma_v)$ in $(\Sigma_{T_\mathrm{max}}, M_{T_\mathrm{max}})$ for all vertices $v$ in $T_\mathrm{max}$.

In conclusion, we have a tree of incompressible surfaces $A$, where each edge $e=(v_1,v_2)$ is between oppositely veering surfaces. More precisely, at least one of the following two is true: firstly, $\Sigma_{v_1}$ is right-veering and $\Sigma_{v_2}$ is left-veering, secondly, $\Sigma_{v_2}$ is right-veering and $\Sigma_{v_1}$ is left-veering.

We conclude the statement for the tree $A$ by applying \cref{crit} and induction on the number of vertices of $A$. For $n=1$ $A=\{v\}$ consists of a single right or left veering $\Sigma_v\subset M_v$), and, as $\Sigma_A=\Sigma_v$, a fixed isotopy class of arcs for $\Sigma_A$ is one for $\Sigma_v$.

Say $A$ has $n$ vertices and the statement holds for all such $A$ with $n-1$ or less vertices. Pick a leaf $v$ of $A$ and consider the tree $A'$ with $n-1$ vertices obtained from removing $v$ (and the edge $e$ to it) from $A$.
Consider $\Sigma_{A'}\subset M_{A'}$ and denote by $\phi'$ its monodromy. Let $v'$ and $v$ be the vertices in $e$, say, without loss of generality, $e=(v',v)$ (as can e.g.~be assumed by switching all orientations on edges and all surfaces).
We let $f\colon P\to \Sigma_{v'},f'\colon P\to \Sigma_v$ be the summing region and note that by composing $f$ with the canonical inclusion $\Sigma_{v'}\subseteq\Sigma_{A'}\subset M_{A'}$, that we have a summing region that defines a Murasugi sum between $\Sigma_{A'}$ and $\Sigma_v$ that is (up to canonical isotopy) $\Sigma_A\subset M_A$.
By the setup, $\Sigma_v$ and $\Sigma_{v'}$ veer oppositely, say, without loss of generality, $\Sigma_v$ is right veering and $\Sigma_{v'}$ is left veering.
By \Cref{lemma:diskcomposition} (respectively~\eqref{eq:phi=phi1phi2}), $\phi'$ equals some composition of the $n-1$ $\Phi_{\widetilde{v}}$---the partial monodromies extended to $\Sigma_{A'}$---where $\widetilde{v}$ are the vertices of $A'$. As the plumbing regions are disjoint, this means $\phi$ is left-veering in $P$ (since $\Phi_{v'}$ is left-veering in $P$).
Thus, if $a\in \mathcal{A}(\Sigma_A)$ is fixed, the assumptions of \cref{crit} are satisfied ($\phi_v$ is right veering, hence in particular it is right-veering in $P$, and $\phi^{-1}$ is right-veering in $P$), and $a\cap \Sigma_{A'}$ and $a\cap \Sigma_{v}$ are collections of fixed isotopy classes of arcs in $\mathcal{A}(\Sigma_A')$ and $\mathcal{A}(\Sigma_v)$, respectively.
By the induction hypothesis, the former give collections $a\cap\Sigma_{\widetilde{v}}$ for all vertices in $A'$ (i.e.~all vertices of $A$ different from $a$), hence, we have that $a \cap \Sigma_v$ is collection of fixed isotopy classes of arcs in $\mathcal{A}(\Sigma_v)$ for all vertices $v$ of $A$.

It remains to observe that if $a$ is a fixed isotopy class of arcs in $\Sigma_{T_\mathrm{max}}$ for $T_\mathrm{max}\in A$, that then, for all vertices of $T_\mathrm{max}$, $a\cap\Sigma_v$ is a collection of fixed isotopy classes of arcs. This is immediate from the fact that its monodromy $\phi$ is a composition of $\Phi_v$ partial maps on $\mathcal{A}\Sigma_{T_\mathrm{max}}$ all of which are right-veering, hence as $a$ is fixed by the composition, it is fixed by each individual $\Phi_v$. The later is equivalent to $a\cap\Sigma_v$ is a collection of fixed isotopy classes of arcs for $\Sigma_v$.

For the `in particular', note that any (isotopy class of an) arc that essentially intersects any of the plumbing regions has the property that it intersects at least one surface in (a collection of isotopy classes of an) arc that is not boundary parallel. Hence, assuming towards a contradiction that there exists a fixed isotopy class of arcs $a\in\mathcal{A}$ that is essential, then $a\cap \Sigma_v$ contains at least one essential fixed isotopy class of arcs for at least one $v$, contradicting the assumption that $\Sigma_v$ has no such fixed isotopy class (as it is strictly right or left-veering). 
\end{proof}

\subsection{Arborescent links are prime}

As an application of \Cref{thm:treeofveeringsurfaces}, we discuss the primeness of arborescent links.
We do so without recalling the construction of arborescent links in detail, as we see this as an illustration of the technique rather than a main result of the text. In particular, since primeness for most arborescent links can be derived from most of them being hyperbolic, as discussed in \cite{Futer_2008}, we do not claim originality but rather a different perspective.
A general reference for the construction of arborescent links (certain links associated with a planar rooted tree with integer labeled vertices) we refer to~\cite[Ch.~12]{bs}.
\emph{Arborescent links} are the links $L_G$ in $S^3$ (well-defined up to ambient isotopy) associated to a plane tree $G$ with non-zero integer labeled vertices, i.e.~a choice of embedding of a finite tree with vertices labeled by elements in~$\Z\setminus \{0\}$. The link $L_G$ can be described by placing an unknotted $k$-twisted band in the plane at each vertex (with $k$ being the label of the corresponding vertex) and essentially plumbing them together whenever they share an edge; compare with~\cite{MR2165205} where this construction is explained, and consult~\cite[Ch.~12]{bs} for a detailed account. Note that the order of plumbing is indicated by the root: one starts with the root vertex and plumbs the other bands iteratively to it.

In the language of Murasugi sums, arborescent links are those arising from essentially plumbing together oriented unknotted annuli (understood as incompressible surfaces in $S^3$) guided by the planar rooted tree as follows.
It is not hard to see that every such $G$ in fact determines a tree $T$ as in the last section, where we chose orientations on the edges pointing away from the root. The corresponding surface $\Sigma_G\subset S^3$ is an incompressible surface (as an iterative Murasugi sum of incompressible surfaces, which all twisted annuli are) with boundary $L_G$. More details in the next paragraph.

The vertices of $T$ are chosen to be open books $(A,S^3)$, where $A$ is the unknotted annulus with $k$ full twists. The edges are chosen according to the edges of $G$. The choice of labeling of the edges is where all the subtlety lies. The fact is that the plane tree $G$ induces a circular order on the edges at a vertex of $G$ (and hence of $T$). We choose the plumbing regions $P_1, \ldots, P_n$ in an annulus $A$ that is the surface of a vertex $v=(A,S^3)$ of $T$ the  we take as the labels of the edges $e_1$ to $e_n$ that contain $v$ to be $P_1, \ldots, P_n$ in any order such that 
the circular order induced by the orientation of $A$ equals the circular order induced by $G$. This process yields an open book well-defined up to equivalence, as the following argument shows. Any other choice of order of the $P_i$ that also respects the circular order described above, amounts to changing the plumbing regions by applying an isotopy to $A$, hence the resulting surface in $S^3$ is the same (up to a canonical isotopy induced by the isotopy of $A$).

    With this setup, the fact that arborescent links are prime is immediate from \Cref{thm:treeofveeringsurfaces}.

\begin{proposition}
\label{prop:arblinksareprime}
All arborescent links are prime.
\end{proposition}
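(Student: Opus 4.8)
The plan is to realize each arborescent link $L_G$ as the boundary of the surface $\Sigma_G = \Sigma_T$ associated with the tree of surfaces $T$ constructed in the paragraph preceding the statement, and then apply \Cref{thm:treeofveeringsurfaces}. First I would recall that each vertex of $T$ is an unknotted annulus $A$ with $k$ full twists, $k \in \Z \setminus \{0\}$, viewed as an incompressible surface in $S^3$; such a $k$-twisted annulus is the fiber surface of a $(k)$-torus link (a connected cabling of Hopf bands), and in particular it is a fiber surface whose monodromy is $T^k$, a $k$-fold power of a Dehn twist along the core. The key observation is that for $k>0$ this monodromy is a product of positive Dehn twists, hence strictly right-veering, and for $k<0$ it is a product of negative Dehn twists, hence strictly left-veering (the cases $k = \pm 1$ being exactly the positive/negative Hopf band from \Cref{Ex:HB}, and larger $|k|$ following since a composition of (strictly) right-veering maps is (strictly) right-veering, as recorded in the proof of \Cref{prop:rightright}). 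Since $k \neq 0$ is imposed, every vertex surface is strictly right-veering or strictly left-veering.

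Next I would check that the Murasugi sums prescribed by $G$ are \emph{essential} in the sense of \Cref{def:murasugissum}: the plumbing regions $P_i$ are chosen in the interior of each annulus so that the arcs $f_1(s_{2j}), f_2(s_{2j-1})$ are cocores of the respective twisted annuli (this is how essential plumbing of bands is set up, cf.~\cite{MR2165205, bs}), and a cocore of a $k$-twisted annulus with $k \neq 0$ is an essential arc — it is non-boundary-parallel precisely because there is a twist. Hence the summing region labeling every edge of $T$ is essential. Moreover the plumbing regions at a common vertex are placed disjointly (distinct bands are attached along disjoint sub-polygons of the annulus, respecting the circular order induced by the plane embedding of $G$), so $T$ satisfies the disjointness hypothesis required to be a tree of surfaces.

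With these verifications in place, $T$ is a tree of incompressible surfaces in which every vertex surface is strictly right-veering or strictly left-veering and every summing region is essential, so \Cref{thm:treeofveeringsurfaces} (the `in particular' clause) applies directly and yields that $\Sigma_G = \Sigma_T$ is irreducible. Finally, since $\Sigma_G$ is an incompressible Seifert surface for the link $L_G \subset S^3$, the remark following the definition of irreducibility — that in $S^3$ an incompressible surface is irreducible if and only if its boundary link is prime — shows $L_G$ is prime, as desired.

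The main obstacle I expect is not conceptual but bookkeeping: one has to pin down precisely that the plumbing regions chosen in the construction of $\Sigma_G$ really are essential summing regions and are pairwise disjoint at each vertex, i.e.~that ``essentially plumbing together twisted annuli'' in the arborescent-link literature translates exactly into the hypotheses ``essential Murasugi sum'' and ``disjoint summing regions'' of \Cref{thm:treeofveeringsurfaces}. The only place genuine content enters is the identification of the monodromy of a $k$-twisted unknotted annulus as $T^{\pm|k|}$ and hence as strictly right- or left-veering; everything else is a direct citation of the tree-guided summing theorem.
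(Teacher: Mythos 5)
Your overall route is the paper's: build the tree $T$ of twisted annuli, check that the summing regions are essential and disjoint, apply \cref{thm:treeofveeringsurfaces} to get irreducibility of $\Sigma_T$, and translate irreducibility of an incompressible Seifert surface in $S^3$ into primeness of its boundary. The gap is exactly in the step you single out as ``the only place genuine content enters'': the claim that the unknotted annulus $A_k$ with $k$ full twists is a fiber surface with monodromy $\tau^k$ (a power of the Dehn twist along the core). This is false for $|k|\geq 2$. If it were true, then (the complement of $A_k$ in $S^3$ being a solid torus, hence irreducible) \cref{rmk:phitopartialopenbook} would make $A_k$ the page of an open book of $S^3$, so $S^3\setminus\nu(\partial A_k)$ would be the mapping torus of $\tau^k$ on the annulus; that mapping torus has fundamental group $\Z^2$ (a Dehn twist about the core acts trivially on $\pi_1$ of the annulus), which forces $\partial A_k$ to be the Hopf link --- impossible, since the two boundary components of $A_k$ have linking number of absolute value $|k|\geq 2$. (Likewise, plumbing Hopf bands produces fiber surfaces of growing first Betti number, never an annulus beyond the single Hopf band.) The true situation for $|k|\geq 2$ is the opposite of what you assert: the partial monodromy of $A_k$ is not defined on any essential arc at all --- in the complementary sutured solid torus the two sutures wind $|k|$ times around, so a product disk would have to meet them in at least $2|k|>2$ points.

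The intermediate statement you need is nevertheless correct, and this is how the paper argues: for $|k|\geq 2$ only boundary-parallel arcs go to the other side, so $A_k$ is vacuously strictly right-veering (and strictly left-veering), while for $k=\pm1$ the vertex is the positive (resp.\ negative) Hopf band of \cref{Ex:HB}, which is strictly right- (resp.\ left-) veering. With that justification substituted for your fibered-annulus claim, the rest of your argument --- cocores of annuli are essential so the plumbings are essential Murasugi sums, the regions at each vertex are disjoint, \cref{thm:treeofveeringsurfaces} gives irreducibility, and irreducibility is equivalent to primeness of the boundary link in $S^3$ --- is correct and coincides with the paper's proof.
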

\begin{proof}Let $L$ be an arborescent link. It is the boundary of the incompressible surface $\Sigma_T\subset M_T=S^3$, where $T$ is as described above. Each of the vertices $v$ consists of an annuls that is strictly right-veering and strictly left veering (only boundary parallel arcs go to the other side. By \Cref{thm:treeofveeringsurfaces}, $\Sigma_T$ is irreducible which (as $M_T=S^3$ is prime) is equivalent to $L$ being a prime link.
\end{proof}

In fact, the same argument works for any planar rooted tree of annuli where none of the annuli are zero-framed and unknotted. Indeed, any such annulus is incompressible, and only boundary parallel arcs go to the other side, and the above proof applies verbatim.

We note that this section is analogous to the corresponding section in~\cite{FLOR}, with the crucial difference that we do \emph{not} assume fiberedness as we no longer need open books in our setup. The reader might notice that in the fibered case, one considers planar trees rather than rooted planar trees. This is due to the fact that the order of plumbing does not change the isotopy type of the resulting surface in the fibered case; this is not so in general, as e.g.~essentially plumbing together a twice twisted band and a thrice twisted band show (yields non-isotopic oriented (!) surfaces) or essentially plumbing together three bands that are twice, thrice, and five-times twisted, respectively.

\section{Strongly quasipositive surfaces are right-veering} \label{sec:sqp}
As a main technical criterion to detect irreducibility of a surface, we discuss the irreducibility of Murasugi sums of right-veering and left-veering surfaces in terms of irreducibility of the summands. 

For our application to Cromwell's conjecture, we in particular need to understand large classes of surfaces (e.g. coming from diagrams) in $S^3$ that are right-veering. One such class are the so-called strongly quasipositive surfaces. Indeed, Seifert surfaces associated with positive link diagrams are strongly quasipositive~\cite{Rudolph}, and the main result of this paper addressing Cromwell's conjecture concerns link diagrams that are built (by Murasugi sums) out of positive and negative link diagrams.

When they are fibered, strongly quasipositive surfaces are known to be right-veering. We extend this result to all strongly quasipositive surfaces in the following proposition. We note that our proof is completely topological, and makes no use of contact topology. Hence, even for the case of fibered surfaces, this is, as far as the authors know, a new strategy to establish right-veeringness. 

\begin{proposition}\label{prop:sqp=>rv}
If $\Sigma$ is a strongly quasipositive surface, then $\Sigma$ is  right-veering, i.e.~if an arc $a$ in $\Sigma$ goes to the other side, then~$a\leq \phi(a)$.
Furthermore, strongly quasipositive surfaces are %
strictly right-veering if and only if they are irreducible.
\end{proposition}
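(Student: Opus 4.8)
The plan is to deduce \Cref{prop:sqp=>rv} from the characterization \Cref{thm:rvnesscrit}. As recalled in the introduction, the class of strongly quasipositive surfaces in $S^3$ is closed under positive Hopf plumbing; hence \Cref{thm:rvnesscrit} (together with \Cref{rmk:soberingarc}) applies and tells us that all strongly quasipositive surfaces are right-veering as soon as we check that no strongly quasipositive surface carries an essential arc $a$ with $\phi(a)$ defined, $\phi(a)<a$, and $\mathrm{int}(a)\cap\phi(a)=\varnothing$ --- in other words, no bad (sobering) arc. So the whole content of the first assertion reduces to ruling out such arcs.

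To rule them out, I would reduce to the fibered case via plumbing. It is standard that every strongly quasipositive surface $\Sigma$ becomes, after finitely many positive Hopf plumbings, the fiber surface $\widehat{\Sigma}$ of a torus link (a Bennequin surface sits inside a torus-link fiber surface with complementary region a union of plumbing regions; cf.~\cite{Rudolph}). By the composition formula (\Cref{lemma:diskcomposition}, in the form of \Cref{rem:diskcomp}) the monodromy of $\widehat\Sigma$ is $\widehat\phi=\Phi_\Sigma\circ\Psi$, where $\Psi$ is the composition of the extensions to $\widehat\Sigma$ of the positive Dehn twists along the cores of the plumbed Hopf bands. On the other hand, the monodromy of the fiber surface of a torus link is --- explicitly, and with no input from contact geometry --- a product of positive Dehn twists, hence right-veering (a positive Dehn twist is right-veering by the computation in \Cref{Ex:HB}, and a composition of right-veering partial self-maps is right-veering). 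Now suppose $\Sigma$ had a bad arc $a$. Choosing the plumbing regions in minimal position with respect to a fixed minimal-position representative of $a$ and tracking $a$ through the plumbings, the newly introduced positive Dehn twists can only move $\phi(a)$ (weakly) further to the right; the disjointness hypothesis $\mathrm{int}(a)\cap\phi(a)=\varnothing$ is exactly what prevents any cancellation, so the image of $a$ in $\widehat\Sigma$ remains essential and strictly left-veering, contradicting right-veeringness of $\widehat\Sigma$. (Equivalently, one may observe that the ``if'' direction of \Cref{thm:rvnesscrit} is already proved by showing a bad arc of a member of a plumbing-closed family survives positive Hopf plumbing, and then iterate the plumbings until one reaches the torus-link fiber surface, producing the same contradiction directly.) Hence no strongly quasipositive surface has a bad arc, and every strongly quasipositive $\Sigma$ is right-veering.

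Finally, the ``furthermore'' is immediate once right-veeringness is in hand: by definition $\Sigma$ is strictly right-veering precisely when it is right-veering and no essential isotopy class of arcs is fixed, and --- given that $\phi(a)\geq a$ for every $a$ in the domain --- the condition ``no essential fixed arc'' is by definition the irreducibility of $\Sigma$. The step I expect to be the main obstacle is the control of a bad arc under positive Hopf plumbing in the second paragraph, i.e.~ensuring that stabilizing by a positive Hopf band can never turn a strictly left-veering sobering arc into a right-veering one without making it boundary-parallel; this is precisely the subtle interaction that \Cref{thm:rvnesscrit} is designed to encapsulate, so modulo that theorem (and the standard embedding of strongly quasipositive surfaces into torus-link fiber surfaces) the proposition becomes a short formality.
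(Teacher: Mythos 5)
Your overall strategy (apply \cref{thm:rvnesscrit} to the plumbing-closed family of strongly quasipositive surfaces, then get the ``furthermore'' from the definitions) is the paper's strategy, and your treatment of the ``furthermore'' is fine. But the way you verify the hypothesis of \cref{thm:rvnesscrit} has a genuine gap. Your reduction rests on the claim that every strongly quasipositive surface $\Sigma$ becomes the fiber surface of a torus link after finitely many \emph{positive Hopf plumbings}. This is not Rudolph's theorem and it is false in general: by Stallings--Gabai, a Murasugi sum is fibered if and only if both summands are, so if a torus-link fiber were obtained from $\Sigma$ by Hopf plumbings, $\Sigma$ would itself be a fiber surface --- yet plenty of strongly quasipositive surfaces are not fibered. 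Rudolph's characterization only gives an embedding of $\Sigma$ as a \emph{full subsurface} of the torus-link fiber; re-attaching the complementary bands is not a sequence of plumbings (even for fibered $\Sigma$ this ``deplumbing'' statement is not available). Moreover, the arc-tracking step is unjustified even where the plumbing picture makes sense: a positive Dehn twist moves the image $\phi(a)$ (weakly) to the \emph{right}, which is exactly what can push it past $a$ and destroy left-veeringness; disjointness of the interiors of $a$ and $\phi(a)$ does not prevent this. Controlling this phenomenon by \emph{choosing} the plumbing arcs carefully is the hard content of the proof of \cref{thm:rvnesscrit}; you cannot get it for free when the plumbing regions are dictated by an ambient embedding, and invoking \cref{thm:rvnesscrit} ``modulo'' this step is circular, since that theorem's statement only converts the hypothesis into right-veeringness --- it does not verify the hypothesis for you.

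The hypothesis can instead be verified in one line, which is what the paper does: strongly quasipositive surfaces are closed under passing to full (essential) subsurfaces, and the unknotted $0$-framed annulus is \emph{not} strongly quasipositive (\cref{Ex:sqpannuli}); hence no strongly quasipositive surface contains an essential unknotted $0$-framed annulus, and \cref{thm:rvnesscrit} applies directly. (Equivalently, in the language of \cref{rmk:soberingarc}: an arc with property \eqref{eq:P} yields exactly such an annulus as a neighborhood of $\alpha\cup\alpha'$, so it cannot exist on a strongly quasipositive surface.) If you replace your second paragraph by this argument, the proof is complete and coincides with the paper's.
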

We provide context for strong quasipositivity, before proving the above result.
Introduced by Rudolph~\cite{zbMATH03826789}, strongly quasipositive surfaces have become interwoven with a large part of low-dimensional topology, due to their connections to complex plane curves~\cite{zbMATH03796828,zbMATH01643188} and contact geometry~\cite{Hedden}.

While originally defined as the surfaces that are naturally associated with a particular braid representation of links~\cite{zbMATH03826789}, we use the following equivalent characterization: a surface $\Sigma$ in $S^3$ is \emph{strongly quasipositive} if, up to isotopy, it arises as a full subsurface of the fiber surface of a positive $(n,n)$-torus link~\cite[Characterization Theorem]{Rudolph_CharacterizationofQPSSConstructions3}. Here, a subsurface $F\subseteq \Sigma$ is said to be \emph{full}\footnote{The notion of full subsurfaces is equivalent to $F$ being a $\pi_1$-injective subsurface. We follow Rudolph and use the term full.}, if every simple closed curve in $F$ that bounds a disk in $\Sigma$ also bounds a disk in~$F$.
Even for annuli (i.e.~$\Sigma\cong S^1\times[0,1]$), the notion of quasipositivity is interesting and related to contact geometry properties of the knot given as their core. In the argument below, we will need the following.

\begin{Example}({\cite{zbMATH00165700}})\label{Ex:sqpannuli}
Let $A\subset S^3$ be an annulus in $S^3$ that is unknotted, i.e.~its core is unknotted. Then $A$ is strongly quasipositive if and only if its framing is strictly negative. Here, the framing $k\in \mathbb{Z}$ of $A$ is defined as minus the linking number of the two boundary components of~$A$.
\end{Example}
    
With these preliminaries on strongly quasipositive surfaces out of the way, we phrase a right-veeringness result for families of surfaces in a fixed $3$-manifold that in particular implies \Cref{prop:sqp=>rv}.

\begin{theorem}\label{thm:rvnesscrit}Let $M$ be a $3$-manifold and let $\mathcal{P}$ be a family of (isotopy classes of) incompressible compact surfaces in $M$ that is closed under positive stabilization. All surfaces in $\mathcal{P}$ are right-veering, if there is no surface in $\mathcal{P}$ that contains an essential unknotted 0-framed annulus.

\end{theorem}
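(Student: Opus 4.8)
The plan is to argue by contraposition: assuming that some $\Sigma_0\in\mathcal P$ is not right-veering, I will produce an essential unknotted $0$-framed annulus inside a surface obtained from $\Sigma_0$ by finitely many positive Hopf plumbings, which by hypothesis still lies in $\mathcal P$. That $\Sigma_0$ is not right-veering provides an essential arc $a$ in some $\Sigma\in\mathcal P$ with $\phi_\Sigma(a)$ defined and $\phi_\Sigma(a)<a$, witnessed by a product disk $D$ with $\partial_0 D=\gamma$ a representative of $a$ on the positive side and $\partial_1 D=\gamma'$ a representative of $\phi_\Sigma(a)$ on the negative side; we may take $\gamma$ and $\gamma'$ in minimal position. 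They share both endpoints $p,q\in\partial\Sigma$, at each of which both arcs point into the interior of $\Sigma$.

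The heart of the argument is the case where $\mathrm{int}(\gamma)\cap\mathrm{int}(\gamma')=\varnothing$ and the circle $\gamma\cup\gamma'$ is non-peripheral in $\Sigma$. In that case I would round the two corners at $p$ and $q$ by pushing them slightly into the interior (possible since the incoming directions of $\gamma$ and of $\gamma'$ at $p$, and at $q$, all lie in the interior of $\Sigma$), obtaining an embedded simple closed curve $C\subset\Sigma$. This $C$ is essential: it is not null-homotopic because $a\neq\phi_\Sigma(a)$ (otherwise $\gamma$ and $\gamma'$ would be isotopic rel endpoints), and it is non-peripheral by the case assumption. A closed annular neighbourhood $A$ of $C$ in $\Sigma$ is then the desired annulus. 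It is unknotted: $C$ is isotopic in $M$ to a rounded copy of $\partial D$, which bounds the embedded disk $D$. It is $0$-framed: the surface framing of $C$ induced by $\Sigma$ agrees with the framing $C$ inherits as $\partial D$ — along $\gamma$ and along $\gamma'$ the derivative $\partial_s\Delta$ in the second coordinate is the positive normal of $\Sigma$, so a $\Sigma$-normal pushoff of $C$ and a $D$-normal pushoff coincide near $\gamma\cup\gamma'$, hence, by orientability of $\Sigma$ and of $D$, all along $C$ — and the latter framing is $0$ since $C$ bounds a disk in $M$. The existence of $A$ in a surface of $\mathcal P$ contradicts the hypothesis (and, as a sanity check, $A$ is not strongly quasipositive by \Cref{Ex:sqpannuli}).

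It remains to reduce the general situation to this core case, by positive Hopf plumbings, in two stages. First, to make $\gamma\cup\gamma'$ non-peripheral: if $\gamma\cup\gamma'$ is parallel to a boundary component $b$ of $\Sigma$, I would plumb a positive Hopf band along an arc based on $b$ that is disjoint from $\gamma\cup\gamma'$ (e.g.\ the cocore of the annulus cobounded by $\gamma\cup\gamma'$ and $b$); since the new positive Dehn twist is supported away from $p$ and from $\gamma\cup\gamma'$, the composition formula \eqref{eq:phi=phi1phi2} shows that $a$ remains essential and strictly left-veering in the new surface, now with $\gamma\cup\gamma'$ non-peripheral (in the degenerate case that $\Sigma$ is itself an annulus this step turns $\Sigma$ into a once-punctured torus, in which one re-runs the search for a left-veering arc). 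The main obstacle is the first stage: arranging $\mathrm{int}(\gamma)\cap\mathrm{int}(\gamma')=\varnothing$. Here I would induct on $n=|\mathrm{int}(\gamma)\cap\mathrm{int}(\gamma')|$; when $n\geq1$, I would perform a positive Hopf plumbing along a short properly embedded arc routed through the part of $\Sigma$ adjacent to an intersection point of $\gamma$ with $\gamma'$ that is outermost along $\gamma'$, arranged so that the resulting positive Dehn twist cancels that intersection, and then check that the modified arc is still essential and still strictly left-veering — the latter because positive Dehn twists are right-veering and the modification can be localized away from the germ of $a$ at $p$. This manipulation is close in spirit and difficulty to the sobering-arc surgeries of Honda--Kazez--Mati\'c and to Wand's reduction arguments (compare \Cref{rmk:soberingarc} and the discussion of \cite{Wand} in the introduction), and it is where I expect essentially all the technical work to lie.

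Finally, \Cref{prop:sqp=>rv} follows at once: the family of strongly quasipositive surfaces is closed under positive Hopf plumbing, and it contains no essential unknotted $0$-framed annulus, because a full subsurface of a strongly quasipositive surface is again strongly quasipositive, whereas an unknotted $0$-framed annulus is not strongly quasipositive (its framing is not strictly negative, cf.\ \Cref{Ex:sqpannuli}); hence \Cref{thm:rvnesscrit} applies. The additional claim that such surfaces are strictly right-veering exactly when irreducible is then immediate, since for a right-veering surface strict right-veeringness means precisely the absence of essential fixed arcs, which is the definition of irreducibility.
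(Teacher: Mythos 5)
Your overall strategy is the same as the paper's (contrapositive; positive Hopf plumbings to reduce to an arc whose image has disjoint interior; a neighborhood of $\gamma\cup\gamma'$ as the annulus), but there is a genuine gap in the core case, and it is exactly the point the paper's property \eqref{eq:P} is designed to handle. Your framing computation asserts that the $\Sigma$-surface framing of the rounded curve $C$ agrees with the framing it inherits from the product disk $D$ because the two pushoffs ``coincide near $\gamma\cup\gamma'$''. They do not: along $\gamma$ the disk leaves $\Sigma$ on the positive side and along $\gamma'$ it returns on the negative side, so the $D$-framing rotates by a half twist relative to the $\Sigma$-framing at each of the two corners $p$ and $q$, contributing $\pm\tfrac12$ there, with sign determined by whether the arc veers left or right at that endpoint. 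These corrections cancel only when the veering at the two endpoints is opposite. If the final configuration is left-veering at both ends (which your reduction never rules out, since you only track left-veering at $q$), the resulting unknotted annulus has framing $\pm1$, not $0$ --- think of the cocore of a negative Hopf band, plumbed into a larger surface to make it non-peripheral. This is fatal for the intended application: $(-1)$-framed unknotted annuli are strongly quasipositive (they are positive Hopf bands), so a $\pm1$-framed annulus yields no contradiction with the hypothesis of \Cref{thm:rvnesscrit}. The paper avoids this by first performing a boundary-parallel positive Hopf plumbing at $p$ to make the arc strictly right-veering there (strict right-veering at $p$ is then automatically preserved, since every subsequent modification post-composes with a positive Dehn twist), and by carrying the mixed-veering condition into \eqref{eq:P}; this step and the resulting cancellation of the two $\pm\tfrac12$ corrections are missing from your argument.

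Second, the stage you defer --- removing interior intersections by plumbings while keeping the arc left-veering at $q$ --- is not a routine induction but the technical heart of the paper's proof, and the naive move you describe (cancel an outermost intersection with one positive twist supported away from the germ at $p$) can fail precisely by flipping the veering at $q$: this happens when the candidate intersection pair sits at the corner of a rectangle with vertex $q$, which is why the paper distinguishes negative and positive intersections, chooses the pair $x,y$ via a nested-rectangle analysis, routes the plumbing arc through a carefully chosen region $S$ (sometimes needing two plumbings when $S$ carries genus), and treats the single-intersection case with a non-separating arc through the genus of $R$. As written, your proposal acknowledges this is where the work lies but does not supply it, so the proof is incomplete there as well; combined with the framing issue above, the argument does not yet establish the theorem. (Your deduction of \Cref{prop:sqp=>rv} from the theorem, by contrast, matches the paper and is fine.)
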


\begin{remark} \label{rmk:soberingarc}
    In fact, as the proof will reveal, we show something slightly stronger about the unknotted $0$-framed annulus that can be phrased in the language of monodromies. Let $\mathcal{P}$ a family as above. Then all surfaces in $\mathcal{P}$ are right-veering, if no element $\Sigma\in \mathcal{P}$ has the property \eqref{eq:P} described below (see also \Cref{fig:sobering_arc}).   \begin{equation}\label{eq:P}\parbox{11.5cm}{
        There exists $a \in \mathcal{A}(\Sigma)$ such that $a$ goes to the other side (i.e.~$\phi(a)$ is defined), $a$ is not fixed (i.e.~$\phi(a)\neq a)$), one endpoint of $a$ is right-veering and the other is left-veering (i.e.~$\phi(a)>a\,\&\,\phi(\overline{a})<\overline{a}$ or~$\phi(a)<a\,\&\,\phi(\overline{a})>\overline{a}$), and $a$ and $\phi(a)$ have representatives $\alpha$ and $\alpha'$ that do not intersect in their interior.
}\end{equation}
\begin{figure}[ht]
    \centering
    \includegraphics[width=0.2\linewidth]{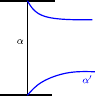}
    \caption{Arcs $\alpha$ (black) and $\alpha'$ (blue) that represent an isotopy class $a=[\alpha]$ and its image $\phi(a)=[\alpha']$, respectively, for an $a$ that satisfies \eqref{eq:P}.}
\label{fig:sobering_arc}
    \end{figure}

    This is, of course, an `if and only if'.
In other words, for a family $\mathcal{P}$ as above, containing at least one surface that is not right-veering is equivalent to containing a surface that satisfies~\eqref{eq:P}, which is of course not right-veering.
    
\end{remark}
An example of such a family are the strongly quasipositive surfaces in~$S^3$; below, we deduce \Cref{prop:sqp=>rv} from \Cref{thm:rvnesscrit}.
While \Cref{prop:sqp=>rv} is the only application of \Cref{thm:rvnesscrit} used in other sections of this paper, we consider \Cref{thm:rvnesscrit} (and the `if and only if' variant provided in~\cref{rmk:soberingarc} and the introduction) of independent interest, as
considering incompressible surfaces up to positive stabilization is rather widespread,
often adding in closedness under the inverse operation, so-called destabilization.
Most celebratedly, for a closed $3$-manifold~$M$, by Giroux correspondence open books of $M$ (i.e.~compact fiber surfaces) up to positive stabilization and destabilization are in 1-to-1 correspondence with isotopy classes of contact structures on~$M$~\cite{zbMATH01789982};
concerning right-veeringness, the families of open books (closed under (de-)stabilizations) that are right-veering correspond exactly to so-called tight contact structures by a celebrated result of Honda, Kazez, and Mati\'{c}~\cite{HKM}.
At the end of this section, we explain how~\cref{thm:rvnesscrit}
does in fact provide an avenue to prove this latter result (and its generalization to partial open books);
see \cref{cor:rvtight}.
Beyond open books, there is further motivation (e.g.~from Heegaard Floer theory) to  study surfaces (that are, in general, not pages of open books) up to positive stabilization;
compare e.g.~Baldwin's recent blog post~\cite{baldwinblog}.

We now return to the proof of the application of \cref{thm:rvnesscrit} needed in \cref{sec:alternative}.
\begin{proof}[Proof of \Cref{prop:sqp=>rv}]
Recall that the family of isotopy classes of strongly quasipositive surfaces in $S^3$ are closed under taking full subsurfaces (this is e.g.~evident from the characterization as the full subsurfaces of positive torus link fibers) and are closed under taking positive stabilizations (since if $\Sigma$ is strongly quasipositive, then so is~$\Sigma\#_PH$, where $H$ is any strongly quasipositive surface, in particular in case $H$ is a positive Hopf band and $P$ is a square---the definition of positive stabilization). Moreover, the unknotted 0-framed annulus in $S^3$ is not strongly quasipositive, compare \Cref{Ex:sqpannuli}. Hence, by \cref{thm:rvnesscrit}, all elements of $\mathcal{P}$ are right-veering.

The `furthermore' part now follows quickly: for a right-veering surface, strict right-veeringness is equivalent to the absence of intervals $a$ such that~$\phi(a)=a$, which in turn is equivalent to irreducibility.
\end{proof}

 \begin{proof}[Proof of \Cref{thm:rvnesscrit}]

    Let $\Sigma$ be a compact incompressible surface in a $3$-manifold~$M$, and suppose that there exists $a\in\A(\Sigma)$ such that~$\phi(\overline{a}) < \overline{a}$. We will perform a series of positive Hopf plumbings until we get an arc that exhibits an essential $0$-framed unknotted annulus. Writing $\widetilde{\Sigma}$ ($\supset \Sigma$) for the surface resulting from the positive Hopf plumbing and $\widetilde{\phi}$ for its monodromy, we note that by \Cref{eq:phi=phi1phi2}
    the new image $\widetilde{\phi}(a)$ of $a$ after each step will be the old one postcomposed with a positive Dehn twist along a curve that is the plumbing arc union the core of the $1$-handle added in the plumbing.
     
     Up to performing a boundary parallel positive Hopf plumbing on one of the endpoints, we can assume that $a$ is right-veering at the starting point, say~$p$, and $\overline{a}$ is left-veering at its starting point, which we denote by~$q$. We look at the sign of the intersections of $\phi(a)$ and $a$ other than $p$ and $q$. Here, choose $\phi(a)$ to have the orientation induced from~$a$, and in the rest of the proof intersections are always understood to be interior intersections, that is neither $p$ nor $q$. We choose representatives, so that we may think of the intersections as explicit points.

     Let $\alpha$ and $\alpha'$ be minimally transversely intersecting representatives of $a$ and~$\phi(a)$.
     Our first aim is to perform Hopf plumbings that inductively remove all negative intersection points, without changing the fact that at~$q$, $\overline{a}$ veers to the left. In other words, each Hopf plumbing we perform is such that an arc $\widetilde{\alpha}'\subset\widetilde{\Sigma}$ representing $\widetilde{\phi}(a)$ (i.e.~the image of $\alpha'$ under the relevant Dehn twist on $\widetilde{\Sigma}$), after it is isotoped to intersect $\alpha\subset \Sigma\subset\widetilde{\Sigma}$ minimally, lies to the correct side of $\alpha$ at $q$: the tangent vector of $\widetilde{\alpha}'$ is to the left of that of $\alpha$.
     Afterwards, when there are only positive intersections left, our second aim is to perform further positive Hopf plumbings that remove all remaining intersections.

     \subsection*{Negative intersections}
     We first assume that there exists at least one negative intersection point~$x \in \alpha \cap \alpha'$. If there are multiple, we choose $x$ as follows.
     In case, the first intersection between $\alpha$ and $\alpha'$ that occurs along $\alpha$ (starting from $p$) is negative, choose this to be $x$. If not, we choose an $x$ such that the intersection point that occurs before it on $\alpha$, which we denote by $y$, is positive. We will later refine this choice when needed.

     We consider the connected component $R$ of $\Sigma\setminus(\alpha\cup\alpha')$ that is to the left of $\alpha$ and $\alpha'$ at~$x$ and discuss two cases depending on whether $R$ contains part of the boundary of $\Sigma$.

     \subsubsection*{$R$ contains part of the boundary of~$\Sigma$}
     
     We consider the arc $\beta$ consisting of two subarcs: the first starts slightly to the right of~$p$, then follows $\alpha'$ until $x$ and the second one is any choice of arc in $R\cup \{x\}$ that starts at $x$ and ends on~$R\cap \partial \Sigma$.
     Using $\beta$ as an arc for positive Hopf plumbing, in the resulting surface $\widetilde{\Sigma}$ with monodromy $\widetilde{\phi}$ we have $\widetilde{\phi}(\overline{a})<\overline{a}$ and the number of negative intersections between $a=[\alpha]\in\A\left(\widetilde{\Sigma}\right)$ and $\widetilde{\phi}(a)$ is less; see \Cref{fig:easy_removal}. In fact, the plumbing removes the intersection corresponding to $x$ and all that are before it on~$\alpha'$
\begin{figure}[ht]
    \centering
    \includegraphics[width=0.5\linewidth]{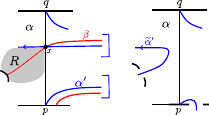}
    \caption{Removing $x$ if $R$ contains part of~$\partial \Sigma$. Right: the arc $\beta$ (red). Right: $\widetilde{\alpha'}$ (blue) is a representative of $\widetilde{\phi}(a)$ with minimal intersection with~$\alpha$. Note that any other intersection points that were between $p$ and $x$ running along $\alpha'$ are have no corresponding intersections between $\alpha$ and $\widetilde{\alpha'}$. We use the convention that square brackets with the same color are identified.}
\label{fig:easy_removal}
    \end{figure}
     
     \subsubsection*{$R$ does not contain part of the boundary of~$\Sigma$}
     Note that then $x$ cannot be the first intersection point (as for such a negative intersection point $x$, $R$ does contain part of the boundary of~$\Sigma$).
     Recall that due to our choice of $x$, the intersection point $y$ that occurs before $x$ (along $\alpha$), is positive by hypothesis.

     In this case we build an arc $\beta$ as follows. Start slightly to the right of $p$ and follow $\alpha'$ until one reaches the intersection point, out of $x$ and $y$, that comes second. Then follow $\alpha$ until the other one is reached, from where in reverse orientation one follows $\alpha'$ back to $p$ and ends slightly to the left of $p$ (on the boundary~$\partial \Sigma$). This involves a unique intersection point between $\beta$ and $\alpha'$, which we choose to be at $x$.

     Plumbing a positive Hopf band along $\beta$ yields a surface with a monodromy $\widetilde{\phi}$ such that $a$ and $\widetilde{\phi}(a)$ have at least one less negative intersection than $a$ and~${\phi}(a)$. Indeed, the intersections corresponding to $x$ and $y$ and all intersections lying between them on $\alpha'$ are removed while one positive intersection is added (stemming from where the last part of $\beta$ hits $\alpha$).
     For visualization, it helps to distinguish cases based on in which order $x$ and $y$ occur on~$\alpha'$;
     see \Cref{fig:removing_negative1,fig:removing_negative2}, respectively. Positive Hopf plumbing along $\beta$ reduces the number of intersections.
     If this positive Hopf plumbing is such that $\overline{a}$ stays left-veering, then we are done with this case.
     
     However, it could be that it becomes right-veering, and we need to pick a slightly different $\beta$, which we discuss in the remainder of the case dealing with negative intersections.
\begin{figure}[b]
    \centering
    \includegraphics[width=0.6\linewidth]{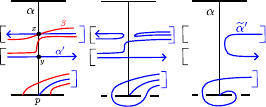}
    \caption{Removing $x$ if $x$ comes before $y$ on~$\alpha'$.}
\label{fig:removing_negative1}
    \end{figure}
\begin{figure}[b]
    \centering
    \includegraphics[width=0.6\linewidth]{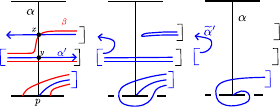}
    \caption{Removing $x$ if $y$ comes before $x$ on~$\alpha'$.}
\label{fig:removing_negative2}
    \end{figure}
In this case, we must have a rectangle with corners $x,y,q$, and another point $w \in \alpha \cap \alpha'$ that comes before $x$ and $y$ in $\alpha'$; see \Cref{fig:BigonCase}. (Since this is exactly what, in the surface obtained by plumbing a positive Hopf band along the above proposed $\beta$, corresponds to the existence of a bigon %
between the new $\widetilde{\alpha}'$ and $\alpha$ that ends at $q$, which is needed for $\overline{a}$ to stop being left-veering).

    \begin{figure}[ht]
    \centering
    \includegraphics[width=0.4\linewidth]{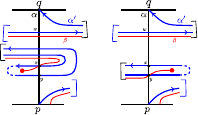}
    \caption{The cases where $\overline{a}$ becomes right-veering if we perform the previous plumbings. The rectangle whose vertices are $x,y,q,w$ could contain more intersection points.}
\label{fig:BigonCase}
    \end{figure}

Here is where we refine our choice of $x$ and $y$: if there is any negative intersection point $x'$ that occurs (on $\alpha$) after a positive intersection point $y'$ that does not feature a rectangle as described above (in other words, constructing $\beta$ using this pair of points rather than an arbitrary $x$ and $y$ leads to a plumbing that preserves left-veeringness at $q$), we choose those as our $x$ and $y$ and argue as above. Hence, we are left with the case that all pairs $x'$ and $y'$ of this type (that is $x'$ is negative, $y'$ is positive and $x'$ occurs just after $y'$ on $\alpha$) features a rectangle as described above. In fact, this means that the corresponding rectangles are nested, and we pick $x$ and $y$ to be the pair that features the rectangle that contains all others; equivalently, on $\alpha'$ the pair $x$ and $y$ occurs between all other such pairs $x'$ and $y'$; see \cref{fig:big_rectangle}.
\begin{figure}[ht]
    \centering
    \includegraphics[width=0.3\linewidth]{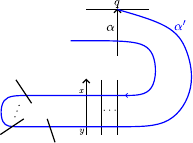}
    \caption{The points $x$ and $y$ are chosen to be the ones that feature the largest of the nested rectangles. The case where $x$ occurs before $y$ on $\alpha'$ is depicted, for the other case, exchange the labels of the two points and the orientation of the segment of $\alpha$ delimited by $x$ and $y$.}
\label{fig:big_rectangle}
    \end{figure}

We next describe an arc $\beta$, and the existence of this (largest) rectangle is exactly what will allow us to conclude that after the plumbing $\overline{a}$ is still left-veering at $q$. For this, let $\alpha'_{x,y}$ be the subsegment of $\alpha'$ delimited by $x$ and $y$.

 We take $\beta$ as before to go along $\alpha'$ until either $x$ or $y$, which ever appears first on~$\alpha'$. We consider $S_1,S_2,\dots, S_n$ to be the $n$ connected components of $\Sigma\setminus (\alpha\cup \alpha')$ that arise to the left of $\alpha'_{x,y}$ when going along $\alpha'_{x,y}$, where $n$ is a positive integer.
 We choose one of these components, and call it $S$, as follows:
 if there exist components $S_i$ for which the part of the boundary that consists of segments of $\alpha'$ is not contained in $\alpha'_{x,y}$, pick $S$ to be the first such component. If such a component does not exist, it turns out that $x$ and $y$ are consecutive intersections not only on $\alpha$ but also on $\alpha'$; in other words $\alpha'_{x,y}$ only intersects $\alpha$ in its end points $x$ and $y$. In particular, there is only one component $S_i$ as above (that is, $n=1$) and we take $S$ to be that component. Before we go on, we argue that indeed such an $S$ exists. This is the content of the next three paragraphs.
 
Assume that there is at least one intersection $z_0\in \alpha\cap\alpha'$ that is in the interior of $\alpha'_{x,y}$, since otherwise, we are in the latter case and we take $S$ to be the unique connected component. We consider the following (unoriented) segment $\alpha_{z_0}$ in $\alpha$: it starts at $z_0$ then follows $\alpha$ to the left of $\alpha'_{x,y}$ and ends at the first intersection with $\alpha'$ that occurs, which we denote by $z_1$. If $z_1$ is not in the interior $\alpha'_{x,y}$, then we find a connected component $S_i$ that in its boundary contains parts of $\alpha'$ that occur before or after $x$ and $y$, as desired; see left-hand side of \cref{fig:z0z1}.
\begin{figure}[ht]
    \centering
    \includegraphics[width=0.5\linewidth]{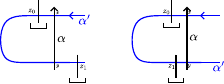}
    \caption{Left: the segment $\alpha_{z_0}$ when $z_1$ is not in the interior of $\alpha'_{x,y}$. Right: the segment $\alpha_{z_0}$ when $z_1$ is in the interior of $\alpha'_{x,y}$.
    As in \cref{fig:big_rectangle}, only the case where $x$ occurs before $y$ on $\alpha'$ is depicted.}
\label{fig:z0z1}
    \end{figure}
Indeed, at least one of the components $S_i$ (or the component, in case they coincide) touching $z_0$ to the left of $\alpha'_{x,y}$ is as desired.

If $z_1$ is in the interior $\alpha'_{x,y}$, note that the sign of $z_1$ has to be the same as that of $z_0$; see right-hand side of \cref{fig:z0z1}. Indeed, if the signs were opposite, we would have found a pair $x'$ and $y'$ (namely, $x'=z_0$ and $y'=z_1$ if the sign of $z_0$ is negative and $x'=z_1$ and $y'=z_0$, otherwise) that occurs on $\alpha'_{x,y}$, which cannot exist by our choice of $x$ and $y$. We consider the following (unoriented) segment $\alpha_{z_1}$ in $\alpha$: it starts at $z_1$ and follows $\alpha$ to the left of $\alpha'_{x,y}$ (in other words, since the sign of $z_0$ and $z_1$ agree, away from $\alpha_{z_0}$) and ends at the first intersection with $\alpha'$, which we denote by $z_2$.

Iteratively, we construct further such segments $\alpha_{z_j}$ as long as $z_j$ lies in the interior of $\alpha'_{x,y}$. At some point we run out of intersections in the interior of $\alpha'_{x,y}$; hence, we will have found a segment $\alpha_{z_j}$ that at $z_j$ starts to the left of  $\alpha'_{x,y}$ but ends at a point $z_{j+1}$ not in the interior of  $\alpha'_{x,y}$, in which case we find a connected component $S_i$ as desired. This concludes the argument that $S$ exists as desired.
 
 We take $\beta$ to further follow $\alpha'_{x,y}$ until $S$ is to the left and then we take it to enter~$S$.
If $S$ has a part of the boundary contained in $\alpha'\setminus\alpha'_{x,y}$, and that part comes before $x$ and $y$ (that is, the region $S$ has a side part of $\alpha'$ that comes before $x$ and $y$), then we take $\beta$ to follow $\alpha'$ back after exiting $S$ through such a part (this is not drawn, for an example see \Cref{fig:Problem_example}). This reduces the number of intersections (and in particular removes $x$). Indeed, all of the intersections before $x$ and $y$ get removed. Then, some of them get added again, but since $\beta$ comes out somewhere that comes before $x$ and $y$, there are fewer of them, and moreover they appear with the same sign as before (potentially we get an extra intersection added coming from an extra intersection point $\beta \cap \alpha$ near $p$, as it happened also in \Cref{fig:removing_negative1,fig:removing_negative2}, but crucially this point will always be positive). Moreover, this plumbing keeps $\overline{a}$ left-veering.
 
If $S$ has a part of the boundary contained in $\alpha'\setminus\alpha'_{x,y}$, but all of those come after $x$ and $y$, we choose $\beta$ to follow $\alpha'$ forward after exiting $S$ through such a part until we end slightly to the left of $q$, see \Cref{fig:Problem_second_example}. Note that we do indeed end to the left of $q$ because to the right we have the rectangle that did not allow us to do our original strategy. Then, all intersections before $x$ and $y$ get removed. Now, as in the previous case, some of them get added again, but not all of them (and not new ones). Moreover, they appear with the same sign as before.

Finally, if all parts of the boundary of $S$ contained in $\alpha'$ are contained in $\alpha'_{x,y}$, then our $S$ is not a disc (since a disc would be a bigon as its boundary consist of two segments, one contained in $\alpha$ and the other, namely $\alpha'_{x,y}$, is contained in $\alpha'$). It is a once punctured genus $g\geq 1$ surface, and we indicate the existence of topology by a red circle, which $\beta$ enters; see \cref{fig:BigonCase} for an example. %

In this case, we can do two plumbings to keep the same number of intersections but now the region is of one of the previous two types; see \Cref{fig:In_between}. %

    \begin{figure}[ht]
    \centering
    \includegraphics[width=0.5\linewidth]{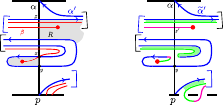}
    \caption{On the left, the arc $\beta$, which, after intersecting $x$, comes out in a region with a vertex $z$ that comes before $x$ and $y$ along $\alpha'$. On the right, the green bigon exhibits that all intersection points up to and including $x$ are removed. Then, the purple segment adds every intersection point up to and including $z$ (labeled $z'$ in $\widetilde{\alpha'}$) and by construction all of these have the same signs as the original ones. Nothing changes after this, and in particular $\overline{a}$ is still left-veering.}
\label{fig:Problem_example}
    \end{figure}

    \begin{figure}[ht]
    \centering
    \includegraphics[width=0.5\linewidth]{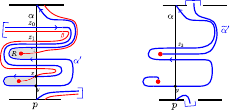}
    \caption{Left, the plumbing arc $\beta$. Right, all the intersection points before $x$ and $y$ get removed, and only the ones between $z_0$ (excluded) and $z_1$ (included and labeled $z'_1$ in $\widetilde{\alpha'}$) get added back (in the figure only $z_1'$ is shown, but there could be more), and with the same sign. Moreover, $\overline{a}$ stays left-veering.}
\label{fig:Problem_second_example}
    \end{figure}

    \begin{figure}[ht]
    \centering
    \includegraphics[width=0.7\linewidth]{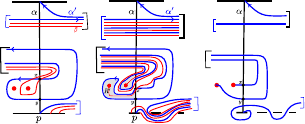}
    \caption{Doing two plumbings when all sides of $S$ are between $x$ and $y$ to reduce to one of the previous cases (at the expense of an extra positive intersection point near $p$). Note that the second plumbing is also one of the previous cases, as the region $R'$ has a side that comes between $x'$ and $y$.}
\label{fig:In_between}
    \end{figure}

     \subsection*{Positive intersections}
     By the above construction, after sufficiently many positive Hopf plumbings, for the rest of the proof we may and do assume that all intersections between $a$ and $\phi(a)$ are positive. We will inductively reduce the number of those intersections until we have none left, without changing the fact that at~$q$, $\overline{a}$ veers to the left.

Suppose all intersections are positive. Let $y_1$ be the closest point to $q$ (along $\alpha$). Then, the region $R_1$ which is to the left of both $\alpha$ and $\alpha'$ at $y_1$ contains a part of the boundary. Take a plumbing along an arc $\beta$ as follows. Start slightly to the right of $p$, follow $\alpha'$ until right before $y_1$, then enter $R_1$ by intersecting $\alpha'$ and go to the boundary. If $y_1$ is not the first intersection point running along $\alpha'$, this removes all previous intersection points. If it is, the intersections are unchanged but now if we consider the previous point $y_2$ (along $\alpha$), the associated region $R_2$ (after the plumbing) which is to the left of $\alpha$ and $\widetilde{\alpha}'$ at $y_2$
contains a part of the boundary, and $y_2$ is not the first point we encounter running along $\alpha'$, so we can repeat. See \Cref{fig:removing_positive} for an illustration. Note that since $y_1$ is not removed, in particular $\alpha$ stays left-veering at $q$.

    \begin{figure}[ht]
    \centering
    \includegraphics[width=0.6\linewidth]{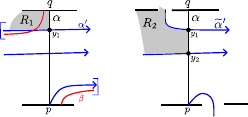}
    \caption{The plumbing along $\beta$ removes all intersection points that come before $y_1$ along $\alpha'$ and moreover ensures that the region $R_2$ now contains part of the boundary.}
\label{fig:removing_positive}
    \end{figure}

    Thus the only case to that remains to be discussed is when there is a unique intersection point $y$. We consider the region $R$ that lies to the left of $\alpha'$ and to the right of $\alpha$ at $y$. If $R$ does not have only one puncture, then $R = \Sigma \setminus (\alpha \cup \alpha')$, in particular it contains the boundary and we are done, as the plumbing strategy when the region contains part of the boundary (see \Cref{fig:easy_removal} for an illustration) works in the same way for a positive intersection.
    If it does, then $R$ is a genus $g\geq 1$ surface with one puncture. Then we take $\beta$ to be an arc starting slightly to the right of~$p$, following $\alpha'$ up to~$y$, and then following an arc in $R\cup \{y\}$ that starts at $y$ and ends slightly to the right of~$q$, such that $\beta \cap \Sigma_{g,1}$ is an essential non-separating arc, see \Cref{fig:subsurface}. The fact that $\beta \cap \Sigma_{g,1}$ is essential ensures that $\overline{a}$ is still left-veering for the new monodromy~$\widetilde{\phi}$, and the fact that it is nonseparating ensures that the new region $R$ contains the boundary, so we have reduced to the previous case and we are done.
    
            \begin{figure}[ht]
    \centering
    \includegraphics[width=0.6\linewidth]{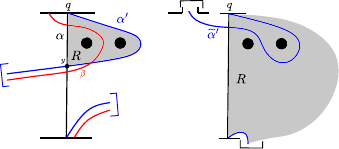}
    \caption{The case where there is a unique (positive) intersection point, and $R$ has one puncture and does not contain parts of the boundary of~$\partial \Sigma$. The two black circles represent one of the genera in~$R$, so that $\beta \cap R$ is non-separating. On the top right, we perform the positive Hopf plumbing, which can be seen on the bottom to remove all intersections except the last one, and due to our choice of plumbing, the new $R$ is such that $\mathrm{closure}(R)$ has only one puncture and in fact it contains parts of~$\partial \Sigma$.
    }
\label{fig:subsurface}
    \end{figure}

    \subsection*{No intersections}
    Therefore, by performing Hopf plumbings, we obtain an arc $a$ that does not intersect $\phi(a)$ apart from their common end points; in other words, we found $a$ that satisfies \eqref{eq:P}. This in particular gives us a $0$-framed unknotted annulus. Indeed, take $\gamma, \gamma'$ representatives of $a, \phi(a)$ respectively, as in \eqref{eq:P}. The desired annulus $A$ is a neighborhood of~$\gamma \cup \gamma'$. It is clearly unknotted and its framing is equal to the framing of $A_D$, as we discuss in detail in the next paragraph.

    Let $D$ be the product disk $D$ bringing $\gamma$ to~$\gamma'$. Consider the annulus $A_D$ given by restricting the normal bundle of $D$ to a simple closed curve; see \Cref{fig:A_D}.
\begin{figure}[ht]
    \centering
    \includegraphics[width=0.3\linewidth]{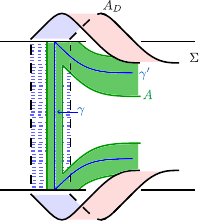}
    \caption{The surface $\Sigma$ and the annuli $A$ and~$A_D$, for minimally intersecting representatives $\gamma$ and $ \gamma'$ of $a$ and~$ \phi(a)$.}
\label{fig:A_D}
    \end{figure}
    Its core is isotopic to the core of~$A$, and their framing agree, by virtue of $A$ and $A_D$ being parallel away from a neighborhood of the start and endpoint of~$a$, up to a correction of $\pm\frac{1}{2}$ at the start and endpoint.
    Since  ${a}<\phi({a})$ and $\bar{a}>\phi(\bar{a})$ (or vice versa) the difference in framing at the start and endpoint is opposite and hence $A_D$ and $A$ have the same framing; see again \Cref{fig:A_D}.
\end{proof}

As hinted at above, the proof of \Cref{thm:rvnesscrit} gives some further contact geometric insight. The arc $a$ with property \eqref{eq:P} we obtain in the proof is a special case of a \emph{sobering arc} in the sense of Goodman~\cite{Goodman}. In particular, the existence of such an arc in a (possibly partial) open book shows that the contact structure that it supports is overtwisted.\footnote{Goodman constructs from his sobering arcs surfaces that violate Bennequin's inequality for tight contact structures. Due to the lack of interior intersections in the arc we construct, the resulting surface is already a disk, i.e.~an overtwisted disk.} Thus the proof of \Cref{thm:rvnesscrit} also recovers the following characterization of tight contact structures, originally due to Honda, Kazez, and Mati\'{c} \cite{HKM, HKMPartial}. We note that our proof bypasses the need for bypasses and Giroux correspondence; indeed, it is completely topological up until the point where we find the sobering arc and apply Goodman's construction to obtain the overtwisted disk. 

\begin{corollary} [\cite{HKM, HKMPartial}] \label{cor:rvtight}
    
    Let $(M,\xi)$ be a contact manifold, either closed or compact with convex boundary. Then $(M,\xi)$ is tight if and only if all its supporting (possibly partial) open book decompositions are right-veering. 
\end{corollary}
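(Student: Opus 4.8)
The plan is to prove the two implications of \cref{cor:rvtight} separately, running the closed case and the convex-boundary case in parallel (in the latter, ``open book'' is read as ``partial open book''). The forward implication ``$(M,\xi)$ tight $\Rightarrow$ all supporting open books are right-veering'' is the new content and is obtained, in contrapositive form, directly from the proof of \cref{thm:rvnesscrit} together with Goodman's overtwistedness criterion; the converse will be recovered from the classical fact that overtwisted contact structures are supported by negatively stabilized (partial) open books.

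\textbf{Tight implies right-veering.} Suppose $(M,\xi)$ admits a supporting (possibly partial) open book whose page $\Sigma$ is not right-veering, i.e.\ there is $a\in\A(\Sigma)$ with $\phi(\overline a)<\overline a$. This is precisely the hypothesis from which the proof of \cref{thm:rvnesscrit} starts, so I would run that argument verbatim: after finitely many positive Hopf plumbings along suitable arcs in the page one reaches a surface $\widetilde\Sigma$ carrying an arc with property~\eqref{eq:P}. Two remarks then finish this direction. First, positive Hopf plumbing of the page of a (partial) open book is a positive (Giroux) stabilization, which does not change the supported contact structure; hence $\widetilde\Sigma$ is again the page of a supporting (partial) open book of $(M,\xi)$. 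Second, an arc with property~\eqref{eq:P} is a sobering arc in the sense of Goodman~\cite{Goodman}, in fact an especially simple one: since its interior is disjoint from the interior of its monodromy image, Goodman's construction yields an embedded overtwisted disk outright rather than merely a surface violating Bennequin's inequality. Therefore $\xi$ is overtwisted, hence not tight, and taking the contrapositive proves the implication.

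\textbf{Right-veering implies tight.} Conversely, assume $\xi$ is overtwisted; the goal is to exhibit a non-right-veering supporting (partial) open book. Start from any supporting (partial) open book $(\Sigma,\varphi)$ of $\xi$, which exists by Giroux's existence theorem (respectively its relative version for convex boundary). Negatively stabilize it: plumb a \emph{negative} Hopf band along a boundary-parallel arc in the page, obtaining $(\Sigma',\varphi')$. The cocore of the new $1$-handle is then an arc on which the monodromy is a left-handed Dehn twist along a curve meeting it once, so it veers strictly to the left; thus $(\Sigma',\varphi')$ is not right-veering. On the other hand, a negative stabilization always supports an overtwisted contact structure, and a stabilization of either sign does not change the homotopy class of the underlying (rel boundary) plane field; hence the contact structure supported by $(\Sigma',\varphi')$ is overtwisted and homotopic to $\xi$, so isotopic to $\xi$ by Eliashberg's classification of overtwisted contact structures (and its relative version). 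Therefore $(\Sigma',\varphi')$ is a non-right-veering supporting (partial) open book of $(M,\xi)$, which is the contrapositive of the claim.

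\textbf{Main obstacle.} The substantive step --- passing from ``not right-veering'' to ``contains a sobering arc after positive stabilization'' --- is already carried out in \cref{thm:rvnesscrit} and is purely three-dimensional topology; what remains is bookkeeping. One must check that the proof of \cref{thm:rvnesscrit}, stated there for an arbitrary incompressible surface, applies without change when $\Sigma$ is the page of a (partial) open book; that positive Hopf plumbing of a page coincides with positive stabilization and leaves the supported contact structure unchanged; and that Goodman's sobering-arc criterion is available in the partial open book setting (cf.~\cite{Goodman, HKM, HKMPartial, EtguOzbaugci}). I expect the most delicate of these to be the partial version of Goodman's criterion and the compatibility of the various notions of ``right-veering'' in \cite{HKM, HKMPartial} with the one used here. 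In the converse direction the single input we do not reprove is Eliashberg's classification of overtwisted contact structures; this is precisely the ingredient that, in our approach, replaces the Giroux correspondence and the bypass machinery used in the original proof.
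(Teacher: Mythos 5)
Your forward direction (tight $\Rightarrow$ right-veering, argued contrapositively via the positive stabilizations of \cref{thm:rvnesscrit}, property~\eqref{eq:P}, and Goodman's sobering-arc construction of an overtwisted disk) is exactly the paper's argument, including the remark that positive Hopf plumbing agrees with positive stabilization of (partial) open books and so preserves the supported contact structure.

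The converse direction, however, has a genuine gap. You negatively stabilize an arbitrary supporting open book of $\xi$ and claim that ``a stabilization of either sign does not change the homotopy class of the underlying plane field,'' concluding via Eliashberg that the stabilized open book still supports $\xi$. This is false for negative stabilization: negatively stabilizing replaces the supported structure $\xi$ by $\xi\#\xi_1$, where $\xi_1$ is the overtwisted structure on $S^3$ supported by the left-handed Hopf band, and the $d_3$-invariant shifts ($d_3(\xi\#\xi_1)=d_3(\xi)+d_3(\xi_1)+\tfrac12\neq d_3(\xi)$). So the homotopy class of the plane field does change, Eliashberg's classification does not apply to identify the new structure with $\xi$, and your stabilized open book is in general \emph{not} a supporting open book of $(M,\xi)$. (Indeed, if negative stabilization preserved the homotopy class, this half of the Honda--Kazez--Mati\'c theorem would be a triviality, which it is not.) The correct statement, used in the paper following \cite{HKM,Goodman}, is that an overtwisted $\xi$ \emph{admits some} supporting open book which arises as a negative stabilization; producing it requires a more careful use of Eliashberg's classification (one must hit the right homotopy class of plane fields, e.g.\ by choosing the open book one stabilizes appropriately), rather than stabilizing an arbitrary open book of $\xi$. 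Note also that for the compact case with convex boundary the paper does not invoke a relative Eliashberg-type classification at all: it localizes, showing that a neighborhood of an overtwisted disk admits a non-right-veering partial open book \cite{HKMPartial}, which is a cleaner route than the relative version of your argument.
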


\begin{proof}
    Suppose $(M,\xi)$ is supported by a (partial) open book that contains an arc that is not right-veering. Then, our sequence of positive Hopf plumbings from \Cref{thm:rvnesscrit} gives a left-veering arc with no interior intersections (see \cref{rmk:soberingarc}). As discussed above such a so-called sobering arc provides an overtwisted disk in~$(M,\xi)$. We note that in the partial open book case, positive Hopf plumbings (together with our \Cref{lemma:diskcomposition}) coincide with the definition of positive stabilization given in \cite{HKMPartial}, which preserves the isotopy class of the contact structure.
    
    The other implication is easier and relies (in the closed case) on Eliashberg's classification of overtwisted contact structures \cite{Eliashberg}. Indeed it is done in the same way in both \cite{HKM} and \cite{Goodman}: if $(M,\xi)$ is overtwisted, it admits an open book which is obtained as a negative Hopf plumbing, and the cocore of the handle of this plumbing is clearly not right-veering.
    
    For the non-closed case, it suffices to show that a neighborhood of an overtwisted disk admits a partial open book that is not right-veering; see \cite{HKMPartial}.
\end{proof}

\begin{remark}\label{rem:wand}
    A proof of \Cref{cor:rvtight} with a similar construction has been done in the open book case by Wand \cite[Observation 4.6]{Wand}. We find it prudent to still include the proof above, as Wand's argument uses a different setup than \cref{thm:rvnesscrit} and came from a rather different motivation and perspective than our \cref{thm:rvnesscrit}.
\end{remark}

\section{Alternative links are visually prime}\label{sec:alternative}
This section is devoted to the proof of our main theorem.
Let us start by a brief review of Seifert's algorithm and alternative links.
Seifert's well-known algorithm \cite{Seifert_35} associates a Seifert surface $\Sigma_D$ with a link diagram $D$ as follows: one starts by resolving the crossings of $D$ in the oriented way, producing the so-called \emph{Seifert circles} in $\mathbb{R}^2\times\{0\}\subset\mathbb{R}^3$. Then one picks disjoint so-called \emph{Seifert disks} lying properly in 
$\mathbb{R}^2 \times [0,\infty) \subset \mathbb{R}^3$ whose boundaries are the Seifert circles.
Now $\Sigma_D$ is constructed as the union of those disks, and one twisted band (with core in $\mathbb{R}^2\times\{0\}$) for each crossing of~$D$.

We call a Seifert circle $C$ \emph{non-separating} (alternatively called type I) if all the crossings adjacent to $C$ are on the same side of~$C$.
Now let $C$ be a separating Seifert circle (alternatively called type~II).
One can form a new link diagram $D_e$ from $D$ by deleting the inside of $C$ (and smoothing all crossings adjacent to $C$ on the inside). Similarly, one forms $D_i$ from $D$ by deleting the outside of~$C$.
Consider the sphere $S$ formed by the Seifert disk $B$ of $C$ glued along $C$ to the reflection to $B$ across~$\mathbb{R}^2\times\{0\}$.
The complement $\mathbb{R}^3\setminus S$ has two components. One observes that $(\Sigma_D \cap \text{unbounded component}) \cup B$ equals $\Sigma_{D_e}$, and $(\Sigma_D \cap \text{bounded component}) \cup B$ equals~$\Sigma_{D_i}$, if the Seifert disks for $D_u$ and $D_b$ are picked appropriately.
In fact, one sees that 
$\Sigma_D$ is the Murasugi sum of $\Sigma_{D_e}$ and $\Sigma_{D_i}$ along a polygon~$P$ contained in~$B$ (this is a frequently used fact, see e.g.~\cite[Theorem 1]{zbMATH04122845}).
The number of sides of $P$ is given by how often one switches between crossings adjacent on the one side and on the other of $C$ when going once around~$C$.
Decomposing the surface $\Sigma_D$ in this way as Murasugi sum, simultaneously along all its separating Seifert circles, naturally leads to the following.
\begin{definition}\label{def:td}
Let $D$ be a \emph{non-split} link diagram, i.e.~one for which the immersed circles in the plane (the diagram without the crossing information) is connected. We define a tree $T(D)$ of surfaces
associated with $D$ (cf.~\cref{sec:trees}).
Denote by $\mathcal{C}\subset\mathbb{R}^2$ the union of all Seifert circles of~$D$.
Then, for every component $v$ of $\mathbb{R}^2\setminus \mathcal{C}$ containing crossings of~$D$,
let $T(D)$ have a vertex $(S^3, \Sigma_{D_v})$, where $D_v$ is the link diagram obtained from $D$ by deleting the other side of all boundary circles of~$v$.
For every separating Seifert circle $C$ separating components $v$ and $w$, let $T(D)$ have an edge between the vertices corresponding to $v$ and $w$. The edge label is the summing region contained in the Seifert disk of~$C$. The edge is oriented from $v$ to $w$ (or oppositely) if $v$ lies to the right (or left) of $C$ when facing in the direction of the orientation of $C$.
\end{definition}
By construction, we have that
for all non-split diagrams~$D$, the Seifert surfaces
$\Sigma_D$ and $\Sigma_{T(D)}$ are isotopic.

Now, a non-split link diagram $D$ is called \emph{alternative} if all of the $D_v$ are alternating; and a split diagram is alternative if all of its split components are.
Note that since $D_v$ is special (i.e.~all of its Seifert circles are non-separating) and non-split, $D_v$ is alternating if and only if it is positive or negative.
This definition of alternative is equivalent to the original definition~\cite[Def.~9.1]{zbMATH03854015}.
For alternative $D$, the $\Sigma_{D_v}$ are genus-minimizing Seifert surfaces~\cite[Thm.~9.3]{zbMATH03854015}, and in particular incompressible. So $T(D)$ is a tree of incompressible surfaces. We are going to use that in the upcoming proof of the following. %

\begin{theorem}
\label{thm:alternativevp}
Let $D$ be an alternative diagram for a link~$L$. If $L$ is not prime, then $D$ features an honest decomposition circle.
\end{theorem}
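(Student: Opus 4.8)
The plan is to feed the tree of incompressible surfaces $T(D)$ of \cref{def:td} into \cref{thm:treeofveeringsurfaces}, and then to translate its output --- a fixed arc whose restriction to every vertex surface is a union of fixed arcs --- into an honest decomposition circle of the planar diagram. First I would reduce to the case that $D$ is non-split (a split diagram displays its splitting directly, and one then applies the non-split case to each split component). For non-split $D$, each vertex diagram $D_v$ is connected and special, hence --- being special and non-split --- positive or negative; by \cite{Rudolph} its Seifert surface is strongly quasipositive (after mirroring if $D_v$ is negative), so by \cref{prop:sqp=>rv} the surface $\Sigma_{D_v}$ is right-veering when $D_v$ is positive and left-veering when $D_v$ is negative. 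Since for alternative diagrams the $\Sigma_{D_v}$ are genus-minimizing, hence incompressible, $T(D)$ is a tree of incompressible surfaces all of whose vertices are right- or left-veering, so \cref{thm:treeofveeringsurfaces} applies.

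By the discussion of fixed arcs and primeness of links in \cref{sec:Prelims}, $L$ being non-prime is equivalent to $\Sigma_D=\Sigma_{T(D)}$ being reducible, i.e.\ to the existence of an essential fixed isotopy class of arcs $a\in\A(\Sigma_{T(D)})$. By \cref{thm:treeofveeringsurfaces}, for every vertex $v$ the restriction $a\cap\Sigma_{D_v}$ is a (possibly empty) collection of fixed isotopy classes of arcs of $\Sigma_{D_v}$. I would then split into two cases. If some component of some $a\cap\Sigma_{D_v}$ is essential in $\Sigma_{D_v}$, then $\Sigma_{D_v}$ is reducible, so the special diagram $D_v$ presents a non-prime link; producing a visible honest decomposition circle for $D_v$ and isotoping it clear of the Seifert disks that were capped off in passing from $D$ to $D_v$ yields an honest decomposition circle of $D$. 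If instead every $a\cap\Sigma_{D_v}$ consists only of boundary-parallel arcs while $a$ itself is essential, then $a$ necessarily crosses some summing region --- the Seifert disk of a separating Seifert circle $C$ --- essentially; using \cref{cor:fixedarcinMS} together with the parallelism of the two boundary arcs of the witnessing product disk inside every $\Sigma_{D_v}$, the product disk can be doubled to an embedded $2$-sphere meeting $\R^2\times\{0\}$ in a single circle situated near $C$ that crosses $D$ in exactly two points, with crossings on both sides since every vertex of $T(D)$ contains crossings by construction.

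The step I expect to be the main obstacle is the base case: showing that a connected special positive (equivalently, positive special alternating) diagram of a non-prime link carries a visible honest decomposition circle. Here one cannot merely invoke strong quasipositivity --- one must use the concrete planar picture of a positive diagram to put the connected-sum sphere coming from the essential fixed arc of $\Sigma_{D_v}$ into a position compatible with the projection to $\R^2\times\{0\}$, so that its trace in the plane is a single curve meeting $D_v$ in exactly two points; this is a Menasco-type normal-position argument \cite{Menasco_84} adapted to special diagrams, which I would try to replace by expressing $\Sigma_{D_v}$ itself as an iterated Murasugi sum of twisted bands guided by the Seifert graph (in the spirit of \cref{prop:arblinksareprime}) and locating a bigon summing region forced by non-primeness. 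A secondary, bookkeeping-heavy point is to check in the first case that such a decomposition circle in $D_v$ survives the regluing of the pieces into $D$ without losing crossings on either side.
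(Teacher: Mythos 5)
Your overall architecture (tree $T(D)$ of incompressible surfaces, right-/left-veeringness of the vertex surfaces via \cref{prop:sqp=>rv}, and \cref{thm:treeofveeringsurfaces} applied to an essential fixed arc $a$) is the same as the paper's, but there is a genuine gap exactly at the step you label ``secondary, bookkeeping-heavy''. In your Case A you use the global arc $a$ only to conclude that $D_v$ is non-prime, then discard it and try to transfer \emph{some} honest decomposition circle of $D_v$ back to $D$. The circle supplied by visual primeness of $D_v$ comes with no control: its two intersection points with $D_v$ necessarily lie on a single Seifert circle of $D_v$, and nothing prevents that circle from being one of the separating Seifert circles of $D$ bounding $v$, so that the corresponding fixed arc of $\Sigma_{D_v}$ runs through the very Seifert disk containing a summing region $P$. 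In that situation it cannot be ``isotoped clear of the capped disks''; an arc fixed in $\Sigma_{D_v}$ is then not automatically fixed in $\Sigma_D$ (by \cref{cor:fixedarcinMS} one also needs it to be fixed in the neighbouring summand $\Sigma_{v'}$), and honesty does not transfer either: regluing the other Murasugi summands changes the tangle on one side of the sphere by a nontrivial tangle replacement, so nontriviality for $L_v$ does not formally give nontriviality for $L$. This is precisely where the paper has to work: it keeps $a$ in play, uses \cref{lem:specialaltvp} only to decompose $\Sigma_v$ as a boundary connected sum along arcs $s_j$ in Seifert disks, applies \cref{prop:connectsumcrit} to show the essential fixed component $a'$ cuts out a disk $\mathfrak{D}$ together with some $s_j$, and, when $a'$ meets a summing region $P$, constructs an arc $b'\subset\mathfrak{D}$ inside the Seifert disk that is fixed in $\Sigma_v$ (via \cref{rmk:phitopartialopenbook}) \emph{and} fixed in $\Sigma_{v'}$ (because the component $b$ of $a\cap\Sigma_{v'}$ is fixed by \cref{thm:treeofveeringsurfaces} and $b'$ differs from $b$ by a boundary isotopy), hence fixed in all of $\Sigma$ and contained in a single Seifert disk. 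Without retaining $a$ and the neighbouring vertex, your transfer cannot be carried out; even in the favourable subcase where the circle avoids all summing regions you still owe the (short) argument that an arc fixed in $\Sigma_{D_v}$ and disjoint from the summing regions is fixed and still essential in $\Sigma_D$.

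Two further remarks. The step you single out as the main obstacle---visual primeness of connected special positive (equivalently special alternating) diagrams---is not reproved in the paper: it is quotable as a special case of Menasco and of Ozawa, or via the algebraic argument with symmetrized Seifert forms, and is imported as \cref{lem:specialaltvp}; so your effort is misallocated, with the true difficulty sitting in the transfer step above. Finally, your Case B is not needed in the paper's treatment (an essential fixed arc has an essential restriction component at some vertex, as in the ``in particular'' of \cref{crit}, since the summing regions are essential), and your sketch for it---doubling the product disk to a sphere ``situated near $C$'' meeting $D$ in two points---is asserted rather than argued; as written it is not a proof.
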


Let us make the notions appearing in the theorem precise.
A \emph{decomposition circle} of a link diagram $D$ is a circle intersecting $D$ transversely in two non-crossing points $x$,~$y$, such that on both sides of the circle, the subarc of $D$ connecting $x$,~$y$ has crossings. A \emph{decomposition sphere} of a link $L$ is a sphere intersecting $L$ transversely in two points~$x$,~$y$, such that on both sides of the sphere, there is no isotopy of $L$ moving the subarc of $L$ connecting $x$,~$y$ into the sphere.
The link $L$ is called \emph{prime} if it admits no decomposition sphere.
We call a decomposition circle \emph{honest} if it gives rise to a decomposition sphere.

Since alternating and positive links are in particular alternative,
we recover Menasco's and Ozawa's respective results:

\begin{corollary}[{Alternating links and positive links are visually prime~\cite{Menasco_84,zbMATH01807859}}]\label{cor:poslinksarevisprime}

Let $D$ be a positive or alternating diagram for a link~$L$. If $L$ is not prime, then $D$ features an honest decomposition circle.
\end{corollary}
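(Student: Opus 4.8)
The plan is to combine the Murasugi–sum decomposition of $\Sigma_D$ along the separating Seifert circles of $D$ (encoded by the tree $T(D)$ of \cref{def:td}) with the right-veeringness of strongly quasipositive surfaces (\cref{prop:sqp=>rv}) and the fixed-arc analysis of tree-guided Murasugi sums (\cref{thm:treeofveeringsurfaces}). First I would make the usual harmless reductions: we may assume that $D$ is \emph{reduced} (no nugatory crossings — remove them by Reidemeister~I moves, which changes neither $L$ nor the existence of honest decomposition circles), and that $D$ is \emph{non-split}, since a split diagram is alternative exactly when each of its split components is, and an honest decomposition circle of a split component remains one of $D$. Then form $T(D)$: its vertices are the pairs $(S^3,\Sigma_{D_v})$ where $D_v$ is a non-split special diagram, hence alternating and therefore positive or negative. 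By Rudolph's theorem~\cite{Rudolph} the Seifert surface of a positive diagram is strongly quasipositive, so by \cref{prop:sqp=>rv} each $\Sigma_{D_v}$ with $D_v$ positive is right-veering, and by taking mirrors each $\Sigma_{D_v}$ with $D_v$ negative is left-veering; moreover each $\Sigma_{D_v}$ is genus-minimizing, hence incompressible. Thus $T(D)$ is a tree of incompressible right- or left-veering surfaces with $\Sigma_D\cong\Sigma_{T(D)}$, and — using that $D$ is reduced and alternating — all of its summing regions (Seifert disks of separating Seifert circles) are essential.

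Next I would set up the dichotomy. Since $M_{T(D)}=S^3$ is prime, $L=\partial\Sigma_D$ is non-prime if and only if $\Sigma_D$ is reducible, i.e.\ carries an essential fixed (and, in $S^3$, separating) isotopy class of arcs. Suppose for contradiction that every $\Sigma_{D_v}$ is irreducible; then by the `furthermore' part of \cref{prop:sqp=>rv} every $\Sigma_{D_v}$ is strictly right- or strictly left-veering, so by the `in particular' of \cref{thm:treeofveeringsurfaces} (applicable since all summing regions are essential) the surface $\Sigma_{T(D)}=\Sigma_D$ is irreducible, contradicting non-primeness of $L$. Hence some vertex $v_0$ has $\Sigma_{D_{v_0}}$ reducible, i.e.\ $L_{v_0}:=\partial\Sigma_{D_{v_0}}$ is non-prime, with $D_{v_0}$ a non-split special positive (up to mirroring) diagram. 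This is the \emph{base case}: a non-split special positive diagram whose Seifert surface is reducible must contain an honest decomposition circle. Granting the base case, a decomposition circle $c$ of $D_{v_0}$ lies in a region of the plane that is also a region of $D$, meets $D$ transversely in two non-crossing points, and has crossings of $D$ on both sides (the crossings of $D_{v_0}$, which are a subset of those of $D$, are separated by $c$). Honesty of $c$ in $D$ then follows because the reducing sphere it defines for $L_{v_0}$ — obtained by doubling the product disk of the essential fixed arc it corresponds to — sits inside the $M_{v_0}=S^3$ piece of $M_{T(D)}=S^3$, and on each side it still bounds a non-trivial tangle of $L$ (one side an essential tangle of $L_{v_0}$, the other the remaining tangle of $L_{v_0}$ together with everything else, still essential). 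This yields the honest decomposition circle of $D$ claimed by \cref{thm:alternativevp}, and since alternating and positive diagrams are in particular alternative, it also yields \cref{cor:poslinksarevisprime}.

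For the base case I would argue directly with the structure of a positive special diagram $D'$: its Seifert surface $\Sigma_{D'}$ is the union of the disks of the non-separating Seifert circles joined by positive bands, one per crossing, and one can iterate Murasugi-summing $\Sigma_{D'}$ along \emph{separating arcs inside a single Seifert disk} (polygons in a Seifert disk separating the bands incident to it into two groups); each summand is again a positive special surface, hence strongly quasipositive and right-veering, and the summing regions are essential in the reduced case. Using the remark following \cref{rmk:phitopartialopenbook} (a fixed arc that is fixed on a cutting system of a surface is trivially fixed on the complementary disks) one reduces an essential fixed separating arc of $\Sigma_{D'}$ to the combinatorics of the Seifert graph of $D'$: if this graph is $2$-connected no such arc exists, and a cut vertex or cut edge is exactly what realizes an essential fixed arc, which in turn is an honest decomposition circle of $D'$ (a Seifert disk, resp.\ a band, that disconnects).

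The hard part will be precisely this base case — it is where Menasco's and Ozawa's theorems get reproved, and it is the one place where right-veeringness alone is insufficient, since a right-veering surface may still possess fixed arcs; the technical core is the bookkeeping needed to pass from the abstract reducing sphere (the double of a product disk) to an explicit circle in the diagram plane, controlling how that sphere meets the Seifert disks and bands and verifying that the resulting circle has crossings on both sides and is honest. Everything else in the argument is a formal consequence of \cref{thm:treeofveeringsurfaces} and \cref{prop:sqp=>rv} together with the fact that $\Sigma_D\cong\Sigma_{T(D)}$.
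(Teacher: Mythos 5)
Your overall strategy (tree $T(D)$, right-/left-veeringness of the vertex surfaces via \cref{prop:sqp=>rv}, fixed-arc analysis via \cref{thm:treeofveeringsurfaces}) matches the paper's, but two steps in your reduction are genuinely broken. First, the dichotomy ``either all $\Sigma_{D_v}$ are irreducible, whence $\Sigma_D$ is irreducible, or some vertex is reducible'' rests on the `in particular' part of \cref{thm:treeofveeringsurfaces}, which requires \emph{every} summing region to be essential. That hypothesis fails precisely in the most basic situation: a bigon summing region (a connected sum visible in the diagram) is never essential, since its interior side cuts off from the vertex surface a crossing-free piece of the Seifert disk, and reducedness of $D$ does not repair this. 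Concretely, the standard diagram of the granny knot is reduced, non-split and alternative, its link is not prime, yet both vertex surfaces are irreducible trefoil fibers; so your conclusion ``some $\Sigma_{D_{v_0}}$ is reducible'' is false, and your argument yields no circle exactly in the easiest case. The paper never argues through such a dichotomy: it keeps the global essential fixed arc $a$ of $\Sigma$, uses only the main part of \cref{thm:treeofveeringsurfaces} (restrictions are fixed but possibly boundary-parallel), and then localizes, via the boundary-connect-sum decomposition from \cref{lem:specialaltvp} together with \cref{prop:connectsumcrit} and \cref{rmk:phitopartialopenbook} (including the delicate case where the arc meets a summing region $P$, handled by the arc $b'\subset P$ fixed in both adjacent vertex surfaces), to a fixed arc of the \emph{whole} surface $\Sigma$ lying in a single Seifert disk. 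Relatedly, even when some vertex is reducible, your transfer step is unjustified: an honest decomposition circle of $D_{v_0}$ may run through the plane regions where $D$ still has the deleted sub-diagrams, so it need not even be a decomposition circle of $D$, and honesty for $L$ does not follow formally from honesty for $L_{v_0}$ without controlling how the reducing sphere interacts with the other Murasugi summands --- which is exactly what the paper's ``fixed in all of $\Sigma$, inside one Seifert disk'' conclusion is engineered to guarantee.

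Second, your ``base case'' --- visual primeness of special positive (equivalently special alternating) diagrams --- is not proved but essentially assumed. The assertion that a $2$-connected Seifert graph excludes an essential fixed separating arc \emph{is} the content of the lemma; decomposing along polygons inside Seifert disks only peels off connected sums and same-veering summands, and the cutting-system observation in \cref{rmk:phitopartialopenbook} does not deliver the $2$-connected case. The paper does not reprove this either: it imports it as \cref{lem:specialaltvp}, and avoids circularity with Menasco and Ozawa by invoking the independent algebraic proof via symmetrized Seifert forms. If you do not intend to supply a genuinely new proof of that base case, you should cite it in the same way; as written, the hardest step of your argument has no proof behind it.
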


Our proof of \Cref{thm:alternativevp} relies as base input on the visual primeness of special alternating links.
Since those are both alternating and positive (or negative),
this is a special case of both Menasco's and of Ozawa's theorem.
However, there is a purely algebraic proof of the result using symmetrized Seifert forms~\cite{zbMATH07931941}.
In particular, our reproof of \Cref{cor:poslinksarevisprime} avoids the geometric analysis of incompressible surfaces as done in \cite{Menasco_84} and in \cite{zbMATH01807859} and provides a unified approach. We state the base input as a lemma.

\begin{lemma}[Special alternating links are visually prime \cite{Menasco_84,zbMATH01807859,zbMATH07931941}]\label{lem:specialaltvp}
Let $D$ be a special alternating diagram for a link~$L$. If $L$ is not prime, then $D$ features an honest decomposition circle.
\end{lemma}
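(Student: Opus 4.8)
The plan is to follow the purely algebraic route of~\cite{zbMATH07931941}, turning primeness of $L$ into a $2$\nobreakdash-connectivity statement about the graph underlying the Seifert surface $\Sigma_D$ and then reading that statement off the diagram. First I would make two harmless reductions: the split case follows by applying the result to a split component, so we may assume $D$ is non-split; and then, being special alternating, $D$ is positive or negative, so after possibly mirroring (which preserves special-alternating-ness and honest decomposition circles) we may assume $D$ is special \emph{positive} alternating. For such $D$ the surface $\Sigma_D$ deformation retracts onto its Seifert graph $\Gamma$ (vertices the Seifert disks, edges the twisted bands), a connected planar graph, and a spanning tree of $\Gamma$ gives a basis of $H_1(\Sigma_D)$ by fundamental cycles. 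The structural input I would invoke is that, being a checkerboard surface of an alternating diagram, $\Sigma_D$ has $V+V^{T}$ equal (up to sign) to the Gordon--Litherland, i.e.\ Goeritz, form, which the alternating hypothesis makes \emph{definite}, and which the special hypothesis makes \emph{combinatorial}: with respect to the cycle basis, its entries record how many edges two fundamental cycles share. In particular $V+V^{T}$ is nondegenerate.

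Next I would bring in the connected sum. Assume $L$ is not prime, $L=L_1\#L_2$ with neither $L_i$ the unknot, and let $S$ be a decomposition sphere. A standard innermost-circle/outermost-arc argument, using that $\Sigma_D$ is incompressible and genus-minimizing for the special alternating link $L$ (this is the circle of ideas around \Cref{prop:connectsumcrit} and the remark following it), lets us isotope $S$ to meet $\Sigma_D$ in a single arc, exhibiting $\Sigma_D\cong\Sigma_1\,\natural\,\Sigma_2$ with each $\Sigma_i$ a genus-minimizing Seifert surface of $L_i$. Correspondingly $V+V^{T}$ splits as an orthogonal direct sum $Q_1\perp Q_2$ of symmetric forms on $H_1(\Sigma_1)$ and $H_1(\Sigma_2)$. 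Since each $L_i$ is nontrivial, $\Sigma_i$ is not a disk, so $H_1(\Sigma_i)\neq 0$; and since the restriction of a definite form to any summand is again definite, hence nonzero on a nonzero lattice, both $Q_1$ and $Q_2$ are nonzero.

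The remaining step, which I expect to be the main obstacle, is the translation of the orthogonal splitting $V+V^{T}=Q_1\perp Q_2$ back into $\Gamma$, and thence into $D$. The combinatorial fact to establish is that a nontrivial orthogonal decomposition of the definite cycle form of a connected planar graph $\Gamma$ forces $\Gamma$ to have a cut vertex, $\Gamma=\Gamma_1\vee\Gamma_2$ with each $\Gamma_i$ carrying at least one edge: the point is that the form records the edge-sharing incidence of fundamental cycles tightly enough that an abstract lattice splitting cannot be wilder than the block (biconnected-component) structure of $\Gamma$ --- this is exactly the irreducibility-of-the-Goeritz-form computation carried out in~\cite{zbMATH07931941}. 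Given such a cut vertex --- a Seifert circle $C$ whose adjacent bands fall into two nonempty groups lying in the two blocks --- a small circle in the projection plane that encircles one group of bands and crosses $D$ at the two non-crossing arcs of $C$ separating the groups is a decomposition circle; and capping off the Seifert disk of $C$ produces a sphere realizing $L=L_1\#L_2$ with both $L_i$ nontrivial (crossings occur in both $\Gamma_i$, and $D$ is reduced, hence has no nugatory crossings), so this circle is honest. An alternative, should the combinatorial step prove awkward to state cleanly, is to bypass the algebra and cite directly Menasco's incompressible-surface analysis for alternating links~\cite{Menasco_84} or Ozawa's for positive links~\cite{zbMATH01807859}, each of which produces the honest decomposition circle in the special alternating case at hand.
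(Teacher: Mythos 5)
The paper does not actually prove \cref{lem:specialaltvp}: it is imported as external input, justified by the observation that a special alternating diagram is simultaneously alternating and positive (or negative), so the statement is a special case of Menasco's theorem \cite{Menasco_84} and of Ozawa's theorem \cite{zbMATH01807859}, with the purely algebraic proof via symmetrized Seifert forms \cite{zbMATH07931941} mentioned as an alternative. Your fallback option of citing Menasco or Ozawa is therefore literally what the paper does, and your main sketch is a reconstruction of the cited algebraic route, so there is no divergence of approach to report. However, as an independent proof your sketch is incomplete: the step you yourself identify as the main obstacle --- that a nontrivial orthogonal splitting of the definite cycle/flow lattice of the Seifert graph forces a cut vertex (indecomposability over the blocks) --- is deferred to the very reference \cite{zbMATH07931941} being cited, so nothing new is established beyond the citation.

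If you wanted to make the algebraic route self-contained, a few loose ends would also need attention. Your honesty argument invokes reducedness of $D$ (``$D$ is reduced, hence has no nugatory crossings''), which is not among the hypotheses of the lemma; you would need a preliminary reduction in the spirit of \cref{lem:nonsplit}(ii), together with a check that a nugatory crossing can be untwisted within the class of special alternating diagrams. The isotopy of the decomposition sphere to meet $\Sigma_D$ in a single arc uses that $\Sigma_D$ is incompressible (equivalently genus-minimizing), which for (special) alternating diagrams is Crowell--Murasugi, quoted in the paper as \cite[Thm.~9.3]{zbMATH03854015}; this should be cited rather than waved at via \cref{prop:connectsumcrit}, which concerns monodromies of an already-given boundary connected sum. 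Finally, passing from a cut vertex of the Seifert graph to a circle meeting $D$ in only two points requires the (easy, but worth stating) planarity argument that the bands of the two blocks attach to the cut Seifert circle in non-interleaved position along it. None of these is a fatal flaw, but as written the proposal amounts to the same citation the paper makes, padded with a sketch whose decisive lattice-theoretic step is outsourced.
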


To summarize, our proof of \cref{thm:alternativevp} uses our understanding of fixed arcs in trees of incompressible veering surfaces, to reduce visual primeness of alternative links to the comparatively simple \cref{lem:specialaltvp},
which is a consequence of decomposing symmetrized Seifert forms.
As a last ingredient for the proof of \cref{thm:alternativevp}, we note that it suffices to restrict to link diagrams that are reduced and non-split.
\begin{lemma}\label{lem:nonsplit}
\begin{enumerate}[label=(\roman*)]
\item Let $D$ be a diagram of a non-prime link.
Then there is a split component of $D$ that is likewise a diagram of a non-prime link.
\item Let $D'$ be obtained from $D$ by removing a nugatory crossing by a twist. If $D'$ has an honest decomposition circle, then so does~$D$.
\end{enumerate}
\end{lemma}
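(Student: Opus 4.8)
The plan is to prove the two parts separately, in each case reducing the assertion to a ``localization'' of a decomposition sphere (respectively a decomposition circle) away from the part of the picture that is irrelevant. For (i), if $D$ is non-split we take the split component to be $D$ itself, so assume $D$ has split components $D_1,\dots,D_k$ with $k\geq 2$ and write $L=L_1\sqcup\cdots\sqcup L_k$ with $L_i=[D_i]$. Since $L$ is not prime, fix a decomposition sphere $S$ of $L$. As each component of $L$ is null-homologous in $S^3$ and $S$ separates $S^3$, each component of $L$ meets $S$ in an even number of points; since $|S\cap L|=2$, both points lie on a single component, hence inside a single split component $L_j$. Now use splitness: there is a $3$-ball disjoint from $L_j$ that contains all the other $L_i$, and since $S^3\setminus L_j$ is connected we may isotope this ball into the interior of one of the two balls $B_+,B_-$ cobounded by $S$, keeping it disjoint from $S$ and from $L_j$. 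After this isotopy the subarc of $L$ joining the two points of $S\cap L$ on either side of $S$ is precisely $L_j\cap B_\pm$, and $S$ is a decomposition sphere of $L_j$: the relevant subarcs are not isotopic into $S$ as part of $L$, and — using splitness once more, to transport isotopies of $L_j$ in $S^3$ to isotopies of $L$ fixing the ball that holds the remaining components, and back — the same holds for $L_j$ alone. Hence $L_j$ is not prime and $D_j$ is the desired split component.

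For (ii), let $\gamma$ be the simple closed curve in the plane realizing the removed crossing $c$ as nugatory, and let $D_1,D_2$ be the two parts into which $\gamma$ separates $D$; since $c$ is removable, $[D]=[D']=[D_1]\# [D_2]$. If both $[D_1]$ and $[D_2]$ are non-trivial, then a pushoff $\gamma^+$ of $\gamma$, perturbed off $c$, is already an honest decomposition circle of $D$: it meets $D$ transversely in two non-crossing points, both of its sides carry crossings, and the associated sphere realizes the non-trivial connected sum $[D_1]\# [D_2]$; so in that case we are done without even invoking $C'$. Otherwise one part, say $D_2$, lies in a sub-disk $\Delta$ of the plane with $[D_2]$ trivial, whence $[D']=[D_1]$. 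The plan is then to isotope the given honest decomposition circle $C'$ of $D'$ until it is disjoint from $\Delta$, while keeping it an honest decomposition circle. Since $D$ and $D'$ coincide outside $\Delta\cup\{c\}$, the resulting circle, read in the plane of $D$, then meets $D$ in the same two non-crossing points, has crossings on both sides (the crossing $c$ lying on one fixed side), and induces the same decomposition sphere of the unchanged link $[D]=[D']$; hence it is an honest decomposition circle of $D$.

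I expect the technical core of both parts — making these localizations precise — to be the main obstacle, although both statements are essentially folklore. In (i) one must justify that an ambient isotopy of $L_j$ in $S^3$ can be taken to fix the small ball containing the remaining split components; this is a standard consequence of splitness but needs a careful argument. In (ii) one must isotope $C'$ out of $\Delta$ without creating new intersections with $D'$ and without pushing $C'$ across a crossing; the plan here is an innermost-arc reduction of the number of points of $C'\cap\partial\Delta$, using that $D'$ is non-split (so every arc of $D'\cap\Delta$ has both endpoints on $\partial\Delta$) and that $[D_2]$ is trivial (so the connected-sum point cut out by $C'$ can be slid past $\Delta$). The genuinely delicate case is when $C'$ and $\partial\Delta$ ``link'' essentially, where one must replace $C'$ by a different honest decomposition circle rather than merely isotope it; an alternative route that sidesteps this is to first push $C'$ to an honest decomposition circle of the simpler diagram $D_1$ and then reinsert the trivial tangle $D_2\cup\{c\}$.
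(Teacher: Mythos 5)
Part (i) of your proposal is correct and is in substance the paper's argument with the details filled in: the paper simply invokes the folklore fact that a split union of links is prime if and only if each piece is, and your localization of the decomposition sphere (both intersection points lie on a single split component by the parity argument, push the remaining components off the sphere, and transport isotopies of $L_j$ to isotopies of $L$ by ambient extension) is exactly the proof of the direction of that fact which (i) needs.

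Part (ii) is where the proposal has a genuine gap. The entire content of the statement is the conversion of the given honest decomposition circle $C'$ of $D'$ into one for $D$, and your plan for this---isotope $C'$ until it is disjoint from the disk $\Delta$ containing the ``trivial'' side of the nugatory crossing---is not carried out and, as you yourself flag, cannot in general be achieved by an isotopy: $C'$ may have one of its two intersection points with $D'$ inside $\Delta$, or may otherwise cut essentially through the twisted region, and then it must be \emph{replaced} by a different honest decomposition circle; that replacement is precisely the step you leave open (the ``alternative route'' of first descending to $D_1$ and then reinserting the trivial tangle is named but not argued either). The preliminary case split on whether both $[D_1]$ and $[D_2]$ are non-trivial does not remove this difficulty, and in its first case the honesty of the pushoff of $\gamma$ (that neither subarc can be isotoped into the resulting sphere) is asserted rather than checked. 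The paper's proof is much lighter because of a trick you are missing: one never tries to push $C'$ out of the disk, only to arrange $C'$ disjoint from (or equal to) the nugatory circle $C=\partial\Delta$ itself. If $C'$ lies outside $C$, it is verbatim an honest decomposition circle of $D$, since $D$ and $D'$ agree there; if $C'$ lies inside $C$, one applies the twist---the homeomorphism of the plane supported in $\Delta$ carrying $D'$ back to $D$---to $C'$, which yields a decomposition circle of $D$, and honesty is preserved because $D$ and $D'$ present the same link. With this transport-by-the-twist no case distinction between trivial and non-trivial sides is needed, and the delicate configuration that blocks your isotopy plan largely disappears; your write-up, by contrast, does not resolve it.
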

\begin{proof}
A split union of links is prime if and only if all of the individual links are. This immediately implies~(i).

Let us prove (ii). By definition, there is a circle $C$ that intersects $D$ in the nugatory crossing, and nowhere else. Let $D'$ be obtained from $D$ by twisting the inside of $C$.
Let $C'$ be an honest decomposition circle of $D'$.
If $C' = C$, then a small isotopy making $C$ transverse to $D$ produces an honest decomposition circle of $D$.
Otherwise, $C'$ can be made disjoint from $C$. If $C'$ is on the outside of $C$, then $C'$ is an honest decomposition circle for $D$. If $C'$ is on the inside of $C$, then twisting it yields an honest decomposition circle for~$D$.
\end{proof}

    \begin{figure}[t]
    \centering
    \includegraphics[width=0.8\linewidth]{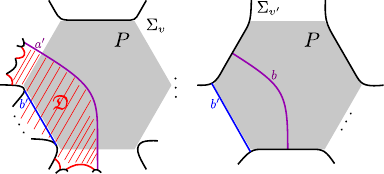}
    \caption{On the left, we have that the arc $b'$ is fixed in $\Sigma_v$ because it is contained in $\mathfrak{D}$. On the right, $b'$ is obtained from $b$ by an isotopy on the boundary, so it is also fixed in $\Sigma_{v'}$. Note that $b'$ could be boundary parallel in $\Sigma_v$, but not in $\Sigma_{v'}$ as there must be---at least---one side of $P$ between its endpoints.}
\label{fig:fixed_in_P}
    \end{figure}
\begin{proof} [Proof of \Cref{thm:alternativevp}]
By \cref{lem:nonsplit}, it suffices to prove the statement
for a non-split reduced alternative diagram~$D$.
 Let $T = T(D)$ be the corresponding tree of incompressible surfaces, see \cref{def:td}.
Let $L$ be the link given by $D$, that is the link bounding the Seifert surface $\Sigma\coloneqq \Sigma_D = \Sigma_{T} \subset M_T=S^3$.
Assume that $L$ is not prime; in other words, $\Sigma$ has an essential (isotopy class) of a fixed arc $a\in \mathcal{A}(\Sigma)$.
Our proof strategy is to construct from $a$ another essential fixed arc $c$ that is completely contained in a single Seifert disk.
After an isotopy, $c$ projects to an arc in $\mathbb{R}^2$ with interior disjoint from~$D$. Since $c$ separates, this arc in the plane may be completed to an honest decomposition circle as desired.

Note that all the surfaces $\Sigma_v$ at vertices $v$ of $T$ are strongly quasipositive or strongly quasinegative, so we can apply \Cref{prop:sqp=>rv} to conclude that every vertex of $T$ is either right- or left-veering. Then, by \Cref{thm:treeofveeringsurfaces}, all the intersections of $a$ with the surfaces $\Sigma_v$ are (isotopy classes of) fixed arcs.

There exists a vertex $v$ such that $a \cap \Sigma_v$ has an essential component~$a'$. Then, $a'$ is fixed in $\Sigma_v$.
Now, we have that $\Sigma_v$ is a surface coming from a special alternating diagram. By \Cref{lem:specialaltvp}, it decomposes as a boundary connected sum of irreducible non-disk surfaces %
along arcs $\{s_j\}_{j=1}^{n-1}$ contained in Seifert disks. Then, by \Cref{prop:connectsumcrit}, $a'$ cuts out a disk $\mathfrak{D}$ together with a subcollection of the~$s_j$. Since $a'$ is essential, the subcollection is not empty.

Consider the case that $a'$ does not intersect any summing region. Then any of the $s_j$ cutting out $\mathfrak{D}$ with $a'$ is in a Seifert disk that does not contain a summing region. It follows that $c = s_j$ is fixed, thus successfully executing our proof strategy.
We are left with the case 
that $a'$ intersects a summing region $P$ with a surface $\Sigma_{v'}$.
Let $b$ be a component of this intersection.
    By assumption, we can choose an arc $b'$ contained in $\mathfrak{D}$ with endpoints vertices of the sides of $P$ that $b$ intersects, see the left hand side of \Cref{fig:fixed_in_P}. Since $b' \subset \mathfrak{D}$, the discussion in \Cref{rmk:phitopartialopenbook} implies that $b'$ is fixed in $\Sigma_v$. Now, observe that, in $\Sigma_{v'}$, $b $ is a component of $a' \cap \Sigma_{v'}$. But this means it is also a component of $a \cap \Sigma_{v'}$, so by \Cref{thm:treeofveeringsurfaces} $b$ is fixed in $\Sigma_{v'}$. But $b'$ is obtained from $b$ by an free isotopy along the boundary, so $b'$ is also fixed in $\Sigma_{v'}$, see the right hand side of \Cref{fig:fixed_in_P}. Thus, $b'$ is an arc contained in $P$ that is fixed in both $\Sigma_v$ and $\Sigma_{v'}$. Thus $b'$ is fixed in~$\Sigma$.
Moreover, $P$ is contained in a Seifert disk, and thus so is~$c = b'$.
Once more, we have executed our proof strategy.
\end{proof}

\bibliographystyle{myamsalpha} 
\bibliography{main}

\providecommand{\bysame}{\leavevmode\hbox to3em{\hrulefill}\thinspace}
\providecommand{\MR}{\relax\ifhmode\unskip\space\fi MR }
\providecommand{\MRhref}[2]{%
  \href{http://www.ams.org/mathscinet-getitem?mr=#1}{#2}
}
\providecommand{\href}[2]{#2}
\begin{thebibliography}{HKM09}

\bibitem[Baa05]{MR2165205}
S.~Baader: \emph{Hopf plumbing and minimal diagrams}, Comment. Math. Helv. \textbf{80} (2005), no.~3, 631--642. \xox{ZB}{1076.57005}.

\bibitem[Bal25]{baldwinblog}
J.~Baldwin: \emph{Surfaces in 3-manifolds up to positive {H}opf plumbing}, 2025,  Blog post available at \url{https://floerhomologyproblems.blogspot.com/2025/05/surfaces-in-3-manifolds-up-to-positive.html?m=1}.

\bibitem[BO01]{zbMATH01643188}
M.~Boileau and S.~Orevkov: \emph{Quasipositivity of an analytic curve in a pseudoconvex 4-ball}, C. R. Acad. Sci., Paris, S{\'e}r. I, Math. \textbf{332} (2001), no.~9, 825--830. \xox{ZB}{1020.32020}.

\bibitem[BS16]{bs}
F.~Bonahon and L.~C. Siebenmann: \emph{New geometric splittings of classical knots and the classification and symmetries of arborescent knots}, 2016,  available at \url{https://dornsife.usc.edu/francis-bonahon/preprints-available/}.

\bibitem[BG24]{zbMATH07931941}
J.~Boninger and J.~E. Greene: \emph{Special alternating knots are band prime}, Int. Math. Res. Not. \textbf{2024} (2024), no.~10, 8758--8763. \xox{ZB}{1554.57004}.

\bibitem[CG83]{Casson-Gordon}
A.~J. Casson and C.~M. Gordon: \emph{A loop theorem for duality spaces and fibred ribbon knots}, Invent. Math. \textbf{74} (1983), 119--137. \xox{ZB}{0538.57003}.

\bibitem[Cro89]{zbMATH04122845}
P.~R. Cromwell: \emph{Homogeneous links}, J. Lond. Math. Soc., II. Ser. \textbf{39} (1989), no.~3, 535--552. \xox{ZB}{0685.57004}.

\bibitem[Cro93]{Cromwell_93}
\bysame: \emph{Positive braids are visually prime}, Proc. Lond. Math. Soc. (3) \textbf{67} (1993), no.~2, 384--424. \xox{ZB}{0818.57004}.

\bibitem[Eli89]{Eliashberg}
Y.~Eliashberg: \emph{Classification of overtwisted contact structures on 3-manifolds}, Invent. Math. \textbf{98} (1989), no.~3, 623--637. \xox{ZB}{0684.57012}.

\bibitem[EO09]{EtguOzbaugci}
T.~Etg\"u and B.~\"Ozba\u{g}c\i: \emph{Partial open book decompositions and the contact class in sutured {F}loer homology}, Turkish J. Math. \textbf{33} (2009), no.~3, 295--312. \xox{ZB}{1177.57023}.

\bibitem[FLO24]{FLOR}
P.~Feller, L.~Lewark, and M.~{Orbegozo Rodriguez}: \emph{Homogeneous braids are visually prime}, 2024,  accepted for publication by J. Topol. \xox{arXiv}{2408.15730}.

\bibitem[FG09]{Futer_2008}
D.~Futer and F.~Gu\'eritaud: \emph{Angled decompositions of arborescent link complements}, Proc. Lond. Math. Soc. (3) \textbf{98} (2009), no.~2, 325--364. \xox{ZB}{1163.57003}.

\bibitem[Gab83a]{Gabai_FoliationsI}
D.~Gabai: \emph{Foliations and the topology of {$3$}-manifolds}, J. Differential Geom. \textbf{18} (1983), no.~3, 445--503. \xox{ZB}{0533.57013}.

\bibitem[Gab83b]{Gabai}
\bysame: \emph{The {M}urasugi sum is a natural geometric operation}, Low-dimensional topology ({S}an {F}rancisco, {C}alif., 1981), Contemp. Math., vol.~20, Amer. Math. Soc., Providence, RI, 1983, pp.~131--143. \xox{ZB}{0524.57004}.

\bibitem[Gab85]{Gabaiother}
\bysame: \emph{The {M}urasugi sum is a natural geometric operation. {II}}, Combinatorial methods in topology and algebraic geometry ({R}ochester, {N}.{Y}., 1982), Contemp. Math., vol.~44, Amer. Math. Soc., Providence, RI, 1985, pp.~93--100. \xox{ZB}{0584.57003}.

\bibitem[Gab87]{Gabai_FoliationsII}
\bysame: \emph{Foliations and the topology of {$3$}-manifolds. {II}}, J. Differential Geom. \textbf{26} (1987), no.~3, 461--478. \xox{ZB}{0627.57012}.

\bibitem[Gir02]{zbMATH01789982}
E.~Giroux: \emph{Contact geometry: {From} dimension three to higher dimensions}, Proceedings of the international congress of mathematicians, ICM 2002, Beijing, China, August 20--28, 2002. Vol. II: Invited lectures, Beijing: Higher Education Press; Singapore: World Scientific/distributor, 2002, pp.~405--414. \xox{ZB}{1015.53049}.

\bibitem[Goo05]{Goodman}
N.~Goodman: \emph{Overtwisted open books from sobering arcs}, Algebr. Geom. Topol. \textbf{5} (2005), 1173--1195. \xox{ZB}{1090.57020}.

\bibitem[Hed10]{Hedden}
M.~Hedden: \emph{Notions of positivity and the {O}zsv\'ath-{S}zab\'o{} concordance invariant}, J. Knot Theory Ramifications \textbf{19} (2010), no.~5, 617--629. \xox{ZB}{1195.57029}.

\bibitem[HKM07]{HKM}
K.~Honda, W.~H. Kazez, and G.~Mati\'c: \emph{Right-veering diffeomorphisms of compact surfaces with boundary}, Invent. Math. \textbf{169} (2007), no.~2, 427--449. \xox{ZB}{1167.57008}.

\bibitem[HKM09]{HKMPartial}
\bysame: \emph{The contact invariant in sutured {F}loer homology}, Invent. Math. \textbf{176} (2009), no.~3, 637--676. \xox{ZB}{1171.57031}.

\bibitem[Ito22]{zbMATH07504315}
T.~Ito: \emph{A note on {HOMFLY} polynomial of positive braid links}, Int. J. Math. \textbf{33} (2022), no.~4, 18. \xox{ZB}{1487.57006},  Id/No 2250031.

\bibitem[Kau83]{zbMATH03854015}
L.~H. Kauffman: \emph{Formal knot theory}, Math. Notes (Princeton), vol.~30, Princeton University Press, Princeton, NJ, 1983. \xox{ZB}{0537.57002}.

\bibitem[Men84]{Menasco_84}
W.~W. Menasco: \emph{Closed incompressible surfaces in alternating knot and link complements}, Topology \textbf{23} (1984), no.~1, 37--44. \xox{ZB}{0525.57003}.

\bibitem[Oza02]{zbMATH01807859}
M.~Ozawa: \emph{Closed incompressible surfaces in the complements of positive knots}, Comment. Math. Helv. \textbf{77} (2002), no.~2, 235--243. \xox{ZB}{1035.57003}.

\bibitem[Rud83a]{zbMATH03796828}
L.~Rudolph: \emph{Algebraic functions and closed braids}, Topology \textbf{22} (1983), 191--202. \xox{ZB}{0505.57003}.

\bibitem[Rud83b]{zbMATH03826789}
\bysame: \emph{Braided surfaces and {Seifert} ribbons for closed braids}, Comment. Math. Helv. \textbf{58} (1983), 1--37. \xox{ZB}{0522.57017}.

\bibitem[Rud92a]{Rudolph_CharacterizationofQPSSConstructions3}
\bysame: \emph{Constructions of quasipositive knots and links. {III}: {A} characterization of quasipositive {Seifert} surfaces}, Topology \textbf{31} (1992), no.~2, 231--237. \xox{ZB}{0763.57008}.

\bibitem[Rud92b]{zbMATH00165700}
\bysame: \emph{Constructions of quasipositive knots and links. {IV}: {Quasipositive} annuli}, J. Knot Theory Ramifications \textbf{1} (1992), no.~4, 451--466. \xox{ZB}{0773.57006}.

\bibitem[Rud99]{Rudolph}
\bysame: \emph{Positive links are strongly quasipositive}, Proceedings of the {K}irbyfest ({B}erkeley, {CA}, 1998), Geom. Topol. Monogr., vol.~2, Geom. Topol. Publ., Coventry, 1999, pp.~555--562. \xox{ZB}{0962.57004}.

\bibitem[Sei35]{Seifert_35}
H.~Seifert: \emph{{\"U}ber das {G}eschlecht von {K}noten}, Math. Ann. \textbf{110} (1935), no.~1, 571--592. \xox{ZB}{0010.13303}.

\bibitem[Sta78]{Stallings}
J.~R. Stallings: \emph{Constructions of fibred knots and links}, Algebraic and geometric topology ({P}roc. {S}ympos. {P}ure {M}ath., {S}tanford {U}niv., {S}tanford, {C}alif., 1976), {P}art 2, Proc. Sympos. Pure Math., vol. XXXII, Amer. Math. Soc., Providence, RI, 1978, pp.~55--60. \xox{ZB}{0394.57007}.

\bibitem[Thu98]{Thurston}
W.~P. Thurston: \emph{Hyperbolic {Structures} on 3-manifolds, {II}: {Surface} groups and 3-manifolds which fiber over the circle}, 1998. \xox{arXiv}{math/9801045}.

\bibitem[Wan15]{Wand}
A.~Wand: \emph{Detecting tightness via open book decompositions}, Interactions between low-dimensional topology and mapping class groups, Geom. Topol. Monogr., vol.~19, Geom. Topol. Publ., Coventry, 2015, pp.~291--317. \xox{ZB}{1335.53104}.

\end{thebibliography}
\end{document}